\newcommand{\bigzero}{\mbox{\normalfont\Large 0}}
\newcommand{\rvline}{\hspace*{-\arraycolsep}\vline\hspace*{-\arraycolsep}}
\DeclareMathOperator{\arcsinh}{arcsinh}
\DeclareMathOperator{\Hom}{Hom}
\DeclareMathOperator{\cO}{\mathcal{O}}
\DeclareMathOperator{\quot}{\mathsf{Quot}}
\DeclareMathOperator{\fix}{\mathrm{F}}
\newcommand{\ch}{\textnormal{ch}}
\newcommand{\td}{\textnormal{Td}}
\newcommand{\comment}[1]{}
\newtheorem{theorem}{Theorem}
\newtheorem {lemma}[theorem]{Lemma}
\newtheorem{question}[theorem]{Question}
\newtheorem {corollary}[theorem]{Corollary}
\theoremstyle{definition}
\newtheorem {example}[theorem]{Example} 
\theoremstyle {definition}
\begin{document}

\baselineskip=16.75pt
\title[Euler characteristics over Quot schemes]{Euler characteristics of tautological bundles over Quot schemes of curves}
\author{Dragos Oprea}
\address{Department of Mathematics, University of California, San Diego}
\email {doprea@math.ucsd.edu}
\author{Shubham Sinha}
\address{Department of Mathematics, University of California, San Diego}
\email {shs074@ucsd.edu}

\begin{abstract} We compute the Euler characteristics of tautological vector bundles and their exterior powers over the Quot schemes of curves. We give closed-form expressions over punctual Quot schemes in all genera. For higher rank quotients of a trivial vector bundle, we obtain answers in genus $0$. We also study the Euler characteristics of the symmetric powers of the tautological bundles, for rank $0$ quotients. 
\end{abstract}
\maketitle
\section{Introduction}
In this paper, we prove several closed-form expressions for the holomorphic Euler characteristics of tautological vector bundles and their exterior and symmetric powers over the Quot schemes of curves. 

\subsection{Punctual Quot schemes} To set the stage, let $E\to C$ be a locally free sheaf over a smooth projective curve $C$. Let $\quot_d(E)$ denote the Quot scheme parameterizing rank $0$ degree $d$ quotients of $E$: $$0\to S\to E\to Q\to 0,\quad \text{rank }Q=0, \quad \text{deg }Q=d.$$ It is easy to see that $\quot_d(E)$ is smooth of dimension $Nd$ where $N=\text{rank }E.$ 

We write $$0\to \mathcal {S}  \to p^*E \to \mathcal Q\to 0$$ for the universal exact sequence over $C\times \quot_d(E),$ and we let $p$ and $\pi$ denote the two projections over the factors of $C\times \quot_d(E)$. For any line bundle $L\to C$, there is an induced tautological rank $d$ vector bundle over $\quot_d(E)$ given by \[L^{[d]}=\pi_*(p^* L\otimes \mathcal Q). \] 
	
	We first study the holomorphic Euler characteristics of all exterior powers $\wedge^k L^{[d]}$. To this end, for any vector bundle $V$ over a scheme $Y$, we set $$\wedge_y V:=\sum_k y^k \wedge^kV.$$ We show 
	\begin{theorem}\label{t1}
		Let $E\to C$ be a vector bundle over a smooth projective curve,  and let $L\to C$ be a line bundle. Then 
		$$\sum_{d=0}^{\infty} q^d\chi(\quot_d(E), \wedge_yL^{[d]})={(1-q)^{-\chi(\cO_C)}}{(1+qy)^{\chi(E\otimes L)}}.$$	\end{theorem}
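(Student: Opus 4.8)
The plan is to reduce to the rank-one case, where the Quot scheme is a symmetric product, and to pass from rank one to higher rank by torus localization on a split bundle.

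I would begin by reducing to a split bundle. The right-hand side depends on $(C,E,L)$ only through $\chi(\cO_C)=1-g$ and $\chi(E\otimes L)=\deg E+N\deg L+N(1-g)$, the latter by Riemann--Roch. At a quotient $[0\to S\to E\to Q\to 0]$ the tangent space is $\Hom(S,Q)$, while the obstruction space $\mathrm{Ext}^1(S,Q)=H^1(S^\vee\otimes Q)$ vanishes because $S^\vee\otimes Q$ is torsion; hence $\quot_d(E)$ is smooth of dimension $\chi(S^\vee\otimes Q)=Nd$, which both confirms the stated smoothness and shows that the relative Quot scheme over a connected family of bundles is smooth and proper of constant relative dimension. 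As vector bundles of fixed rank and degree form a connected family, $\chi(\quot_d(E),\wedge_y L^{[d]})$ is deformation invariant, so I may assume $E=L_1\oplus\cdots\oplus L_N$ with $\sum_i\deg L_i=\deg E$.

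Next I would settle the rank-one case $E=L_0$. Here every rank-$0$ quotient of the line bundle $L_0$ is its restriction to an effective divisor, so $\quot_d(L_0)\cong C_d:=\mathrm{Sym}^d C$, and $L^{[d]}$ is the tautological bundle of $M:=L\otimes L_0$ attached to the universal divisor on $C\times C_d$. Using Macdonald's description of the Chern classes of $M^{[d]}$ and of the cohomology ring of $C_d$, the Hirzebruch--Riemann--Roch integrals $\chi(C_d,\wedge_y M^{[d]})$ can be summed into the closed form \[\sum_{d=0}^\infty q^d\,\chi(C_d,\wedge_y M^{[d]})=(1-q)^{-\chi(\cO_C)}(1+qy)^{\chi(M)},\] which is the desired identity since $\chi(M)=\chi(E\otimes L)$; the specialization $y=0$ recovers the classical $\sum_d q^d\chi(\cO_{C_d})=(1-q)^{g-1}$.

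For the split bundle I would introduce the $(\mathbb{C}^*)^N$-action scaling the summands $L_i$ and apply holomorphic (Atiyah--Bott) localization. A fixed quotient splits as $\bigoplus_i(L_i\to Q_i)$, so the fixed locus is $\bigsqcup_{d_1+\cdots+d_N=d}\prod_i C_{d_i}$, and $L^{[d]}$ restricts to $\bigoplus_i(L\otimes L_i)^{[d_i]}$, each summand in a distinct weight. Writing $S=\bigoplus_i S_i$, the tangent space $\Hom(S,Q)=\bigoplus_{i,j}\Hom(S_i,Q_j)$ splits into a diagonal part $\bigoplus_i\Hom(S_i,Q_i)$, tangent to $\prod_i C_{d_i}$, and a normal part $\bigoplus_{i\neq j}\Hom(S_i,Q_j)$ of weight $t_j t_i^{-1}$. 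Localization then expresses the (equivariant) Euler characteristic as $\sum_{\text{fixed}}\chi\bigl(\prod_i C_{d_i},\ \wedge_y L^{[d]}\big/\wedge_{-1}N^\vee\bigr)$, and the honest, $t$-independent total is the sought number. The main obstacle is precisely the assembly of these normal contributions: a naive factorization into a product of rank-one series would give $(1-q)^{-N\chi(\cO_C)}(1+qy)^{\chi(E\otimes L)}$, whose $(1+qy)$-exponent is already correct but whose $(1-q)$-exponent is off by a factor of $N$. Thus the crux is to evaluate the Euler characteristics of the off-diagonal Hom-bundles $\Hom(S_i,Q_j)$ over products of symmetric products, to carry out the multi-variable sum over $(d_1,\dots,d_N)$, and to verify that the normal-bundle denominators collapse the spurious $(1-q)^{-N\chi(\cO_C)}$ down to $(1-q)^{-\chi(\cO_C)}$ while leaving $(1+qy)^{\chi(E\otimes L)}$ intact.
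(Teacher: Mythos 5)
Your reductions are fine as far as they go: smoothness of the relative Quot scheme over a family of bundles (the obstruction space $\mathrm{Ext}^1(S,Q)=H^1(S^\vee\otimes Q)$ vanishes since $S^\vee\otimes Q$ is torsion), irreducibility of the moduli of bundles of fixed rank and degree, and properness plus flatness give deformation invariance of $\chi$, so you may indeed assume $E$ splits; and the rank-one identity over $\mathrm{Sym}^d C$ that you quote is available in the literature (it is the case covered by \cite{K2}, and it can be extracted from Macdonald's formulas). The genuine gap is that your argument stops exactly where the theorem's difficulty begins. Everything hinges on evaluating the multi-variable localization sum---the off-diagonal contributions $\Hom(S_i,Q_j)$ over $\prod_i C_{d_i}$, summed over all $(d_1,\dots,d_N)$---and you explicitly defer this (``the crux is to evaluate \dots and to verify that the normal-bundle denominators collapse the spurious factor''), offering no mechanism for doing so. That evaluation is the actual content of the paper's proof: the fixed-point contributions are summed using the multivariable Lagrange--B\"urmann inversion formula \eqref{formlb}, the result is recast as a quotient of determinants as in \eqref{de2}, recognized as a sum of Schur polynomials $s_{\lambda_k}$ evaluated at the roots of $z(z-1)^N-q(z+y)=0$, and the $q^d$-coefficient is extracted via the Jacobi--Trudi identity (Lemma \ref{l4}). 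Without this step, or some substitute for it, you have a correct setup reduced to an unsolved combinatorial problem, not a proof.

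Two further warnings if you try to complete the argument along your lines. First, you localize in arbitrary genus, where the fixed loci are products of symmetric products and the integrands involve the theta class; the paper deliberately avoids this by localizing only over $\mathbb{P}^1$ (fixed loci are products of projective spaces, so all integrals are coefficient extractions) and then promoting the genus-zero answer to all genera by the universality argument of Section \ref{arb}. Your deformation-invariance step does not remove this extra difficulty: split bundles exist in every genus, but their localization integrals remain genuinely higher-genus integrals. Second, the fixed-point formula you wrote, with $\wedge_{-1}N^\vee$ in the denominator, is $K$-theoretic (holomorphic Lefschetz) localization; the authors note in a footnote that they were unable to establish the result in this fashion, and indeed the individual fixed-locus terms acquire poles at the non-equivariant specialization of the weights, so even the formal manipulation of your sum requires care. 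The paper instead applies Hirzebruch--Riemann--Roch first and then localizes in equivariant cohomology.
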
	
		
\noindent The same methods will establish a slightly stronger result:
	\begin{theorem}\label{t2}
		For any line bundles $M_1,M_2,\dots, M_r$ and $L$ over $C$, where $0\le r\le \text{rk } E-1$, we have 
		\begin{equation*}
			\sum_{d=0}^{\infty} q^d\chi\left(\quot_d(E), \wedge_yL^{[d]}\otimes_{i=1}^{r}\big(\wedge_{x_i} M_i^{[d]} \big)^{\vee}\right)=(1-q)^{-\chi(\cO_C)} (1+qy)^{\chi(E\otimes L)} \prod_{i=1}^{r}(1-qx_iy)^{-\chi(M_i^{\vee}\otimes L)}.
		\end{equation*}
	\end{theorem}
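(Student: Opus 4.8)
The plan is to reduce to a split bundle, localize on a torus action, and assemble the answer from a symmetric-product building block.

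\emph{Reduction to split $E$.} Since $Q$ is torsion on the curve, $\mathrm{Ext}^1(S,Q)=H^1(C,\mathcal{H}om(S,Q))=0$ (because $S$ is locally free and $\mathcal{H}om(S,Q)=S^\vee\otimes Q$ is supported on points), so $\quot_d(E)$ is smooth of dimension $Nd$ and, for a family of bundles over a connected base, the relative Quot scheme is smooth, hence flat. Holomorphic Euler characteristics are constant in flat families, and bundles of fixed rank $N$ and degree $\deg E$ form an irreducible family, so I may join $E$ to a split bundle $E=\bigoplus_{i=1}^N A_i$ and assume the latter; this is legitimate because both sides depend on $E$ only through $\chi(E\otimes L)=\sum_i\chi(A_i\otimes L)$.

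\emph{The symmetric-product building block.} When $N=1$ one has $\quot_d(B')\cong \mathrm{Sym}^d C$ for a line bundle $B'$, the universal quotient is $p^{\ast}B'\otimes \cO_{\mathcal D}$ for the universal divisor $\mathcal D$, and $(B')^{[d]}$ is the tautological bundle of $B'$ on $\mathrm{Sym}^d C$. I would first establish, by Hirzebruch--Riemann--Roch on $\mathrm{Sym}^d C$ together with the plethystic power structure on $\bigoplus_d K(\mathrm{Sym}^d C)$, the master identity that for line bundles $B,B_1,\dots,B_s$ on $C$,
\[ \sum_{d} q^d\chi\Big(\mathrm{Sym}^d C,\ \wedge_y B^{[d]}\otimes \prod_{j}(\wedge_{x_j}B_j^{[d]})^{\vee}\Big)=(1-q)^{-\chi(\cO_C)}(1+qy)^{\chi(B)}\prod_j(1-qx_jy)^{-\chi(B_j^{\vee}\otimes B)}. \]
The exponents are Euler characteristics precisely because the generating series is the plethystic exponential of the $K$-theory class of $C$, in which each tautological insertion contributes a single factor.

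\emph{Localization.} The torus $T=(\mathbb{C}^{\ast})^N$ scaling the summands of $E$ acts on $\quot_d(E)$ with fixed locus $\bigsqcup_{d_1+\cdots+d_N=d}\prod_i \mathrm{Sym}^{d_i}C$, using $\quot_{d_i}(A_i)\cong\mathrm{Sym}^{d_i}C$. On such a component $\mathcal Q=\bigoplus_i \mathcal Q_i\, t_i$ with $\mathcal Q_i=p^{\ast}A_i\otimes\cO_{\mathcal D_i}$, so $L^{[d]}$ restricts over the $i$-th factor to the $\mathrm{Sym}$-tautological bundle of $L\otimes A_i$ (weight $t_i$), and $M_j^{[d]}$ likewise to that of $M_j\otimes A_i$; since $(M_j\otimes A_i)^{\vee}\otimes(L\otimes A_i)=M_j^{\vee}\otimes L$, applying the building block on each factor reproduces exactly the factors on the right-hand side. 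Atiyah--Bott localization in equivariant $K$-theory then writes the equivariant Euler characteristic as the sum over fixed components of $\chi$ of the restricted integrand divided by $\wedge_{-1}N^{\vee}$, where $N=\bigoplus_{i\ne j}\pi_*\mathcal{H}om(\mathcal S_i,\mathcal Q_j)$ is the moving part of $\pi_*\mathcal{H}om(\mathcal S,\mathcal Q)$ with weights $t_jt_i^{-1}$, and the ordinary $\chi$ is the limit $t_i\to 1$.

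\emph{Main obstacle.} The difficulty is that the normal bundle couples distinct factors through $\pi_*\mathcal{H}om(\mathcal S_i,\mathcal Q_j)$, so the localization integrand is not a product over the symmetric-product factors and carries poles along $t_i=t_j$. The heart of the argument is to evaluate these coupled symmetric-product integrals, to show the poles cancel as $t_i\to 1$, and to see that the off-diagonal terms reorganize so that the naive $N$-fold product of $(1-q)^{-\chi(\cO_C)}$ collapses to a single such factor while the insertion factors survive. The hypothesis $r\le N-1$ is exactly what keeps these off-diagonal contributions polynomial, so that the summed series converges to the displayed rational function; the extra factor $\prod_i(1-qx_iy)^{-\chi(M_i^{\vee}\otimes L)}$ of Theorem~\ref{t2} then enters on the same footing as the factor $(1+qy)^{\chi(E\otimes L)}$ of Theorem~\ref{t1}, one per tautological insertion.
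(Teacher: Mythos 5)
Your proposal has a fatal gap at its foundation: the ``symmetric-product building block'' is false. Take $N=1$, $s=1$ and look at the coefficient of $q^1$. Since $\mathrm{Sym}^1C=C$ and $B^{[1]}=B$, $B_1^{[1]}=B_1$, the left-hand side contributes
$\chi(\cO_C)+y\,\chi(B)+x_1\,\chi(B_1^{\vee})+x_1y\,\chi(B\otimes B_1^{\vee})$,
whereas the right-hand side contributes only
$\chi(\cO_C)+y\,\chi(B)+x_1y\,\chi(B\otimes B_1^{\vee})$;
the term $x_1\chi(B_1^{\vee})$ is missing, so the identity fails already at $d=1$ whenever $\chi(B_1^{\vee})\neq 0$. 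This is not a fixable oversight: the paper emphasizes (Section \ref{aws}, citing Krug) that no formula of this shape holds for symmetric products of curves, and that Theorem \ref{t2} is true precisely because of the constraint $r\le N-1$, i.e.\ only for $N\ge 2$ once dualized insertions are present. Since your entire strategy assembles the general-$N$ answer by applying this building block factor-by-factor on the fixed loci $\prod_i\mathrm{Sym}^{d_i}C$, the assembly collapses. Even at the heuristic level the bookkeeping is inconsistent: because $\chi\left((M_j\otimes A_i)^{\vee}\otimes L\otimes A_i\right)=\chi(M_j^{\vee}\otimes L)$ for \emph{every} $i$, a factor-wise product would produce each insertion factor $N$ times, so these factors would need to ``collapse'' to a single copy exactly as the $(1-q)^{-\chi(\cO_C)}$ factors do, contrary to your claim that they simply ``survive.''

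Beyond this, your ``main obstacle'' paragraph names the hard part of the problem but does not solve it, and that is where all of the mathematical content lies. The paper's route is quite different: it first reduces by universality (in the spirit of \cite{EGL}) to $C=\mathbb P^1$ with $E$ split and $b_i=a_i+\ell+1\geq 0$, then applies Hirzebruch--Riemann--Roch and \emph{cohomological} Atiyah--Bott localization --- a footnote records that the authors were unable to carry out localization directly in equivariant $K$-theory, which is what you propose with the $\wedge_{-1}\mathrm{N}^{\vee}$ denominators and the limit $t_i\to 1$. The fixed-point sum is then evaluated in closed form by multivariable Lagrange--B\"urmann inversion with the change of variables $q(z_i+y)\prod_{p}(1+z_ix_p)=z_iR(z_i)$; the condition $r\le N-1$ enters not as a ``polynomiality'' condition on off-diagonal terms but to guarantee that $P(z)=zR(z)-q(z+y)\prod_p(1+zx_p)$ has degree $N+1$, providing the extra root $z_{N+1}$. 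The answer is recast as a quotient of determinants, hence as Schur polynomials $s_{\lambda_k}$ evaluated at the roots of $P$, and the $q^d$-coefficient is extracted via Jacobi--Trudi (Lemma \ref{lem:schur_cal}). None of these steps has a counterpart in your proposal, so after removing the false building block what remains is a description of the problem rather than a proof.
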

	
Our proofs rely on universality arguments in the spirit of \cite {EGL} to reduce to the case of genus $0$, and equivariant localization in genus $0$. These are well-established techniques. In general however, the ensuing localization sums are not immediately expressed in closed form. The novelty here is that we show how to overcome the combinatorial difficulties by using Lagrange-B\"urmann inversion and considerations involving Schur polynomials and Jacobi-Trudi determinants. These lead to drastic simplifications of the answers. 

In a similar context, the prior work \cite {MO} also makes use of localization techniques to calculate a large part of the intersection theory of Quot schemes of curves (for quotients of arbitrary ranks of a trivial vector bundle), in particular to derive and extend the Vafa-Intriligator formula. The current $K$-theoretic setup requires that we handle the localization sums differently. We believe that the simplicity of the final formulas makes the calculations worthwhile to be recorded. 

\subsection{Analogies with surfaces}	 \label{aws}Theorems \ref{t1} and \ref{t2} suggest an unexpected analogy between $\mathsf{Quot}_d(E)$ and the Hilbert scheme of points $X^{[d]}$ over a smooth projective surface $X$. Specifically, Theorem 1 can be compared with the calculations of \cite{A, Da, Sc}: $$\sum_{d=0}^{\infty} q^d \chi(X^{[d]}, \wedge_y L^{[d]}) =(1-q)^{-\chi(\mathcal O_X)} (1+qy)^{\chi(L)}.$$ This is proved in \cite {Sc} by passing to the derived category and computing the image of $\wedge^\bullet L^{[d]}$ under the Bridgeland-King-Reid equivalence $${\mathbf D}^{\textrm{b}}(X^{[d]})\simeq \mathbf{D}^{\textrm{b}}_{S_d}(X^d)$$ established in \cite {BKR, H}. The same result is obtained by studying different equivariant limits in the Donaldson-Thomas theory of toric Calabi-Yau $3$-folds in \cite{A}.

In the same vein, Theorem 2 when $r=1$ mirrors the following result of \cite{WZ} and its strengthening by \cite {K}: \begin{align*}\sum_{d=0}^{\infty} q^d \chi\left(X^{[d]}, \wedge_{y} L^{[d]}\otimes (\wedge_{x} M^{[d]})^{\vee}\right)&=(1-q)^{-\chi(\mathcal O_X)} (1+qy)^{\chi(L)}(1+qx)^{\chi(M^{\vee})}(1-qxy)^{-\chi(L\otimes M^{\vee})}.
\end{align*} 
While the latter equality is conjectured to hold in all dimensions $\geq 2$, it was noted in \cite {K} that the naive analogue for symmetric products of curves fails. Theorem \ref{t2} can be viewed as a remedy: a similar but {\it different} formula holds for curves, and only when $N\geq 2$. Theorem \ref{t2} furthermore allows for multiple dualized factors. 

\subsection{Higher rank} Turning to higher rank quotients, while we do not obtain a closed-form expression for the corresponding generating series, our techniques yield the result below. 
We restrict to the case $C=\mathbb P^1$, and $E$ a trivial rank $N$ vector bundle. The Quot scheme $\quot_d(E, r)$ parametrizes short exact sequences $$0\to S\to E\to Q\to 0, \quad \text{rank }Q=r, \quad \deg Q=d.$$ 

	\begin{theorem}\label{t3} Let $\deg L=\ell$ and $0<r<N$. We have 	
	\begin{align*}
			\chi(\quot_d(E, r),\wedge_y L^{[d]}) =(-1)^{(N-r-1)d}\left[q^d\right]\frac{\det(f_i(z_j))}{\det (z_i^{N-j})}.
	\end{align*} 
In the numerator $(f_i(z_j))$ is the $N\times N$ matrix with \begin{align*}
	f_i(z)=\begin{cases}
		z^{\ell+d+N-i+1}& \text{ if }1\le i\le N-r\\
		z^{N-i}(z+y)^{\ell+1}& \text{ if } N-r+1\le i\le N
	\end{cases}
\end{align*}
and $z_1,\dots,z_N$ are the distinct roots of the equation $(z-1)^N-q(z+y)z^{r-1}=0$. The denominator is the Vandermonde determinant. 
\end{theorem}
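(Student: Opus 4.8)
The plan is to realize $\chi$ as a sum of local contributions via equivariant localization on $\mathbb{P}^1$ and then to compress the resulting combinatorial sum into the stated bialternant using Lagrange--B\"urmann inversion. Let $T=(\mathbb{C}^{*})^{N}\times\mathbb{C}^{*}$, where the first factor scales the $N$ summands of $E=\cO_{\mathbb{P}^1}^{\oplus N}$ and the second acts on $\mathbb{P}^1$ fixing $0$ and $\infty$. Since the holomorphic Euler characteristic is a deformation invariant, it is independent of the equivariant parameters; it therefore suffices to evaluate the Atiyah--Bott / holomorphic Lefschetz formula
\[
\chi\big(\quot_d(E,r),\,\wedge_y L^{[d]}\big)=\sum_{p}\frac{\big(\wedge_y L^{[d]}\big)\big|_{p}}{\wedge_{-1}\big(T_{p}\quot_d(E,r)\big)^{\vee}}
\]
over the $T$-fixed points $p$, where $T_{p}\quot_d(E,r)=\Hom(S,Q)$, and then to specialize the equivariant weights to a convenient value.

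The first step is to enumerate the fixed points, which are isolated. A $T$-fixed subsheaf $S\subset E$ must respect the weight decomposition of $E$, so $S=\bigoplus_{a=1}^{N}S_{a}$ with each $S_{a}\subseteq\cO\cdot e_{a}$ a $\mathbb{C}^{*}$-invariant subsheaf of $\cO_{\mathbb{P}^1}$, namely either $0$ or $\cO(-\alpha_{a}[0]-\beta_{a}[\infty])$ for nonnegative integers $\alpha_a,\beta_a$. As $\operatorname{rk}S=N-r$, exactly $N-r$ of the $S_{a}$ are nonzero; let $A$ be this index set. Then $Q=\bigoplus_{a\in A}\big(\cO/S_{a}\big)\oplus\bigoplus_{a\notin A}\cO\cdot e_{a}$, where the first group is torsion of total length $d=\sum_{a\in A}(\alpha_{a}+\beta_{a})$ supported at $0,\infty$ and the second is the rank-$r$ free part. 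A fixed point is thus the datum of an $r$-element subset $A^{c}$ together with the torsion distributions $(\alpha_{a},\beta_{a})_{a\in A}$.

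Next I would compute the two ingredients of each contribution. The free summands $\cO\cdot e_{a}$ with $a\notin A$ contribute $H^{0}(\mathbb{P}^1,L)$, of dimension $\ell+1$, to $L^{[d]}=\pi_{*}(p^{*}L\otimes\mathcal{Q})$; after passing to $\wedge_y$ and to the root variable $z$ these produce precisely the factors $(z+y)^{\ell+1}$ attached to the $r$ indices in $A^{c}$, which become the rows $f_i(z)=z^{N-i}(z+y)^{\ell+1}$. The torsion summands contribute the punctual part together with the tangent weights coming from $\Hom(S,Q)$, and summing the resulting geometric series over all $(\alpha_{a},\beta_{a})$ clears to the equation $(z-1)^{N}=q(z+y)z^{r-1}$, whose $N$ roots are $z_{1},\dots,z_{N}$.

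The final and hardest step is the consolidation into the determinant. Antisymmetrizing the fixed-point sum over the $N$ equivariant variables turns the sum over the choice of subset $A^{c}$ --- that is, over which $r$ rows are of free type $z^{N-i}(z+y)^{\ell+1}$ and which $N-r$ are of torsion type $z^{\ell+d+N-i+1}$ --- into the single bialternant $\det(f_i(z_j))$, by the Weyl-character / Jacobi--Trudi mechanism, while the normalization of the weights contributes the Vandermonde $\det(z_i^{N-j})$ in the denominator. The passage from the fixed-point residue sum to the coefficient $[q^d]$ of this ratio is effected by Lagrange--B\"urmann inversion applied to $q=(z-1)^{N}/\big((z+y)z^{r-1}\big)$, which is also what produces the explicit exponent $\ell+d+N-i+1$ in the torsion rows; the sign $(-1)^{(N-r-1)d}$ is bookkept from the orientation of the normal weights in $\wedge_{-1}(T_p\quot)^{\vee}$. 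I expect the main obstacle to be exactly this last step: organizing the a priori unwieldy multi-index sum --- over subsets and over torsion partitions at both fixed points --- so that it telescopes into the compact bialternant, and matching the Lagrange--B\"urmann extraction with the $d$-dependence that appears simultaneously inside $f_i$ and in the coefficient $[q^d]$.
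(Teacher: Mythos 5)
Your fixed-point enumeration is correct, and you clearly have the right target in view; but the route you propose --- holomorphic Lefschetz localization for the enlarged torus $(\mathbb{C}^{*})^{N}\times\mathbb{C}^{*}$, with isolated fixed points --- is not the route taken in the paper, and it has a genuine unfilled gap at exactly the hard step. The paper does \emph{not} localize in $K$-theory: it applies Hirzebruch--Riemann--Roch first and then cohomological Atiyah--Bott localization for the torus scaling the summands of $E$ only (see the footnote in Section \ref{torusact}: the authors state they were unable to establish these results by localizing directly in $K$-theory). With that smaller torus the fixed loci are the positive-dimensional products $\mathbb{P}^{d_1}\times\cdots\times\mathbb{P}^{d_s}$, $s=N-r$, indexed by pairs $(\vec{d},I)$; integration over them is a coefficient extraction $[h_1^{d_1}\cdots h_s^{d_s}]$ in the hyperplane classes, and it is precisely \emph{this} extraction that Lagrange--B\"urmann converts into $[q^d]$ of an expression in the roots $z_i$ of $R(z)-q(z+y)z^{r-1}$. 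In your setup there are no hyperplane classes and no such extraction: each isolated fixed point contributes a ratio whose denominator $\wedge_{-1}(T_p\quot)^{\vee}$ is a product of factors $1-s^{j}t_a/t_b$ over consecutive rotation weights $j$ determined by $(\alpha_a,\beta_a)$, and the generating series over the torsion data is a multivariate $q$-hypergeometric sum, not a geometric series. Your assertion that this sum ``clears to the equation $(z-1)^{N}=q(z+y)z^{r-1}$'' is exactly the statement that needs proof, and no mechanism for it is offered; it is also the step the authors say they could not carry out $K$-theoretically.

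A second, related problem: you say the Euler characteristic ``is independent of the equivariant parameters,'' so one may ``specialize the equivariant weights to a convenient value.'' The equivariant Lefschetz number genuinely depends on the torus parameters; the numerical $\chi$ is its value at the identity, where every individual fixed-point contribution is singular. One must therefore sum (or otherwise reorganize) the contributions \emph{before} specializing, and controlling that limit is the crux of the whole computation. The paper handles the analogous issue by showing that, after Lagrange--B\"urmann, the localization sum \eqref{prelap} equals a generalized Laplace expansion (along the last $N-s$ rows) of the quotient of determinants \eqref{debig}; since that quotient is symmetric in $z_1,\dots,z_N$, it is a rational function of the $\alpha_j$ with no pole at $\alpha_j=1$, so setting $\epsilon=0$ is legitimate. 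This Laplace-expansion identity is also the concrete content missing from your appeal to a ``Weyl-character / Jacobi--Trudi mechanism.'' To rescue your approach you would need either to actually resum the $q$-hypergeometric fixed-point series and control its limit as all parameters tend to $1$, or to first integrate out the rotation action over the symmetric-product factors --- at which point you are reconstructing the paper's cohomological argument rather than giving an alternative proof.
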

\noindent In the statement above, the brackets indicate taking the coefficient of the corresponding power of $q$. 

In Corollary \ref{thm:high_rank_det} of Section \ref{s4}, we note a connection between $\chi\left(\quot_d(E, r) ,\det L^{[d]}\right)$ and Schur polynomials. We also work out several specializations of the corresponding formula. 

\subsection{Symmetric products} For rank zero quotients, we similarly study the series of symmetric powers of the tautological bundles $$\sum_{d=0}^{\infty} q^d \chi\left(\quot_d(E), \mathsf{Sym}_y L^{[d]}\right).$$ Here, for any vector bundle $V$, we write$$\quad \mathsf{Sym}_yV=\sum_{k=0}^{\infty} y^k\, \mathsf{Sym}^kV.$$ 

\begin{theorem}\label{t4} For $C=\mathbb P^1$ and $d\geq k$,  we have $$\chi\left(\quot_d(E), \mathsf {Sym}^k L^{[d]}\right)=\binom{\chi(E\otimes L)+k-1}{k}.$$ \end{theorem}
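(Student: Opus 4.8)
The plan is to recast the statement as a cohomological vanishing using the $\lambda$-ring structure of $K$-theory. Tensoring the universal sequence $0\to\mathcal S\to p^*E\to\mathcal Q\to 0$ by $p^*L$ and pushing forward along $\pi$, and using that $\mathcal Q$ has rank $0$ so that $R^1\pi_*(\mathcal Q\otimes p^*L)=0$, one obtains in $K(\quot_d(E))$ the relation $[L^{[d]}]=\chi(E\otimes L)\,[\cO]-[\mathcal K]$, where $\mathcal K:=R\pi_*(\mathcal S\otimes p^*L)$ is a virtual bundle of rank $\chi(E\otimes L)-d$. Since $\mathsf{Sym}_y$ and $\wedge_{-y}$ are mutually inverse homomorphisms from the additive group of $K(\quot_d(E))$ to $K(\quot_d(E))[[y]]^{\times}$, this yields $\mathsf{Sym}_y L^{[d]}=(1-y)^{-\chi(E\otimes L)}\,\wedge_{-y}\mathcal K$. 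Applying the additive Euler-characteristic map and comparing coefficients of $y^k$ for $k\le d$, Theorem \ref{t4} becomes equivalent to
\[
\chi(\cO_{\quot_d(E)})=1\qquad\text{and}\qquad \chi\big(\quot_d(E),\,\wedge^{j}\mathcal K\big)=0 \ \ (1\le j\le d),
\]
the surviving $j=0$ term producing $(1-y)^{-\chi(E\otimes L)}=\sum_k\binom{\chi(E\otimes L)+k-1}{k}y^k$.

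Second, I would verify and, more importantly, interpret this in the rank-one model $N=1$, where $\quot_d(E)\cong\mathbb P^d$ and a direct pushforward computation identifies $\mathcal K$ with a multiple of $\cO_{\mathbb P^d}(-1)$, its multiplicity equal to $\mathrm{rk}\,\mathcal K=\chi(E\otimes L)-d$. Consequently $\wedge^{j}\mathcal K$ is a multiple of $\cO_{\mathbb P^d}(-j)$, and the required vanishing is precisely the acyclicity $H^{\bullet}(\mathbb P^d,\cO(-j))=0$ for $1\le j\le d$, while $\chi(\cO_{\mathbb P^d})=1$ is clear. This already explains the sharp hypothesis: the range $d\ge k$ is exactly the acyclic window $j\le d$, and the identity fails once $j$ exceeds $d$, which is why the closed form degenerates for $k>d$.

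For general $N$ I would prove the vanishing by equivariant localization on $\quot_d(E)$, the same genus-$0$ apparatus used for Theorems \ref{t1} and \ref{t2}; no universality reduction is needed since $C=\mathbb P^1$. Splitting $E=\bigoplus_i\cO(a_i)$ and using the $\mathbb C^*\times(\mathbb C^*)^N$-action, the fixed points are the isolated configurations recording, in each summand, the lengths $\alpha_i,\beta_i$ of the quotient at $0$ and $\infty$ with $\sum_i(\alpha_i+\beta_i)=d$; at each of them the tangent weights and the weights of $L^{[d]}$, hence of $\mathcal K=\chi(E\otimes L)[\cO]-[L^{[d]}]$, are explicit. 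The holomorphic Lefschetz formula then writes $\chi(\wedge^{j}\mathcal K)$ as a sum over these configurations whose numerators are the elementary symmetric functions $e_j$ in the equivariant weights of $\mathcal K$. I would assemble the generating series in $q$ (over $d$) and $y$ and resum it by Lagrange--B\"urmann inversion together with the Jacobi--Trudi and Schur-polynomial manipulations already developed in the paper, the computation being formally identical to the exterior-power one.

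The crux, and the expected main obstacle, is this final resummation: the localization terms do not vanish individually, so the identity $\chi(\wedge^{j}\mathcal K)=0$ emerges only after a global Lagrange--B\"urmann simplification, and the endpoint $j=d$ is delicate because it is exactly where the cancellation ceases --- precisely mirroring the necessity of $d\ge k$. A more conceptual alternative, which I would pursue in parallel, is to prove the vanishing geometrically by producing $\quot_d(E)$-analogues of the projective-space acyclicity of the $N=1$ model, for instance by resolving $\wedge^{j}\mathcal K$ through pushforwards from relative (flag-type) Quot schemes; such an argument would bypass the combinatorics and explain intrinsically why the acyclic window has length exactly $d$.
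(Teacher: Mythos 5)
Your opening reduction is correct and genuinely illuminating: the $K$-theory relation $[L^{[d]}]=\chi(E\otimes L)[\cO]-[\mathcal K]$ with $\mathcal K=R\pi_*(\mathcal S\otimes p^*L)$, the identity $\mathsf{Sym}_y L^{[d]}=(1-y)^{-\chi(E\otimes L)}\wedge_{-y}\mathcal K$, and the resulting equivalence of Theorem \ref{t4} with $\chi(\cO_{\quot_d(E)})=1$ together with $\chi(\wedge^{j}\mathcal K)=0$ for $1\le j\le d$ are all valid (one can check this equivalence against Theorem \ref{t10}), and your $N=1$ verification via $H^{\bullet}(\mathbb P^d,\cO(-j))=0$ is right and does explain the sharpness of $d\ge k$. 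But this reduction is exactly that --- an equivalent reformulation, not a reduction to anything known or easier --- so the entire mathematical content of the theorem now sits in the vanishing $\chi(\wedge^{j}\mathcal K)=0$ for $N\ge 2$, and this you do not prove: you only assert that a localization computation would go through, ``formally identical to the exterior-power one,'' while simultaneously conceding that this resummation is ``the crux, and the expected main obstacle.'' That is a genuine gap, and the assertion covering it is not merely unproved but inaccurate.

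Concretely, if you run the localization for $\wedge_{-y}\mathcal K$ over the fixed loci $\mathbb P^{d_1}\times\cdots\times\mathbb P^{d_N}$, the restriction of $\mathcal K$ contributes factors $(1-y/z_i)^{b_i-d_i}$, and the exponent $-d_i$ part must be absorbed into the Lagrange--B\"urmann substitution; this forces the change of variables $(z-y)R(z)-qz=0$, not the exterior-power one $zR(z)-q(z+y)=0$. From that point on the computation coincides with the paper's own proof of Theorem \ref{t4} (as it must, since the two quantities differ by the global factor $(1-y)^{-\chi}$), and that proof requires genuinely new steps beyond Theorem \ref{t1}: a column-splitting of the enlarged determinant, Schur polynomials of hook type $(\ell-N-d,1^{N-m})$, and a residue evaluation (Lemmas \ref{l11} and \ref{l12}) --- none of which appear in the exterior-power argument. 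In addition, the specific apparatus you propose --- the full torus $\mathbb C^*\times(\mathbb C^*)^N$ with isolated fixed points and the holomorphic Lefschetz formula --- does not mesh with the paper's Lagrange--B\"urmann machinery at all: that machinery extracts coefficients $h_1^{d_1}\cdots h_N^{d_N}$ against exponents $d_i+1$, i.e., it is tailored to integration over the positive-dimensional fixed loci of the smaller torus. Indeed, the authors remark in a footnote that they were unable to establish their results by localizing directly in $K$-theory. So the plan as stated would need to be rebuilt on the paper's cohomological setup, at which point one is simply reproducing the paper's proof; the work you have actually supplied (the $\lambda$-ring repackaging and the $N=1$ case) does not shortcut any of it.
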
 
\noindent The answer is independent of $d\geq k$. The question was also studied over Hilbert schemes of points on surfaces \cite {A, Sc2}. Just as for exterior powers, the analogy with the Hilbert scheme of surfaces persists here as well. Indeed, the following result was established in \cite {A} via the Donaldson-Thomas theory of toric Calabi-Yau $3$-folds $$\chi\left(X^{[d]}, \mathsf {Sym}^k L^{[d]}\right)=\binom{\chi(L)+k-1}{k},$$ whenever $d\geq k$ and $\chi(\mathcal O_X)=1$. 

Theorem \ref{t10} in Section \ref{s3} gives a more general result than Theorem \ref{t4} above, covering the case $d<k$ as well. 

In arbitrary genus, universality arguments as in \cite {EGL} show that the factorization $$\mathsf{W}=\sum_{d=0}^{\infty} q^d \chi\left(\quot_d(E), \mathsf{Sym}_y L^{[d]}\right)=\mathsf A^{\chi(\mathcal O_C)} \cdot \mathsf B^{\chi(E\otimes L)}$$ holds true, for two universal power series $\mathsf A, \mathsf B\in \mathbb Q(y)[[q]]$ that depend on $N$, but not on the triple $(C, E, L)$. Our results give precise information about the series $\mathsf B$. While we can determine $\mathsf A$ in principle, we do not have a closed-form expression.  

\begin{theorem} \label{t5}We have $$\mathsf B=f\left(\frac{qy}{(1-y)^{N+1}}\right)$$ where $f(z)$ is the solution to the equation $$f(z)^N-f(z)^{N+1}+z=0,\,\quad f(0)=1.$$
\end{theorem}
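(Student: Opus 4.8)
The plan is to extract $\mathsf B$ from the genus-zero geometry by isolating the dependence of the generating series on $\chi(E\otimes L)$. Granting the factorization $\mathsf W=\mathsf A^{\chi(\cO_C)}\,\mathsf B^{\chi(E\otimes L)}$ recorded above, taking logarithms gives $\log \mathsf W=\chi(\cO_C)\log \mathsf A+\chi(E\otimes L)\log \mathsf B$, so $\log \mathsf B$ is exactly the coefficient of $\chi(E\otimes L)$ once each $q^dy^k$-coefficient of $\log \mathsf W$ is viewed as a polynomial in the two numerical invariants $\chi(\cO_C)$ and $\chi(E\otimes L)$. Since $\mathsf A$ and $\mathsf B$ depend only on $N$, it is enough to work over $C=\mathbb P^1$, where $\chi(\cO_C)=1$, and to let $(E,L)$ vary so that $\chi(E\otimes L)$ runs over an arithmetic progression of integers; this pins down the relevant polynomials and hence $\mathsf B$. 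Concretely, I would either form the finite difference $\mathsf B^{N}=\mathsf W_0(\ell+1)/\mathsf W_0(\ell)$ obtained by twisting $L=\cO(\ell)$ on $E=\cO^{\oplus N}$ and then take the unique $N$-th root with constant term $1$, or extract the $\chi(E\otimes L)$-linear part of $\log \mathsf W_0$ directly.

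It then remains to compute the genus-zero series $\mathsf W_0=\sum_d q^d\,\chi(\quot_d(E),\mathsf{Sym}_yL^{[d]})$ in closed form. I would do this by $T$-equivariant localization on $\quot_d(E)$ over $\mathbb P^1$, in the same spirit as the proof of Theorem \ref{t4} and its extension covering $d<k$. Taking $E=\bigoplus_j \cO(e_j)$ and the torus $T=\mathbb C^\ast\times(\mathbb C^\ast)^N$ acting on the base and on the summands, the fixed loci are isolated and indexed by distributions of the length $d$ among the two $T$-fixed points of $\mathbb P^1$ and the $N$ summands of $E$. By the holomorphic Lefschetz formula, each fixed point contributes a product of factors $(1-y\,w)^{-1}$ coming from the weights $w$ of $L^{[d]}$, divided by the product $\prod(1-w^{-1})$ over the tangent weights; passing to the non-equivariant limit replaces these weights by $1$ and is the source of the factors $(1-y)$ in the final substitution.

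The crux is the resummation. The key structural point I would establish is that the fixed-point sum factorizes into identical local contributions attached to ``section slots,'' whose total number is governed by $\chi(E\otimes L)=\sum_j(e_j+\ell+1)$; this is what makes $\mathsf W_0$ exponentiate as $\mathsf A\cdot\mathsf B^{\chi(E\otimes L)}$ and exhibits $\mathsf B$ as the per-slot generating series. I expect this per-slot sum to satisfy a simple recursion in $d$ (equivalently, to be presented by a residue), which Lagrange--B\"urmann inversion converts into the implicit algebraic relation $\mathsf B^{N}(\mathsf B-1)=w$, equivalently $\mathsf B^N-\mathsf B^{N+1}+w=0$, after the change of variable $w=qy/(1-y)^{N+1}$. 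This is precisely the stated equation $f(z)^N-f(z)^{N+1}+z=0$ evaluated at $z=w$, and the boundary value $\mathsf B|_{q=0}=1$ matches $f(0)=1$, selecting the correct branch. The main obstacle is organizing the localization sum so that the factorization into slots and the substitution $w=qy/(1-y)^{N+1}$ become manifest, and then recognizing the degree-$(N+1)$ equation as the image of the sum under Lagrange--B\"urmann inversion. As a sanity check, the first-order solution $\mathsf B=1+qy/(1-y)^{N+1}+O(q^2)$ reproduces the $\chi(E\otimes L)$-linear term of $\chi(\quot_1(E),\mathsf{Sym}^kL^{[1]})$ over the projective bundle $\quot_1(E)=\mathbb P(E^\vee)$.
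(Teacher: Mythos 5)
Your opening reduction is exactly the paper's: granting the factorization $\mathsf W=\mathsf A^{\chi(\mathcal O_C)}\cdot\mathsf B^{\chi(E\otimes L)}$, you specialize to $\mathbb P^1$ and isolate $\mathsf B^N=\mathsf W_{\ell+1}/\mathsf W_{\ell}$ by twisting $L$; this is precisely equation \eqref{identity}. The genuine gap is everything after that. You never compute the genus-zero series, and you never derive the equation $\mathsf B^N-\mathsf B^{N+1}+z=0$: you state that you ``expect'' the fixed-point sum to factor into identical per-slot contributions counted by $\chi(E\otimes L)$, and that a recursion plus Lagrange--B\"urmann inversion ``converts'' it into the stated relation. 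This assumes the conclusion. The termwise factorization you posit is not a property of the localization sum: the fixed loci and their normal-bundle weights couple the different summands of $E$ (cross-factors of the form $z_i-z_j$ appear throughout), so individual fixed-point contributions do not split into ``section slots.'' The factorization of the \emph{total} series is a global consequence of the universality arguments you already granted, and it supplies no termwise structure one can resum. Moreover, your proposed vehicle---the holomorphic Lefschetz formula for the enlarged torus, i.e.\ localization directly in $K$-theory---is exactly the route the authors flag in a footnote as one they were unable to push through; the paper instead works cohomologically via Hirzebruch--Riemann--Roch and Atiyah--Bott.

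What actually fills this gap in the paper is substantial. First, Theorem \ref{t10} gives the closed form
\begin{equation*}
\chi\left(\quot_d(E),\mathsf{Sym}_y L^{[d]}\right)=\sum_{k=0}^{d}\binom{-\chi+d(N+1)}{k}\frac{(-y)^k}{(1-y)^{d(N+1)}}
\end{equation*}
in genus zero, proved by the full localization machinery together with Jacobi--Trudi manipulations and the residue computations of Lemmas \ref{l11} and \ref{l12}. Second, the identity $\mathsf W_{\ell+1}=\mathsf W_{\ell}\cdot \mathsf B^N$ with the claimed $\mathsf B$ is then verified coefficient by coefficient: it reduces, via Pascal's identity and downward induction (equation \eqref{down}), to the generating-function identity $\mathsf G\cdot\mathsf F_{\chi}=\mathsf F_{\chi}-\mathsf F_{\chi+N}$ for two auxiliary series whose closed forms are themselves obtained from the one-variable Lagrange--B\"urmann formula. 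None of this combinatorial core---which is where the specific degree-$(N+1)$ algebraic equation actually comes from---appears in your proposal, so as it stands the argument is incomplete at its crux.
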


\noindent For instance, in the special case $N=2$, we obtain $$f(z)=1+\frac{4}{3} \sinh^2\left(\frac{1}{3}\arcsinh\left(\frac{3\sqrt{3z}}{2}\right)\right).$$

In the last section, we raise a few questions related to the results in this paper. 
\subsection {Previous work.} 

Several aspects of the geometry of the punctual Quot schemes $\mathsf{Quot}_d(E)$ are already understood. Since it is difficult to be exhaustive here, we mention only a few selected results. The Poincar\'e polynomials and the motives of $\mathsf{Quot}_d(E)$ are computed in \cite{B, BFP, C, R, St}, the cohomology and stabilization phenomena are considered in \cite{M}, automorphisms and Torelli type results are studied in \cite{BDH, Ga}, various cones of divisors are investigated in \cite{GS, St}, positivity results for the tautological vector bundles are obtained in \cite{O}, the cohomology of the tangent bundle is studied in \cite{BGS}. The punctual Quot schemes bear close connections to the moduli space of bundles over curves, and in fact, the study of the Poincar\'e polynomials and motives of the latter can be undertaken in this context \cite{BD, BGL, HPL}. As alluded to above, the (virtual) intersection theory of Quot schemes (of quotients of arbitrary rank) over curves was studied in a wider context, see \cite{Ber1, Ber2, MO}. Furthermore, in this wider context, connections with the intersection theory of the moduli of bundles were proven in \cite{BDW, Ma, MarOp}. 

Regarding the $K$-theory of Quot schemes, fewer direct calculations were available. \footnote{Over surfaces, the virtual $K$-theory of the punctual Quot schemes $\mathsf{Quot}_d(E)$ was thoroughly studied in \cite{AJLOP, Bo}. Via cosection localization to a canonical curve, these results can be used to compute certain {\it twisted} holomorphic Euler characteristics of tautological bundles over the punctual Quot schemes. These calculations are indirect, and more importantly, due to the aforementioned twists, the results of our paper are not accessible via these methods. } We note however the work of \cite{RZ} which connects certain holomorphic Euler characteristics of {\it line} bundles over certain Quot schemes (parametrizing rank 2 subbundles of a trivial high rank bundle) with the Verlinde numbers over the moduli of rank $2$ bundles, via a wallcrossing analysis as in \cite{BDW, T}. \footnote{It is natural to ask whether the Euler characteristic $\chi(\wedge^d L^{[d]})$ given by Theorem \ref{t1} can be computed via wall-crossing, say in rank $2$ for simplicity. In principle, we are led to finding the Euler characteristics of certain higher rank twists of the theta line bundles over the moduli space of semistable vector bundles, whose intersection theory is known. However, obtaining exact formulas in this fashion seems combinatorially difficult.} 

\subsection{Acknowledgements} We thank Noah Arbesfeld, Patrick Girardet, Drew Johnson, Woonam Lim, Alina Marian, Rahul Pandharipande and Ming Zhang for discussions related to $K$-theoretic invariants of Quot and Hilbert schemes. The authors are supported by the NSF through grant DMS 1802228. 	 
\section {Rank zero quotients and exterior powers}
In this section we prove Theorems \ref{t1} and \ref{t2}. The proofs of the two theorems are similar. The calculations for Theorem \ref{t1} are however simpler and already illustrate the main points. 
	\label{section2}
	\subsection {Torus action} \label{torusact}
		We first establish Theorem \ref{t1} when $C=\mathbb P^1$ and $E=\cO(a_1)\oplus\cdots \oplus \cO(a_N)$, and for $L$ such that $$\deg L+a_i+1\geq 0$$ for all $i$. The arbitrary genus case follows from here by universality arguments, see Section \ref{arb} below. 
	
	Under the above assumptions, we seek to show that \begin{equation}\label{answ}
		\sum_{d=0}^{\infty} q^d\chi(\quot_d, \wedge_yL^{[d]})=(1-q)^{-1}(1+qy)^{\chi(E\otimes L)}.
	\end{equation} 
Here, for simplicity, we wrote $\quot_d$ instead of $\quot_d(E)$. 

	We evaluate expression \eqref{answ} via Hirzebruch-Riemann-Roch 
	$$\chi\left(\quot_d, \wedge_y L^{[d]}\right)=\int_{\quot_d} \ch (\wedge_y L^{[d]})\, \td \left(\quot_d\right),$$ and we use $\mathbb{C}^*$-equivariant localization to compute the integral. \footnote{It is natural to attempt localization directly in $K$-theory, but we were unable to establish the result in this fashion.}

To this end, we let $\mathbb C^*$ act on $E$ with weight $-w_i$ on the summand $\cO(a_i)$. This induces a $\mathbb C^{\star}$-action on $\quot_d$. The fixed subbundles correspond to split inclusions $$S=\bigoplus_{i=1}^{N} K_i(a_i)\hookrightarrow E=\bigoplus_{i=1}^{N} \mathcal O(a_i).$$ Thus,  
the fixed loci are products of projective spaces $$\fix_{\vec{d}}=\mathbb{P}^{d_1}\times\cdots\times \mathbb{P}^{d_N}$$ for vectors $\vec d=(d_1, \ldots, d_N)$ such that $d_1+\cdots +d_N=d$. The factor $\mathbb{P}^{d_i}$ corresponds to the Hilbert scheme of $d_i$ points of $\mathbb{P}^1$ parameterizing short exact sequences $$0\to K_i\to \cO\to T_i\to 0$$ such that $T_i$ is a torsion sheaf of length $d_i$. 

There is a universal exact sequence
	\[0\to \mathcal{K}_i\to \cO\to \mathcal{T}_i\to 0  \] over the product $\mathbb{P}^1\times \mathbb{P}^{d_i}$, 
	 with the universal kernel given by \begin{equation}\label{ki}\mathcal{K}_i = \cO_{\mathbb{P}^1}(-d_i)\boxtimes \cO_{\mathbb{P}^{d_i}}(-1).\end{equation}
For future reference, we note that the universal exact sequence $0\to \mathcal {S}\to p^*E\to \mathcal Q\to 0$ over $\mathbb P^1\times \quot_d$ restricts to $\mathbb P^1\times \fix_{\vec{d}}$ as 
	\begin{equation}\label{e5}
		0\to \bigoplus_{i\in [N]} \mathcal{K}_i(a_{i})\to \cO(a_{1})\oplus\cdots \oplus \cO(a_{N})\to  \bigoplus_{i\in [N]}\mathcal{T}_i(a_{i})\to 0,
	\end{equation}
where pullbacks from the factors are understood above. We also set $[N]=\{1, 2, \ldots, N\}.$

	By Atiyah-Bott localization, we have \begin{equation}\label{ab}
		\chi(\quot_d,\wedge_y L^{[d]}) = \sum_{|\vec{d}|=d}\int_{\fix_{\vec{d}}}
		\ch(\wedge_y L^{[d]})\frac{\td(\quot_d)}{e_{\mathbb{C}^*}(\textrm {N}_{\vec{d}})}\bigg|_{\fix_{\vec{d}}}. 
	\end{equation}
Here $\textrm{N}_{\vec d}$ denotes the normal bundle of the fixed locus $\textrm{F}_{\vec d}$.

\subsection{Explicit calculations} We proceed to calculate the expressions appearing in the localization sum \eqref{ab}. In the next subsections, we record the Todd genera, the normal bundle contributions and the Chern characters of the tautological bundles. 
	\subsubsection{Todd Classes}  By \eqref{e5}, the tangent bundle $T{\quot_d}=\Hom_\pi (\mathcal {S},\mathcal Q)$ restricts to \[\bigoplus_{i, j\in [N]}\pi_*\left(\mathcal{K}^{\vee}_i(-a_{i}) \otimes\mathcal{T}_j(a_{j})\right) \] over the fixed locus $\fix_{\vec{d}}= \mathbb{P}^{d_1}\times\cdots\times \mathbb{P}^{d_N}$. Here $\pi:\quot_d\times \mathbb P^1\to \quot_d$ denotes the projection. In $K$-theory, the above expression equals \[\bigoplus_{i,j\in [N]}\pi_*\left(\mathcal{K}_i^{\vee}(a_j-a_{i})\right)-\bigoplus_{i,j\in [N]}\pi_*\left(\mathcal{K}^{\vee}_i \otimes\mathcal{K}_j(a_{j}-a_{i})\right) .\] In this discussion $\pi_{\star}=R^0\pi_{\star}-R^1\pi_{\star}.$ Therefore the Todd class of ${\quot_d}$ restricted to each fixed locus is \[\prod_{i,j\in [N]}\td\left(\pi_*\left(\mathcal{K}^{\vee}_i(a_j-a_{i})\right)\right)\left(\prod_{i,j\in [N],\,\,i\neq j}\td\left(\pi_*\left(\mathcal{K}^{\vee}_i \otimes\mathcal{K}_j(a_{j}-a_{i})\right)\right)\right)^{-1}. \] The above $(i, j)$-terms carry the weight $w_{i}-w_j$. The assumption $i\neq j$ in the second product can be made since the term $i=j$ is trivial in genus $0$.

	\subsubsection{Equivariant normal bundles} Over each fixed locus, the normal bundle is given by the moving part of the tangent bundle:
	\begin{align}\label{movingpart}
		\textrm{N}_{\vec {d}}=T^{\text{mov}}\bigg|_{\fix_{\vec{d}}}&= \bigoplus_{ i\ne j}\pi_*\left(\mathcal{K}^{\vee}_i(-a_{i}) \otimes\mathcal{T}_j(a_{j})\right)\\
		&= \bigoplus_{i\ne j}\pi_*\left(\mathcal{K}_i^{\vee}(a_j-a_{i})\right)-\bigoplus_{i\ne j}\pi_*\left(\mathcal{K}^{\vee}_i \otimes\mathcal{K}_j(a_{j}-a_{i})\right),\nonumber
	\end{align}
	where we continue to keep track of the weights $w_{i}-w_j$. Therefore, we find the Euler classes  
	\begin{align*}
		\frac{1}{e_{\mathbb{C}^*}(\textrm{N}_{\vec d})}=\prod_{i, j\in [N], \,\,i\ne j } \bigg(e_{\mathbb{C}^*}\left(\pi_{\star}\left(\mathcal{K}_i^\vee(a_j-a_{i})\right)\right) \bigg)^{-1}\prod_{i,j\in [N], \,i\ne j}e_{\mathbb{C}^*}\left(\pi_{\star}\left(\mathcal{K}^{\vee}_i \otimes\mathcal{K}_j(a_{j}-a_{i})\right)\right).
	\end{align*}
	
	Collecting all expressions above, we obtain that over the fixed locus $\fix_{\vec{d}}$, the factor $\frac{\td (\quot_d)}{e_{\mathbb{C}^*}(\textrm{N}_{\vec d})}$ in the localization expression \eqref{ab} restricts to
	\begin{equation}\label{toddeu}
		\prod_{i\in[N]}\td\left(\pi_* \left(\mathcal{K}_i^\vee\right)\right) \prod_{i,j\in [N],\,\, i\neq j}\frac{\td}{e_{\mathbb{C}^*}}\bigg(\pi_*\left(\mathcal{K}^{\vee}_i(a_j-a_{i})\right)\bigg)\prod_{i, j\in [N],\, i\ne j}\frac{e_{\mathbb{C}^*}}{\td}\bigg(\pi_{\star}\left(\mathcal{K}^{\vee}_i \otimes\mathcal{K}_j(a_{j}-a_{i})\right) \bigg).
	\end{equation}
	\vskip.1in
	\subsubsection{Explicit contributions} The terms in \eqref{toddeu} can be made explicit. For the first term, recalling \eqref{ki}, we immediately compute $$\pi_{\star} (\mathcal K_i^{\vee})=\mathbb C^{d_i+1}\otimes \mathcal O_{{\mathbb P}^{d_i}}(1)\implies \td(\pi_*(\mathcal{K}^\vee_i))= \left(\frac{h_i}{1-e^{-h_i}}\right)^{d_i+1},$$ 
	where $h_i$ is the hyperplane class on $\mathbb{P}^{d_i}$ (by abuse of notation also pulled back to $ \mathbb{P}^{d_1}\times\cdots\times \mathbb{P}^{d_N}$). The equivariant weights vanish for this term. (This is the Todd genus of the projective space, as it should.)
	
	Turning to the remaining terms, more generally, equation \eqref{ki} straightforwardly yields 
	\begin{align*}
		c(\pi_*(\mathcal{K}^\vee_i(a_j-a_{i})))&=(1+h_i)^{d_i+a_j-a_i+1}\\
		c(\pi_*(\mathcal{K}^{\vee}_i \otimes\mathcal{K}_j(a_{j}-a_{i})))&= (1+(h_i-h_j))^{d_i-d_j+a_j-a_i+1}.
	\end{align*}
In the equivariant cohomology, recalling that the above sheaves carry the weight $w_i-w_j$, we obtain 
	\begin{align*}
		c(\pi_*(\mathcal{K}^\vee_i(a_j-a_{i})))&=(1+(h_i+w_i\epsilon-w_j\epsilon))^{d_i+a_j-a_i+1}\\
		c(\pi_*(\mathcal{K}^{\vee}_i \otimes\mathcal{K}_j(a_{j}-a_{i})))&= (1+(h_i+w_i\epsilon-h_j-w_j\epsilon))^{d_i-d_j+a_j-a_i+1}.
	\end{align*}
Here, $\epsilon$ denotes the equivariant parameter. This implies the following expressions for the equivariant Todd genera
	\begin{align*}
		\td(\pi_*(\mathcal{K}^\vee_i(a_j-a_{i})))&= \bigg(\frac{h_i+w_i\epsilon-w_j\epsilon}{1-e^{-(h_i+w_i\epsilon-w_j\epsilon)}}\bigg)^{d_i+a_j-a_i+1}\\
		\td(\pi_*(\mathcal{K}^{\vee}_i \otimes\mathcal{K}_j(a_{j}-a_{i})))&= \bigg(\frac{h_i+w_i\epsilon-h_j-w_j\epsilon}{1-e^{-(h_i+w_i\epsilon-h_j-w_j\epsilon)}}\bigg)^{d_i-d_j+a_j-a_i+1}.
	\end{align*}
	Similarly we obtain the Euler classes
	\begin{align*}
		e_{\mathbb{C}^*}(\pi_*(\mathcal{K}^\vee_i(a_j-a_{i})))&= (h_i+w_i\epsilon-w_j\epsilon)^{d_i+a_j-a_i+1}\\
		e_{\mathbb{C}^*}(\pi_*(\mathcal{K}^{\vee}_i \otimes\mathcal{K}_j(a_{j}-a_{i})))&= (h_i+w_i\epsilon-h_j-w_j\epsilon)^{d_i-d_j+a_j-a_i+1}.
	\end{align*}
	\vskip.1in
\noindent {\bf Simplification.} All told, substituting the above expressions into \eqref{toddeu} and cancelling terms, we obtain 
	\begin{equation}\label{alltold}\frac{\td (\quot_d)}{e_{\mathbb{C}^*}(\textrm{N}_{\vec d})}\bigg|_{\fix_{\vec{d}}}=\prod_{i\in [N]}h_i^{d_i+1}\prod_{ i,j\in [N]}\bigg( \frac{z_i}{z_i-\alpha_j} \bigg)^{d_i+a_j-a_i+1} \prod_{i, j\in [N],\,i\ne j}\bigg(\frac{z_i-z_j}{z_i}\bigg)^{d_i-d_j+a_j-a_{i}+1}\end{equation} where we set for notational convenience $$z_i=e^{h_i+w_i\epsilon}, \quad \alpha_i=e^{w_i\epsilon}.$$ We rewrite this in a slightly more convenient form in terms of the polynomial $$R(z)=\prod_{j\in [N]}(z-\alpha_j).$$ Combining the $(i, j)$ and $(j, i)$-factors in the last product appearing in \eqref{alltold}, and judiciously accounting for the remaining terms, we eventually obtain	
	\begin{equation}\label{toddeufinal}\frac{\td (\quot_d)}{e_{\mathbb{C}^*}(\textrm{N}_{\vec d})}\bigg|_{\fix_{\vec{d}}}=\mathsf {u}\cdot \prod_{i\in [N]}\bigg(\frac{h_i}{R(z_i)} \bigg)^{d_i+1}z_i^{d+1} \bigg(\frac{R(z_i)}{\prod_{j\in [N]}(z_j-\alpha_i)}\bigg)^{a_i+\ell+1} 
		\cdot \prod_{i, j\in [N],\,\,i<j}(z_i-z_j)^2\end{equation} for the sign $$\mathsf u=(-1)^{(N-1)(d+\sum (a_i+\ell+1))+\binom{N}{2}}.$$
The integer $\ell$ included in the above expression will be useful later on. For now, the value of $\ell$ plays no role. Any $\ell$ will work since \[\prod_{i}\frac{R(z_i)}{\prod_{j}(z_j-\alpha_i)}=1. \] 

	\subsubsection{Chern classes} For the remaining term in \eqref{ab}, we record the following 
	\begin{lemma}\label{chern_classes}
		The equivariant restrictions of the Chern characters of the tautological bundles to the fixed loci are given by 
		\begin{align}
			\label{chl}\ch(\wedge_y L^{[d]} )\bigg|_{\fix_{\vec{d}}}&=\prod_{i} \bigg(\frac{z_i(\alpha_i+y)}{\alpha_i(z_i+y)}\bigg)^{a_i+\ell+1}\bigg(\frac{z_i+y}{z_i} \bigg)^{d_i}\\
			\label{chm}\ch\left((\wedge_{x}M^{[d]})^{\vee}\right)\bigg|_{\fix_{\vec{d}}}&=\prod_{i} \bigg(\frac{1+\alpha_ix}{1+z_ix}\bigg)^{a_i+m+1}(1+z_ix)^{d_i}
		\end{align}
		where $L$ and $M$ are line bundles of degree $\ell$ and $m$ respectively. 
	\end{lemma}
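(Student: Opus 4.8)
The plan is to restrict everything to a single fixed locus $\fix_{\vec d}=\mathbb{P}^{d_1}\times\cdots\times\mathbb{P}^{d_N}$ and reduce the statement to the cohomology of line bundles on $\mathbb{P}^1$. First I would read off from the restricted universal sequence \eqref{e5} that $\mathcal Q|_{\fix_{\vec d}}=\bigoplus_{i\in[N]}\mathcal T_i(a_i)$, where the $i$-th summand carries the $\mathbb{C}^*$-weight $-w_i$ inherited from $\cO(a_i)\subset E$. Consequently the tautological bundle splits as an equivariant direct sum $L^{[d]}|_{\fix_{\vec d}}=\bigoplus_i V_i$ with $V_i=\pi_*\!\left(p^*L\otimes\mathcal T_i(a_i)\right)$ of weight $-w_i$; note $L^{[d]}$ is genuinely a bundle because $\mathcal Q$ is relatively torsion, so $R^1\pi_*$ vanishes. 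Since $\wedge_y$ sends direct sums to tensor products, the Chern character factorizes as $\ch(\wedge_y L^{[d]})|_{\fix_{\vec d}}=\prod_i \ch(\wedge_y V_i)$, and it suffices to compute each factor.

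Next I would determine the class of $V_i$ in equivariant $K$-theory. Writing $\mathcal T_i=\cO-\mathcal K_i$ and using \eqref{ki} with the projection formula gives, in $K$-theory, $p^*L\otimes\mathcal T_i(a_i)=p^*L(a_i)-\bigl(\cO_{\mathbb P^1}(\ell+a_i-d_i)\boxtimes\cO_{\mathbb P^{d_i}}(-1)\bigr)$. Pushing forward along $\pi$ via K\"unneth/base change, so that $\pi_*$ of a box product is computed by the Euler characteristic on $\mathbb{P}^1$ (and the standing hypothesis $\ell+a_i\ge -1$ makes $R^1\pi_*$ of the first term vanish), I obtain that $V_i$ equals a trivial bundle of rank $a_i+\ell+1$ and weight $-w_i$, minus $a_i+\ell-d_i+1$ copies of $\cO_{\mathbb P^{d_i}}(-1)$ of weight $-w_i$. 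The two pieces have equivariant Chern roots $-w_i\epsilon$ and $-h_i-w_i\epsilon$ respectively, i.e.\ $e^{\mathrm{root}}=\alpha_i^{-1}$ and $e^{\mathrm{root}}=z_i^{-1}$, in the conventions $z_i=e^{h_i+w_i\epsilon}$, $\alpha_i=e^{w_i\epsilon}$ already fixed by the Todd-class computation.

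Finally I would apply $\wedge_y$ as a homomorphism from the additive to the multiplicative group of $K$-theory, turning the minus sign into a division: $\ch(\wedge_y V_i)=(1+y\alpha_i^{-1})^{a_i+\ell+1}\,(1+yz_i^{-1})^{-(a_i+\ell-d_i+1)}$. Rewriting $1+y\alpha_i^{-1}=(\alpha_i+y)/\alpha_i$ and $1+yz_i^{-1}=(z_i+y)/z_i$ and collecting the exponents of $(z_i/(z_i+y))$ yields exactly \eqref{chl}. For \eqref{chm} the argument is identical with $W_i=\pi_*\!\left(p^*M\otimes\mathcal T_i(a_i)\right)$, except that dualizing negates the Chern roots, replacing $\alpha_i^{-1},z_i^{-1}$ by $\alpha_i,z_i$; this produces $\ch\bigl((\wedge_x M^{[d]})^{\vee}\bigr)=\prod_i (1+\alpha_i x)^{a_i+m+1}\,(1+z_i x)^{-(a_i+m-d_i+1)}$, which rearranges to \eqref{chm}.

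I expect the main obstacle to be purely bookkeeping: keeping the $\mathbb{C}^*$-weights consistent with the conventions above, and justifying the manipulation of $\wedge_y$ on the \emph{virtual} class $V_i$. The latter is legitimate because $\lambda_y$ is a $\lambda$-ring homomorphism, so $\ch(\wedge_y V_i)$ of the genuine bundle $V_i$ is computed from its $K$-theory class as a ratio; no geometric input beyond \eqref{e5}, \eqref{ki}, and Riemann--Roch on $\mathbb{P}^1$ is required.
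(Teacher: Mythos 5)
Your proposal is correct and follows essentially the same route as the paper: the same equivariant splitting $L^{[d]}|_{\fix_{\vec d}}=\bigoplus_i \pi_*(\mathcal T_i\otimes p^*L(a_i))$ with weight $-w_i$, the same $K$-theoretic reduction via $\mathcal T_i=\mathcal O-\mathcal K_i$ and \eqref{ki} giving $\mathbb C^{a_i+\ell+1}\otimes\mathcal O_{\mathbb P^{d_i}}-\mathbb C^{a_i+\ell-d_i+1}\otimes\mathcal O_{\mathbb P^{d_i}}(-1)$, and the same final application of the multiplicativity $\wedge_y(V+W)=\wedge_yV\cdot\wedge_yW$. Your write-up merely makes explicit what the paper leaves to the reader (the Chern-root bookkeeping $\alpha_i^{-1},z_i^{-1}$, the role of the hypothesis $\ell+a_i+1\geq 0$, and the dualization for \eqref{chm}), so there is nothing to flag.
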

	\begin{proof}
		We only explain the first formula, the second assertion being entirely similar. We note that over $\textrm{F}_{\vec d}=\mathbb P^{d_1}\times \cdots \times \mathbb P^{d_N}$, the bundle $L^{[d]}$ splits as contributions coming from each factor $$L^{[d]}=\pi_{\star} (\mathcal Q\otimes p^{\star} L)=\bigoplus_{i\in [N]} \pi_{\star} (\mathcal T_i\otimes p^{\star} L(a_i)),$$ with each summand acted on with $\mathbb C^{\star}$-weight $-w_i$. In $K$-theory, we have by \eqref{ki} that $$\mathcal T_i=\mathcal O-\mathcal K_i=\mathcal O_{\mathbb P^1\times \mathbb P^{d_i}} - \mathcal O_{\mathbb P^1}(-d_i)\boxtimes \mathcal O_{\mathbb P^{d_i}}(-1).$$ This yields $$ \pi_{\star} (\mathcal T_i\otimes p^{\star} L(a_i))=\mathbb C^{a_i+\ell+1}\otimes \mathcal O_{\mathbb P^{d_i}}-\mathbb C^{a_i-d_i+\ell+1}\otimes \mathcal O_{\mathbb P^{d_i}}(-1).$$ The result follows immediately from here, using that $\wedge_y(V+W)=\wedge_y V\cdot \wedge_y W$ and accounting for all terms. \end{proof}
	\subsection{Proof of Theorem \ref{t1}} With the above ingredients in place, the key steps of the argument are as follows: 
	\begin{itemize}
	\item [(i)] after judiciously accounting for all localization terms, the fixed point contributions are summed using the Lagrange-B\"urmann formula;
	\item [(ii)] next, the answer is recast as a  quotient of suitable determinants. Schur polynomials evaluated at the roots of a certain algebraic equation arise at this step; 
	\item [(iii)] finally, an application of the Jacobi-Trudi formula to the Schur polynomials greatly simplifies the answer and gives the result. 
	\end{itemize}
	
	To begin, we substitute equations \eqref{toddeufinal} and \eqref{chl} into the localization expression \eqref{ab}. We obtain that $\chi(\quot_d,\wedge_y L^{[d]}) $ equals \begin{align}\label{weneed}\mathsf u
		\sum_{|\vec{d}|=d}\left[h_1^{d_1} \ldots h_N^{d_N}\right]&\bigg\{\prod_{i} \bigg(\frac{z_i(\alpha_i+y)}{\alpha_i(z_i+y)}\bigg)^{b_i}\bigg(\frac{z_i+y}{z_i}\bigg)^{d_i}\bigg(\frac{h_i}{R(z_i)} \bigg)^{d_i+1}z_i^{d+1}\bigg(\frac{R(z_i)}{\prod_{j}(z_j-\alpha_i)}\bigg)^{b_i}\\ & \cdot
	 \prod_{i<j}(z_i-z_j)^2\bigg\}\bigg{|}_{\epsilon=0}.\nonumber\end{align}
	Here, we wrote for simplicity $$b_i=a_i+\ell+1.$$ The brackets indicate extracting the coefficient of $h_1^{d_1} \ldots h_N^{d_N}$ in the relevant expression; this corresponds to integration over the  product of projective spaces $\textrm{F}_{\vec d}=\mathbb P^{d_1}\times \ldots \times \mathbb P^{d_N}.$ The equivariant parameter $\epsilon$ is set to $0$ at the end. 
	
	The rest of this section is dedicated to the explicit combinatorial manipulations (i)-(iii) which bring the above expression \eqref{weneed} into the form stated in Theorem \ref{t1}. We first apply the multivariable Lagrange-B\"urmann formula \cite{G}. The formulation we need in this case is as follows. Consider formal power series $\Phi_1(h_1), \ldots, \Phi_N(h_N)$ with $\Phi_i(0)\neq 0$, and consider a power series $\Psi(h_1, \ldots, h_N)$. We have 
\begin{equation}\label{formlb}\sum_{(d_1, \ldots, d_N)} q_1^{d_1} \cdots q_N^{d_N} \left[h_1^{d_1} \ldots h_N^{d_N}\right] \left(\Phi_1(h_1)^{d_1+1}\cdots \Phi_N^{d_N+1}(h_N) \cdot \Psi (h_1, \ldots, h_N)\right) =  \frac{\Psi}{J}\end{equation}  for the change of variables $$q_i=\frac{h_i}{\Phi_i(h_i)}$$ with Jacobian $$J= \frac{dq_1}{dh_1} \cdots \frac{dq_N}{dh_N}.$$ This formula will be used to derive equation \eqref{good} below. The intermediate calculations are straightforward; nonetheless, we record the details for completeness. 

Set $$\Phi_i(h_i)=\frac{h_i}{R(z_i)}\frac{z_i+y}{z_i},$$ and let 
$$\Psi=\mathsf u\,\prod_{i}\bigg(\frac{z_i(\alpha_i+y)}{\alpha_i(z_i+y)}\bigg)^{b_i}\bigg(\frac{z_i+y}{z_i}\bigg)^{-1}z_i^{d+1}\bigg(\frac{R(z_i)}{\prod_{j}(z_j-\alpha_i)}\bigg)^{b_i}\cdot
	 \prod_{i<j}(z_i-z_j)^2$$ be determined by the remaining terms in \eqref{weneed}. 
 \footnote{In the expression $\Psi$, we regard the exponent $d$ in the term $z_i^{d+1}$ as an independent parameter, foregoing for the moment the requirement that $d=d_1+\ldots+d_N$. The careful reader may wish to replace the term $z_i^{d+1}$ by a more general $z_i^{e+1}$ for $e\geq d$ in the proof below. This leads to \eqref{deref} written instead for the partition $\lambda_k=(e^N, k)$ or $(k-N, (e+1)^N)$. One then specializes back to the case of interest $e=d$ and continues by applying Lemma \ref{l4}.}  Due to the factor $z_i-\alpha_i$ in $R(z_i)$ which has a simple zero at $h_i=0$, we have $\Phi_i(0)\neq 0$. We apply \eqref{formlb} with $$q_1=\ldots=q_N=q.$$ Thus, letting $z_i$ be the root of the equation \begin{align*}
		q(z_i+y)=z_iR(z_i), \quad z_i\big{|}_{q=0}=\alpha_i,
	\end{align*}
and letting $h_i$ be determined by $z_i=\alpha_ie^{h_i}$, we have $q=\frac{h_i}{\Phi_i(h_i)}.$ It follows from \eqref{weneed} and \eqref{formlb} that $$\chi\left(\quot_d, \wedge_y L^{[d]}\right)=\left[q^d\right] \frac{\Psi}{J}\left(h_1(q), \ldots, h_N(q)\right).$$ Equivalently, $\chi\left(\quot_d, \wedge_y L^{[d]}\right)$ equals  
	\begin{align}\label{lag_bur}
\left[q^d\right] \mathsf {u} \prod_{i} \bigg(\frac{z_i(\alpha_i+y)}{\alpha_i(z_i+y)}\bigg)^{b_i}\bigg(\frac{z_i+y}{z_i}\bigg)^{-1}
		\frac{dh_i}{dq} z_i^{d+1} \bigg(\frac{R(z_i)}{\prod_{j}(z_j-\alpha_i)}\bigg)^{b_i} \cdot \prod_{i<j}(z_i-z_j)^2\bigg{|}_{\epsilon=0}.
	\end{align}

Consider the polynomial  
	\begin{equation}\label{eq:P(z)}
		P(z)=zR(z)-q(z+y).
	\end{equation}  Note that $P$ has degree $N+1$, so it admits $N+1$ roots, with $z_1, \ldots, z_N$ being $N$ of them. Let $z_{N+1}$ be the additional root of $P$ which satisfies $$z_{N+1}\big{|}_{q=0}=0.$$ We will greatly simplify \eqref{lag_bur} using the additional root $z_{N+1}$. 
	
	To this end, write $P(z)=(z-z_1)\cdots (z-z_{N+1})$. A simple calculation gives 
	\begin{align}\label{deriv}
		\frac{dq}{dh_i}=\frac{dq}{dz_i}\cdot \frac{dz_i}{dh_i}=z_i\frac{dq}{dz_i}= \frac{z_i}{z_i+y}P'(z_i).
	\end{align} Here, we used that $$q=\frac{z_i R(z_i)}{z_i+y}\implies \frac{dq}{dz_i}=\frac{R(z_i)}{z_i+y}+\frac{z_iR'(z_i)}{z_i+y}-\frac{z_iR(z_i)}{(z_i+y)^2}=\frac{P'(z_i)}{z_i+y},$$ where the definition of $P$ was necessary in the last equality. 
The terms in \eqref{lag_bur} with exponent $b_i$ further simplify since \begin{align}\label{simpl}
		\frac{z_i(\alpha_i+y)R(z_i)}{\alpha_i(z_i+y)\prod_{j=1}^{N}(z_j-\alpha_i)}=(-1)^N\frac{(z_{N+1}-\alpha_i)}{\alpha_i} \iff (\alpha_i+y) z_i R(z_i)=-(z_i+y)P(\alpha_i)
	\end{align}
	where the last identity holds true using \eqref{eq:P(z)} and the fact that $P(z_i)=0,$ $R(\alpha_i)=0$.	Substituting identities \eqref{deriv} and \eqref{simpl} into \eqref{lag_bur}, we obtain the expression
	\begin{equation}\label{good}
\left[ q^{d}\right] (-1)^{(N-1)d}\prod_{i=1}^{N}\bigg( \frac{\alpha_i-z_{N+1}}{\alpha_i}\bigg)^{b_i}\frac{z_i^{d+1}}{P'(z_i)}\cdot
		\prod_{1\le i\ne j\le N}(z_i-z_j)\bigg{|}_{\epsilon=0}.
	\end{equation}
	
	Having arrived at \eqref{good}, a new idea is needed to go further. Note that $$\prod_{i=1}^{N} P'(z_i)=\prod_{i=1}^{N}\prod_{\stackrel{j=1}{j\neq i}}^{N+1} (z_i-z_j)\implies 
	\prod_{i=1}^{N} \frac{1}{P'(z_i)}\cdot \prod_{1\le i\ne j\le N}(z_i-z_j)=\frac{\mathsf V_N}{\mathsf V_{N+1}}
	.$$ Here, we introduced the two Vandermonde determinants $$\mathsf V_N=\begin{vmatrix}
		z_1^{N-1}&\cdots& z_1&1\\
		z_2^{N-1}&\cdots &z_2&1\\
		\vdots&\cdots &\vdots &\vdots\\
		z_{N}^{N-1}&\cdots &z_{N}&1
	\end{vmatrix}, \quad \mathsf V_{N+1}=\begin{vmatrix}
		z_1^{N}&\cdots& z_1&1\\
		z_2^{N}&\cdots &z_2&1\\
		\vdots&\cdots &\vdots &\vdots\\
		z_{N+1}^{N}&\cdots &z_{N+1}&1
	\end{vmatrix}	.$$
We thus rewrite expression \eqref{good} in the form \begin{equation}\label{de}
	\left[q^d\right] \frac{(-1)^{(N-1)d}}{\mathsf V_{N+1}}\,\begin{vmatrix}
			z_1^{d+N}&z_1^{d+N-1}& \cdots& z_1^{d+1}\\
			z_2^{d+N}&z_2^{d+N-1}&\cdots &z_2^{d+1}\\
			\vdots& \vdots& \cdots &\vdots \\
			z_N^{d+N}&z_N^{d+N-1}&\cdots &z_N^{d+1}
		\end{vmatrix}\prod_{i=1}^{N}\bigg( \frac{\alpha_i-z_{N+1}}{\alpha_i}\bigg)^{b_i}\bigg{|}_{\epsilon=0}. 
	\end{equation}
	
	Next, recall that $[q^0]z_{N+1}=0$, hence we may add terms which are multiples of $z_{N+1}^{d+1}$ without changing the coefficient of $q^d$. Using this observation, we enlarge the determinant in the numerator by adding one more row and column. The answer is recast as the quotient of two determinants of size $(N+1)\times (N+1)$: \begin{align}\label{de2}
		\left[\epsilon^0 q^d\right]\frac{(-1)^{(N-1)d}}{\mathsf {V}_{N+1}}\,
		\begin{vmatrix}
			z_1^{d+N}& z_1^{d+N-1}&\cdots & z_1^{d+1}& \prod_{i=1}^{N}(\frac{\alpha_i-z_1}{\alpha_i})^{b_i}\\
			z_2^{d+N}& z_2^{d+N-1}&\cdots & z_2^{d+1}& \prod_{i=1}^{N}(\frac{\alpha_i-z_2}{\alpha_i})^{b_i}\\
			\vdots& \vdots &\cdots & \vdots& \vdots
			\\
			z_{N+1}^{d+N}& z_{N+1}^{d+N-1}&\cdots & z_{N+1}^{d+1}& \prod_{i=1}^{N}(\frac{\alpha_i-z_{N+1}}{\alpha_i})^{b_i}
		\end{vmatrix}.
	\end{align}
This does not change expression \eqref{de}. Indeed, expanding along the last row, the first entries do not contribute by the above reasoning, while the rightmost corner contribution matches \eqref{de}. The assumption $\deg L+a_i+1\geq 0$ made in the beginning of this section is also used here. This condition rewrites as $b_i\geq 0$, so the terms we added on the last column do not contribute poles at $q=0$.  

Expression \eqref{de2} is symmetric in the roots $z_1, \ldots, z_{N+1}$ of $P(z)$. The answer can be rewritten in terms of the elementary symmetric functions in the $z_i$'s which depend polynomially on the $\alpha_i$'s. Thus \eqref{de2} is a rational fraction in the $\alpha$'s, with denominator $\prod_{i=1}^{N} \alpha_i^{b_i}$ (coming from the last column). Since we are interested in the coefficient of $\epsilon^0$, since the elementary symmetric functions depend continuously on the $\alpha$'s, we may substitute $\alpha_i=1$, noting that there are no poles in \eqref{de2} at these values. 

After the substitution, the $z_i$'s solve $z(z-1)^N-q(z+y)=0$. Furthermore, since $$\sum_{i=1}^{N} b_i=\chi(E\otimes L):=\chi,$$ the entries in the last column of the determinant \eqref{de2} become $$(1-z_i)^{\chi}=\sum_{k\geq 0} (-1)^{k}\binom{\chi}{k} z_i^{k}.$$ 

Expanding the determinant along the last column yields sums over Schur polynomials. Specifically, we obtain \begin{equation}\label{deref}\left[q^d\right] \sum_{k\geq 0} (-1)^{(N-1)d+k}\binom{\chi}{k} s_{\lambda_k}(z_1, \ldots, z_{N+1}).\end{equation} Here, we set $$\lambda_k=(d^N,k)=(d, \ldots, d, k),$$ and $s_{\lambda_k}(z_1,\dots z_{N+1})$ denotes the corresponding Schur polynomial, when $k\leq d$. The terms for $d<k\leq d+N$ have vanishing contribution due to repeating columns in the determinant.
To account for the ordering of the exponents, the shape of the partition changes when $k>d+N$. In all cases, we find $$\lambda_k=\begin{cases} (d^N, k) & \text{if } k\leq d\\ (k-N, (d+1)^N)&\text{if } k>d+N\end{cases}.$$ The lemma below identifies the coefficient of $q^d$ in $s_{\lambda_k}(z_1, \ldots, z_{N+1})$. We obtain 
	\begin{align*}
		\chi(\quot_d, \wedge_yL^{[d]})= \sum_{k=0}^{d}\binom{\chi}{k} y^{k}= \left[q^d\right](1+qy)^{\chi}(1-q)^{-1}.
	\end{align*}
This completes the proof of Theorem \ref{t1} in genus $0$ under the assumption $b_i\geq 0$ for all $1\leq i\leq N$. 
	
	\begin{lemma}\label{l4}
		We have $$\left[q^d\right]s_{\lambda_k}(z_1,\dots,z_{N+1})=\begin{cases}
			(-1)^{d(N-1)}(-y)^k & \text{if }k\le d\\
			0&\text{if }k>d+N
		\end{cases}.$$
	\end{lemma}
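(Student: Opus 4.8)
The plan is to work entirely with the elementary symmetric functions of the roots $z_1,\dots,z_{N+1}$ of the polynomial $P(z)=z(z-1)^N-q(z+y)$ obtained after the substitution $\alpha_i=1$. Writing $P(z)=\prod_{i=1}^{N+1}(z-z_i)=\sum_{m=0}^{N+1}(-1)^m e_m\,z^{N+1-m}$ and expanding $z(z-1)^N$ by the binomial theorem, I would first read off
$$e_m=\binom{N}{m}\ \ (0\le m\le N-1),\qquad e_N=1-(-1)^N q,\qquad e_{N+1}=(-1)^N qy,$$
with $e_m=0$ for $m>N+1$. Two features drive the whole argument: first, each $e_m$ is a polynomial in $q$ of degree at most $1$, whose linear term is nonzero only for $m\in\{N,N+1\}$; second, the product of the roots is $z_1\cdots z_{N+1}=e_{N+1}=(-1)^N qy$, which already carries one power of $q$ (equivalently, $P(0)=-qy$).

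Next I would reduce both cases to rectangular or one-row Schur polynomials by pulling out the product of the variables. Since $\lambda_k$ has exactly $N+1$ parts with smallest part $\lambda_{N+1}$, the bialternant identity $s_\lambda=(z_1\cdots z_{N+1})^{\lambda_{N+1}}\,s_\mu$, with $\mu_i=\lambda_i-\lambda_{N+1}$, applies. For $k\le d$ the smallest part is $k$, so $s_{\lambda_k}=\big((-1)^N qy\big)^k\,s_{((d-k)^N)}(z_1,\dots,z_{N+1})$, which reduces the problem to the single coefficient $[q^{d-k}]\,s_{((d-k)^N)}$. For $k>d+N$ the smallest part is $d+1$ and the complementary partition is a single row, so $s_{\lambda_k}=\big((-1)^N qy\big)^{d+1}\,s_{(k-N-d-1)}$, where $s_{(k-N-d-1)}$ is the complete homogeneous symmetric function; being a polynomial in the $e_m$, it is a polynomial in $q$ with no pole at $q=0$, so the whole expression is divisible by $q^{d+1}$ and the vanishing $[q^d]s_{\lambda_k}=0$ follows at once. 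This disposes of the second case with essentially no computation.

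It then remains to evaluate $[q^e]\,s_{(e^N)}$ for a rectangle, which I would do with the dual Jacobi--Trudi formula $s_{(e^N)}=\det\big(e_{N-i+j}\big)_{1\le i,j\le e}$, using $(e^N)'=(N^e)$. Because this determinant has size $e$ and every entry has $q$-degree at most $1$, its coefficient of $q^e$ receives contributions only from permutations in which \emph{every} selected entry is $q$-linear, i.e. has index in $\{N,N+1\}$; this forces $\sigma(i)\in\{i,i+1\}$ for all $i$, and a short induction starting from $\sigma(e)=e$ shows that the identity is the only such permutation. Hence $[q^e]s_{(e^N)}=\big([q^1]e_N\big)^e=\big((-1)^{N+1}\big)^e$, and substituting back gives $[q^d]s_{\lambda_k}=(-1)^{Nk}y^k\,(-1)^{(N+1)(d-k)}$, which equals $(-1)^{(N-1)d}(-y)^k$ since the two sign exponents differ by $2(d-k)$. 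The boundary value $k=0$ is the special case where $(z_1\cdots z_{N+1})^0=1$ and the rectangular computation applies directly.

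The only genuinely delicate point is setting up the extraction of $[q^d]$ so that it becomes the count of a single permutation. The combination of the column-pulling identity with the dual Jacobi--Trudi presentation is precisely what turns the problem into the statement that every factor in a surviving permutation term must contribute its $q$-linear part; once this is in place, uniqueness of the identity permutation and the ensuing sign bookkeeping are routine. I would take particular care with the parity reconciliation and with verifying that $s_{(m)}$ indeed has nonnegative $q$-valuation, which is what makes the vanishing case clean.
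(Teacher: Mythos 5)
Your proof is correct, and it is built from the same ingredients as the paper's: the elementary symmetric functions of the roots (your values agree with the paper's, since $1-(-1)^Nq = 1+(-1)^{N-1}q$), the key observation that each $e_j$ is at most linear in $q$ with nonzero linear term only for $j\in\{N,N+1\}$, and the dual Jacobi--Trudi determinant. The organization, however, differs in a genuine way. The paper applies Jacobi--Trudi directly to $\lambda_k$, obtaining a $d\times d$ determinant whose $q$-linearization is block triangular, so that $[q^d]s_{\lambda_k}=[q^d]\,e_{N+1}^{k}e_N^{d-k}$; for $k>d+N$ it asserts that the same analysis goes through for the modified conjugate partition. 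You instead first pull out the smallest part $m$ of $\lambda_k$ via the bialternant identity, $s_{\lambda_k}=(z_1\cdots z_{N+1})^{m}s_\mu=e_{N+1}^{m}s_\mu$. This buys two simplifications: for $k\le d$ the problem reduces to the rectangle $s_{((d-k)^N)}$, where extracting the top $q$-coefficient from the $(d-k)\times(d-k)$ determinant forces $\sigma(i)\in\{i,i+1\}$ and the downward induction from $\sigma(e)=e$ leaves only the identity permutation (this replaces the paper's block-triangular reduction); and for $k>d+N$ the vanishing is immediate, since $s_{\lambda_k}=e_{N+1}^{d+1}h_{k-N-d-1}$ is manifestly divisible by $q^{d+1}$, with no determinant analysis at all -- a cleaner disposal of the case the paper handles by analogy. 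Your prefactoring is in effect what the paper's lower-triangular $k\times k$ block accomplishes, so the two arguments are close cousins, but yours trades the larger determinant for the extra factoring identity. Your sign bookkeeping is also right: the exponents $Nk+(N+1)(d-k)$ and $(N-1)d+k$ differ by $2(d-k)$, so both give $(-1)^{(N-1)d}(-y)^k$.
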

	\begin{proof}
		Since the $z_i$'s are the roots of the polynomial $P(z)=z(z-1)^N-q(z+y)$, the elementary symmetric functions in $z_1, \ldots, z_{N+1}$ are
		\begin{align*}
			e_j=\begin{cases}
				\binom{N}{j} &\text{if }j\ne N,N+1\\
				1+(-1)^{N-1}q &\text{if } j=N\\
				(-1)^Nqy &\text{if }j=N+1.\\
			\end{cases}
		\end{align*}
		Assume $k\leq d$ so that $\lambda_k=(d^N, k)$. The Jacobi-Trudi formula expresses the Schur polynomial as a $d\times d$ determinant in the elementary symmetric functions. The entries are dictated by the conjugate partition $\lambda_k'=((N+1)^k, N^{d-k}),$ so that
		\begin{align}\label{jt}
				s_{\lambda_k}= \left|\begin{array}{ccccccccccc}
				e_{N+1}& 0&0&\cdots &0& \rvline&\quad0 &\cdots &0&0\\
				e_{N}&e_{N+1}&0&\cdots &0&\rvline&\quad 0&\cdots  &0&0\\
				e_{N-1}&e_{N}&e_{N+1}&\cdots & 0&\rvline&\quad 0 &\cdots &0&0\\
				\vdots & \vdots &\vdots&\cdots&\vdots &\rvline&\,\,\,\,\;\;\vdots&\cdots&\vdots&\vdots&\\
				e_{N-k+2}&e_{N-k+3}&e_{N-k+4} &\cdots & e_{N+1}& \rvline & \quad 0&\cdots &0&0\\
				&&&&&\rvline&\quad\quad\,\,\,&&&\\
				\hline
				e_{N-k}&e_{N-k+1}& e_{N-k+2} &\cdots &e_{N-1}&\rvline& \quad e_N & \cdots &0&0\\
				\vdots&\vdots&\vdots&\cdots&\vdots&\rvline& \vdots&\cdots&\vdots &\vdots\\
				e_{N-d+2}&e_{N-d+3}&e_{N-d+4}&\cdots& e_{N-d+k+1}&\rvline&\quad e_{N-d+k+2}&\cdots&e_{N} &e_{N+1}\\
				e_{N-d+1}&e_{N-d+2}&e_{N-d+3}&\cdots&e_{N-d+k}&\rvline& \quad e_{N-d+k+1}&\cdots&e_{N-1} &e_N\\
			\end{array}\right|.
		\end{align}
		Each of the $e_j$'s is at most linear in $q$. Since the determinant has size $d$, extracting the $q^d$ coefficient is immediate. In fact, we can replace the $e_j$'s by their linear terms in $q$; these are zero unless $j=N$ or $j=N+1$. We obtain that 
		\begin{align*}
			\left[q^d\right] s_{\lambda_k}= \left[q^d\right]\left|\begin{array}{cccccc}\begin{matrix}
				e_{N+1}& 0&0&\cdots &0 &0\\
				e_{N}&e_{N+1}&0&\cdots &0 &0\\
				\vdots &\vdots &\vdots &\cdots &\vdots &\vdots\\
				0& 0 & 0 & \cdots & e_{N}&e_{N+1}\end{matrix}
				& \rvline & \bigzero\\
				\hline
				\bigzero & \rvline &
				\begin{matrix}
				e_{N}& e_{N+1}&0&\cdots &0 &0\\
				0&e_{N}&e_{N+1}&\cdots &0 &0\\
				\vdots &\vdots &\vdots &\cdots &\vdots &\vdots\\
				0& 0 & 0 & \cdots & 0&e_{N}\end{matrix}
			\end{array}\right|.
		\end{align*}
Thus, 		 
		\begin{align*}
			[q^d]s_{\lambda_k}=\left[q^d\right]e_{N+1}^{k} e_{N}^{d-k}=(-1)^{kN+(d-k)(N-1)}y^k.
		\end{align*}
		The case $k>d+N$ changes the conjugate partition $\lambda'_k$, but the reasoning is identical. 
		\end{proof}
	\subsection{Proof of Theorem \ref{t2}}
	A similar but slightly more involved argument yields Theorem \ref{t2} in genus $0$ when $b_i\geq 0$ for all $1\leq i\leq N$.  
	Specifically, we prove that 
\begin{equation}\label{t2eq} \chi\left(\quot_d, \wedge_yL^{[d]}\otimes_{p=1}^{r}\left(\wedge_{x_p} M_p^{[d]}\right)^{\vee}\right)= \left[q^{d}\right](1-q)^{-1}(1+qy)^{\chi(E\otimes L)}\prod_{p=1}^{r}(1-x_p y q)^{-\chi(L\otimes M_p^\vee)}.\end{equation} We indicate some of the steps. 
	
	Just as in Theorem \ref{t1}, we begin by applying Hirzebruch-Riemann-Roch followed by Atiyah-Bott localization:
	\begin{equation}\label{gen}
		\chi\left(\quot_d, \wedge_yL^{[d]}\otimes_{p=1}^{r}\left(\wedge_{x_p} M_p^{[d]}\right)^{\vee}\right) = \sum_{\vec{d}}\int_{\fix_{\vec{d}}}
		\ch(\wedge_y L^{[d]})\prod_{p=1}^{r}\ch\left(\left(\wedge_{x_p} M_p^{[d]}\right)^\vee\right)\frac{\td(\quot_d)}{e_{\mathbb{C}^*}(\textrm{N}_{\vec d})}\bigg|_{\fix_{\vec{d}}}.
	\end{equation}
	All terms that appear here have been computed in the previous subsections. Using \eqref{toddeufinal}, \eqref{chl} and \eqref{chm}, we rewrite \eqref{gen} as	
	\begin{align*}
		\mathsf {u} \sum_{|\vec{d}|=d}\left[h_1^{d_1}\cdots h_N^{d_N}\right]&\,\bigg\{\,\prod_{i=1}^{N}\,\,\bigg( \bigg(\frac{z_i(\alpha_i+y)}{\alpha_i(z_i+y)}\bigg)^{b_i}\bigg(\frac{z_i+y}{z_i}\bigg)^{d_i}\prod_{p=1}^{r}\bigg(\frac{1+\alpha_ix_p}{1+z_ix_p}\bigg)^{a_i+m_p+1}(1+z_ix_p)^{d_i}\bigg(\frac{h_i}{R(z_i)} \bigg)^{d_i+1}\\
		&z_i^{d+1} \bigg(\frac{R(z_i)}{\prod_{j=1}^{N}(z_j-\alpha_i)}\bigg)^{b_i}\bigg) \cdot \prod_{1\leq i<j\leq N}(z_i-z_j)^2\bigg\}\,\bigg{|}_{\epsilon=0}
	\end{align*}
	where $b_i=a_i+\ell+1$ and $\mathsf u=(-1)^{(N-1)(d+\sum b_i)+\binom{N}{2}}$. Here, we set $m_p=\deg M_p$. 
	
	Next, the Lagrange-B\"urmann formula with the change of variables 
	\begin{align}\label{chvar}
		q(z_i+y)\prod_{p=1}^{r}(1+z_ix_p)=z_iR(z_i)
	\end{align}
	turns \eqref{gen} into the following unwieldy expression
	\begin{align*}\label{eq:lag_bur}
		\left[q^d\right] \,\mathsf u\,\, \prod_{i=1}^N&\left[\bigg(\frac{z_i(\alpha_i+y)}{\alpha_i(z_i+y)}\bigg)^{b_i} \prod_{p=1}^{r}\bigg(\frac{1+\alpha_ix_p}{1+z_ix_p}\bigg)^{a_i+m_p+1}\bigg(\frac{z_i+y}{z_i}\prod_{p=1}^{r}(1+z_ix_p)\bigg)^{-1}
		\frac{dh_i}{dq} z_i^{d+1} \bigg(\frac{R(z_i)}{\prod_{j}(z_j-\alpha_i)}\bigg)^{b_i}\right] \\ &\cdot\prod_{1\leq i<j\leq N}(z_i-z_j)^2\bigg{|}_{\epsilon=0}.
	\end{align*}
However, there are further simplifications. To this end, we define the polynomial \[P(z)=zR(z)-q(z+y)\prod_{p=1}^{r}(1+zx_p). \] Since $r\leq N-1$, the degree of $P$ is $N+1$, so there is an additional root $z_{N+1}$ for $P$. Following the same steps that led to \eqref{good}, we simplify the above expression to 
	\begin{equation}\label{expr}
		\left[q^d\right](-1)^{(N-1)d} f(z_{N+1})\,\prod_{i=1}^{N}\,\frac{z_i^{d+1}}{P'(z_i)}\prod_{1\leq i\neq j\leq  N}(z_i-z_j)\bigg{|}_{\epsilon=0}
	\end{equation}
	where $$f(z)=\prod_{p=1}^{r}(1+zx_p)^{m_p-\ell}\prod_{i=1}^{N}\left(\frac{\alpha_i-z}{\alpha_i}\right)^{b_i}.$$
We record the details of the simplification in the lemma below; the reader can also skip directly to \eqref{degen}. 
	\begin{lemma} We have 
\begin{align}\label{eq:der}
		\bigg(\frac{z_i+y}{z_i}\prod_{p=1}^{r}(1+z_ix_p)\bigg)\frac{dq}{dh_i} =P'(z_i)
	\end{align}
and \begin{equation}\label{eq:simpli_2}
		\prod_{i=1}^{N}\left(\bigg(\frac{z_i(\alpha_i+y)}{\alpha_i(z_i+y)}\, \frac{R(z_i)}{\prod_{j=1}^{N}(z_j-\alpha_i)}\bigg)^{b_i}\prod_{p=1}^{r}\bigg(\frac{1+\alpha_ix_p}{1+z_ix_p}\bigg)^{a_i+m_p+1}\right)=(-1)^{(N-1)\sum b_i} f(z_{N+1}).
	\end{equation}

\end{lemma}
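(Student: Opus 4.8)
The plan is to verify the two stated identities \eqref{eq:der} and \eqref{eq:simpli_2} by direct computation, closely following the template already established in the proof of Theorem \ref{t1} (identities \eqref{deriv} and \eqref{simpl}). For \eqref{eq:der}, I would start from the change of variables \eqref{chvar}, namely $q = \frac{z_i R(z_i)}{(z_i+y)\prod_p (1+z_i x_p)}$, and differentiate. Exactly as in the derivation of \eqref{deriv}, the chain rule gives $\frac{dq}{dh_i} = z_i \frac{dq}{dz_i}$ since $z_i = \alpha_i e^{h_i}$. The content is then to show that $\left(\frac{z_i+y}{z_i}\prod_p(1+z_i x_p)\right) \cdot z_i \frac{dq}{dz_i} = P'(z_i)$. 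Differentiating $P(z) = zR(z) - q(z+y)\prod_p(1+zx_p)$ directly and then using $P(z_i)=0$ to eliminate the term proportional to $q$ should collapse the expression to $P'(z_i)$; this is the same mechanism by which the $\frac{z_i R(z_i)}{(z_i+y)^2}$-type term was absorbed in Theorem \ref{t1}.

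For \eqref{eq:simpli_2}, the key is the factorwise identity generalizing \eqref{simpl}. The plan is to evaluate $P(\alpha_i)$ using $R(\alpha_i)=0$, which gives $P(\alpha_i) = -q(\alpha_i+y)\prod_p(1+\alpha_i x_p)$, and separately use the change of variables \eqref{chvar} at $z=z_i$ to express $q$ in terms of $z_i$. Combining these, together with $P(z) = \prod_{k=1}^{N+1}(z-z_k)$ evaluated at $z=\alpha_i$ and $R(z_i) = \prod_{j=1}^N (z_i - \alpha_j)$, I expect the ratio $\frac{z_i(\alpha_i+y)}{\alpha_i(z_i+y)}\frac{R(z_i)}{\prod_j(z_j-\alpha_i)}$ to simplify to a clean multiple of $\frac{\alpha_i - z_{N+1}}{\alpha_i}$, up to a power of $(-1)$ and the correction factors $\prod_p \frac{1+\alpha_i x_p}{1+z_i x_p}$ carried by the dualized tautological bundles. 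These correction factors are precisely what produce the $(1+zx_p)^{m_p-\ell}$ term in $f$; tracking the exponents $a_i + m_p + 1$ against $b_i = a_i + \ell + 1$ accounts for the shift $m_p - \ell$.

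The main obstacle will be the careful bookkeeping of signs and exponents rather than any conceptual difficulty. Specifically, I would need to check that the product over $i$ of the per-factor signs assembles into the global $(-1)^{(N-1)\sum b_i}$, and that the extra factors $\prod_p (1+z_i x_p)^{\cdots}$ combine across all $i$ and across the additional root $z_{N+1}$ to yield exactly $\prod_p(1+z_{N+1}x_p)^{m_p-\ell}$ inside $f(z_{N+1})$. This requires using the symmetric-function relation $\prod_{k=1}^{N+1}(1+z_k x_p) = $ (value determined by the coefficients of $P$) to convert the spurious product over $z_1,\dots,z_N$ of $(1+z_i x_p)$-factors into a single $z_{N+1}$-factor, analogous to how $\prod_{i=1}^N \frac{R(z_i)}{\prod_j(z_j-\alpha_i)} = 1$ was used in Theorem \ref{t1}. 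Once both lemma identities are in place, substituting them into the unwieldy Lagrange-B\"urmann expression and cancelling the $\frac{z_i+y}{z_i}\prod_p(1+z_i x_p)$ factors against $\frac{dh_i}{dq}$ via \eqref{eq:der} yields \eqref{expr}, completing the proof.
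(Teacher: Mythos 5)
Your proposal is correct and follows essentially the same route as the paper: differentiating the change of variables \eqref{chvar} and using $P(z_i)=0$ for \eqref{eq:der}, and for \eqref{eq:simpli_2} combining $z_iR(z_i)=q(z_i+y)\prod_p(1+z_ix_p)$ with $P(\alpha_i)=-q(\alpha_i+y)\prod_p(1+\alpha_ix_p)$ (via $R(\alpha_i)=0$ and $P(z)=\prod_k(z-z_k)$), tracking the exponent shift $a_i+m_p+1-b_i=m_p-\ell$, and converting $\prod_{i=1}^{N}\frac{1+\alpha_ix_p}{1+z_ix_p}$ into $1+z_{N+1}x_p$ through the values of $R$ and $P$ at $z=-1/x_p$. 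All the key identities you list are exactly those used in the paper's own argument, so the plan is sound as stated.
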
 
\proof Equation \eqref{eq:der} follows by differentiating the expression for $q$ given in \eqref{chvar}. For \eqref{eq:simpli_2}, recall $b_i=a_i+\ell+1$, and use the following identities
	\begin{align*}
		z_iR(z_i)&=q(z_i+y)\prod_{p=1}^{r}(1+z_ix_p),\\
		\prod_{j=1}^{N}(z_j-\alpha_i)&=(-1)^{N+1}\frac{P(\alpha_i)}{z_{N+1}-\alpha_i}=(-1)^Nq\frac{(\alpha_i+y)\prod_{p=1}^{r}(1+\alpha_ix_p)}{z_{N+1}-\alpha_i}.
	\end{align*}
In the last line we used the definition of $P$ and the fact that $R(\alpha_i)=0.$ Then \eqref{eq:simpli_2} becomes
	\begin{equation*}
		\prod_{i=1}^{N}\left(\bigg((-1)^N\frac{(z_{N+1}-\alpha_i)}{\alpha_i}\bigg)^{b_i}\prod_{p=1}^r\bigg(\frac{1+\alpha_ix_p}{1+z_ix_p}\bigg)^{m_p-\ell}\right) .
	\end{equation*}
Finally, recalling that $\alpha_i$ and $z_i$ are roots of $R$ and $P$, for each fixed $p$ we have 
	\begin{align*}
		\prod_{i=1}^{N}\frac{1+\alpha_ix_p}{1+z_ix_p}
		= \frac{R\left(-1/x_p\right)}{P\left(-1/x_p\right)}\left(-\frac{1}{x_p}-z_{N+1}\right)
		= (1+z_{N+1}x_p). 
	\end{align*} 
In the last equality, we used again the definition of $P$ in terms of $R$. The lemma follows from here. \qed
\vskip.1in
	Having arrived at \eqref{expr}, by the same reasoning as in \eqref{de} we rewrite the answer as the quotient of two determinants \begin{equation}\label{degen}
		\left[\epsilon^0 q^{d}\right] \frac{(-1)^{(N-1)d}}{\det(z_i^{N-j+1})}\,\begin{vmatrix}
			z_1^{d+N}&z_1^{d+N-1}&\cdots& z_1^{d+1}\\
			z_2^{d+N}&z_2^{d+N-1}&\cdots &z_2^{d+1}\\
			\vdots&\vdots &\cdots & \vdots\\
			z_N^{d+N}&z_N^{d+N-1}&\cdots &z_N^{d+1}
		\end{vmatrix}f(z_{N+1}).
	\end{equation}
	The denominator is the Vandermonde determinant of size $(N+1) \times (N+1)$, while the numerator has size $N \times N$. Using the previous arguments, in particular that $[q^0]z_{N+1}=0$, we enlarge the determinant appearing in the numerator of \eqref{degen} by adding one row and one column:
	\begin{align}\label{det2}
		[\epsilon^0 q^d]\frac{(-1)^{(N-1)d}}{\det(z_i^{N-j+1})}\,
		\begin{vmatrix}
			z_1^{d+N}& z_1^{d+N-1}&\cdots & z_1^{d+1}&  f(z_{1})\\
			z_2^{d+N}& z_2^{d+N-1}&\cdots & z_2^{d+1}& f(z_{2})\\
			\vdots& \vdots &\cdots & \vdots& \vdots
			\\
			z_{N+1}^{d+N}& z_{N+1}^{d+N-1}&\cdots & z_{N+1}^{d+1}& f(z_{N+1})
		\end{vmatrix}.
	\end{align}
Since \eqref{det2} is symmetric in $z_i's$, it can be written as a rational function in the $\alpha_i$'s whose denominator equals $\prod_{i=1}^{N} \alpha_i^{b_i}$ coming from the denominator of $f$. The substitution $\alpha_i=1$ therefore makes sense. After this substitution, the last column can be rewritten in terms of 
$$f(t)|_{\alpha_i=1}=\prod_{p=1}^{r}(1+x_p t)^{m_p-\ell}\cdot \left(1-t\right)^{\chi}$$ for the values $t=z_1, z_2, \ldots, z_{N+1}$. Here $\chi=\chi(E\otimes L).$ We expand $f(t)$ into powers $t^k$, and then we expand the determinant \eqref{det2} along the last column yielding 
	\begin{equation}\label{det3}
		(-1)^{(N-1)d}\sum_{k\geq 0}\left[t^k\right]f(t)\cdot\left[q^{d}\right]s_{\lambda_k} (z_1, \ldots, z_{N+1}),
	\end{equation}
for the partition $$\lambda_k=\begin{cases} (d^N, k) & \text{if }k\leq d\\ (k-N, (d+1)^N) &\text{if }k>d+N.\end{cases}$$ 
	By Lemma \ref{lem:schur_cal} below, for $k\leq d$ we have 
	\[\left[q^{d}\right]s_{\lambda_k} (z_1, \ldots, z_{N+1})= (-1)^{(N-1)d}(-y)^k\left[t^{d-k}\right]\frac{1}{(1-t)(1-x_1yt)\cdots (1-x_ryt)}.  \]
Substituting the last formula into \eqref{det3}, we obtain that \begin{align*}
	\chi&\left(\quot_d, \wedge_yL^{[d]}\otimes_{p=1}^{r}\big(\wedge_{x_p} M_p^{[d]} \big)^{\vee}\right)\\
			&=\sum_{k=0}^{d}\left [t^k\right]\left((1-t)^{\chi(E\otimes L)}\prod_{p=1}^{r}(1+x_pt)^{m_p-\ell}\right)\cdot (-y)^k\left[t^{d-k}\right]\frac{1}{(1-t)\prod_{p=1}^{r}(1-x_pyt)}\\
		&= 	\sum_{k=0}^d \left[t^k\right]\left((1+yt)^{\chi(E\otimes L)}\prod_{p=1}^{r}(1-x_pyt)^{m_p-\ell}\right)\cdot\left [t^{d-k}\right]\frac{1}{(1-t)\prod_{p=1}^{r}(1-x_pyt)}
		\\&=\left[t^{d}\right]\frac{(1+yt)^{\chi(E\otimes L)}}{(1-t)\prod_{p=1}^{r}(1-x_p y t)^{\chi(L\otimes M_p^\vee)}}.
	\end{align*}
This completes the proof of \eqref{t2eq} and of Theorem \ref{t2} in genus $0$ when $b_i\geq 0$ for all $i$. 
	
	\subsection{Schur polynomials}
	Let $z_1,\dots z_{N+1}$ be $N+1$ roots of $$P(z)=z(z-1)^N-q(z+y)(1+zx_1)\cdots (1+zx_r),$$
	where $0\le r\le N-1$. We show
	
	\begin{lemma}\label{lem:schur_cal}
		For the partition $\lambda_k$ above, and $k\leq d$, we have \[\left[q^d\right]s_{\lambda_k}(z_1,\dots,z_{N+1})=
		(-1)^{(N-1)d}(-y)^k\left[t^{d-k}\right]\frac{1}{(1-t)(1-x_1yt)\cdots (1-x_ryt)}.
		\] If $k>d+N$, the coefficient vanishes.  
	\end{lemma}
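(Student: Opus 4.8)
The plan is to compute $\left[q^d\right]s_{\lambda_k}(z_1,\dots,z_{N+1})$ by the same Jacobi-Trudi strategy used in Lemma \ref{l4}, now accounting for the extra factor $(1+zx_1)\cdots(1+zx_r)$ in the polynomial $P$. First I would record the elementary symmetric functions of the roots $z_1,\dots,z_{N+1}$. Since $P(z)=z(z-1)^N-q(z+y)\prod_{p=1}^r(1+zx_p)$ is monic of degree $N+1$, the $e_j$ are read off from its coefficients. The key structural point is that the $q$-independent part $z(z-1)^N$ contributes the binomial coefficients $\binom{N}{j}$ (as in Lemma \ref{l4}), while the $q$-linear correction coming from $-q(z+y)\prod_p(1+zx_p)$ distributes a single power of $q$ across $e_N,e_{N+1},\dots,e_{N+1+r}$ according to the coefficients of $(z+y)\prod_p(1+zx_p)$. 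Crucially, every $e_j$ remains \emph{at most linear} in $q$, which is what makes the coefficient extraction tractable.

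Next I would invoke Jacobi-Trudi exactly as in \eqref{jt}: for $k\le d$ the conjugate partition is $\lambda_k'=((N+1)^k,N^{d-k})$, so $s_{\lambda_k}$ is a $d\times d$ determinant whose entries are the $e_j$'s, with column indices shifted so that the relevant $e_j$ for coefficient extraction are those with $j\ge N$. Since the determinant has size $d$ and each entry is at most linear in $q$, extracting $[q^d]$ forces us to pick the $q$-linear term from every factor in the expansion — so I may replace each $e_j$ by its $q$-linear part, which is nonzero only for $N\le j\le N+1+r$. This reduces the determinant to one built from the $q$-linear coefficients alone; these are $e_N\rightsquigarrow (-1)^{N-1}$ and, for $0\le s\le r+1$, $e_{N+s}\rightsquigarrow (-1)^N[t^s]\big((1+yt)\prod_p(1+x_pt)\big)$ up to the appropriate sign bookkeeping. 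The determinant then becomes banded/lower-triangular in structure, and I expect it to collapse to a generating-function coefficient: the $(-1)^{(N-1)d}$ prefactor and the $(-y)^k$ emerge from the diagonal, while the off-diagonal band produces exactly the denominator $(1-t)(1-x_1yt)\cdots(1-x_ryt)$ after the substitution $t\mapsto$ (the transfer-matrix variable), matching Lemma \ref{l4} in the case $r=0$.

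Concretely, I would recognize the reduced determinant as the coefficient in a one-variable transfer-matrix recursion: a banded Toeplitz-type determinant of size $d$ with subdiagonal entries governed by the polynomial $(1+yt)\prod_p(1+x_pt)$ equals $[t^{d-k}]$ of the reciprocal of that polynomial (after the sign change $x_p t\mapsto -x_p y t$ induced by the $(-y)^k$ factoring), i.e. $[t^{d-k}]\big((1-t)(1-x_1yt)\cdots(1-x_ryt)\big)^{-1}$. The sign $(-1)^{(N-1)d}$ is tracked exactly as in Lemma \ref{l4}, where each of the $d$ band-entries carries a sign $(-1)^{N-1}$ or $(-1)^N$ and the total count is reorganized using $k+(d-k)=d$. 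The case $k>d+N$ is handled identically to Lemma \ref{l4}: the conjugate partition changes to $\lambda_k'$ with the blocks reversed, but the determinant again has a repeated-structure vanishing, giving coefficient $0$.

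The main obstacle I anticipate is the combinatorial bookkeeping in the banded determinant: unlike the $r=0$ case of Lemma \ref{l4}, where only $e_N$ and $e_{N+1}$ survive and the determinant factors immediately as $e_{N+1}^k e_N^{d-k}$, here up to $r+2$ diagonals survive, so the determinant no longer splits into a pure product. Instead it genuinely computes a convolution, and I must verify that this convolution is precisely the power-series coefficient of the stated rational function. I would make this rigorous by interpreting the determinant via the Lindström–Gessel–Viennot lemma (weighted lattice paths) or, equivalently, by observing that a lower-triangular banded determinant with constant diagonals is the $d$-th coefficient of a product of geometric series — namely $\prod$ over the surviving band-entries — and then reading off $[t^{d-k}]$ of the reciprocal polynomial. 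Confirming that the signs and the substitution $x_p\mapsto -x_p y$ align correctly with the claimed formula is the delicate part, but it is a finite check that specializes to Lemma \ref{l4} when $r=0$.
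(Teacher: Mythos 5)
Your proposal follows essentially the same route as the paper's proof: Jacobi--Trudi in the $e_j$'s as in \eqref{jt}, replacement of every entry by its $q$-linear part (valid because the determinant has size $d$ and each $e_j$ is at most linear in $q$), extraction of $e_{N+1}^k=((-1)^Ny)^k$ from the triangular first $k\times k$ block, and evaluation of the remaining $(d-k)\times(d-k)$ banded Toeplitz determinant through its generating function. The paper implements your ``reciprocal of the band polynomial'' step by expanding that determinant along its first column to get the recursion $T_m=\sum_{j=0}^{r}(-1)^je_{N+1}^je_{N-j}T_{m-j-1}$, so that $\sum_m T_mt^m$ is the reciprocal of $(1-(-1)^{N-1}t)\prod_{p}(1-(-1)^{N-1}x_pyt)$; this is precisely the transfer-matrix collapse you describe.

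However, your indexing of the surviving entries is wrong, and taken literally it would derail the computation. Since $P$ has degree $N+1$, there are no elementary symmetric functions $e_j$ with $j>N+1$; the $q$-linear term $-q(z+y)\prod_p(1+zx_p)$ has degree $r+1\le N$ in $z$, so it perturbs the coefficients of $z^0,\dots,z^{r+1}$ of $P$, i.e.\ it lands in $e_{N-r},e_{N-r+1},\dots,e_{N+1}$ --- the band extends \emph{downward} from $e_{N+1}$, not upward to $e_{N+1+r}$. The correct replacement is $e_{N+1-s}\mapsto(-1)^{N-s}\left[z^s\right](y+z)\prod_p(1+zx_p)$ for $0\le s\le r+1$ (so $e_{N+1}\mapsto(-1)^Ny$), not $e_{N+s}\mapsto(-1)^N\left[t^s\right](1+yt)\prod_p(1+x_pt)$ as you wrote; note your polynomial $(1+yt)\prod_p(1+x_pt)$ also differs from the correct $(y+z)\prod_p(1+zx_p)$, whose constant term is $y$. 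The corrected band gives a matrix with one superdiagonal ($e_{N+1}$), diagonal $e_N$, and $r$ subdiagonals $e_{N-1},\dots,e_{N-r}$; this is exactly what makes the first $k\times k$ block lower triangular with $e_{N+1}$ on the diagonal and what produces the recursion above. With your indices as written, every band entry other than $e_N,e_{N+1}$ is identically zero, and the determinant would collapse to the $r=0$ answer of Lemma \ref{l4}, which is false for $r>0$. Once the indices and the band polynomial are corrected, the remaining steps of your outline --- the convolution identity, the sign check (the paper does it via the substitution $z=(-1)^Nyt$), and the terse treatment of $k>d+N$, which matches the paper's own level of detail --- reproduce the paper's argument.
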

	\begin{proof} The proof is similar to that of Lemma \ref{l4}. Assume $k\leq d$, the other case being analogous. 
		Since the $z_i$'s are the roots of the polynomial $P(z)$, the elementary symmetric functions are
		\begin{align*}
			e_j= \binom{N}{j}+(-1)^{j-1}q\left[z^{N+1-j}\right](y+z)(1+zx_1)\cdots (1+zx_r).
		\end{align*}
		
		We examine again the Jacobi-Trudi determinant \eqref{jt} \begin{align*}
			s_{\lambda_k}= \left|\begin{array}{cccccccccc}
				e_{N+1}& 0&\cdots &0& \rvline&\quad0 &\cdots &0&0\\
				e_{N}&e_{N+1}&\cdots &0&\rvline&\quad 0&\cdots  &0&0\\
				e_{N-1}&e_{N}&\cdots & 0&\rvline&\quad 0 &\cdots &0&0\\
				\vdots & \vdots &\cdots&\vdots &\rvline&\quad\vdots&\cdots&\vdots&\vdots&\\
				e_{N-k+2}&e_{N-k+3}&\cdots & e_{N+1}& \rvline & \quad 0&\cdots &0&0\\
				&&&&\rvline&&&&&\\
				\hline
				e_{N-k}&e_{N-k+1} &\cdots &e_{N-1}&\rvline& \quad e_N & \cdots &0&0\\
				\vdots&\vdots&\cdots&\vdots&\rvline& \vdots&\cdots&\vdots &\vdots\\
				e_{N-d+2}&e_{N-d+3}&\cdots& e_{N-d+k+1}&\rvline&\quad e_{N-d+k+2}&\cdots&e_{N} &e_{N+1}\\
				e_{N-d+1}&e_{N-d+2}&\cdots&e_{N-d+k}&\rvline& \quad e_{N-d+k+1}&\cdots&e_{N-1} &e_N\\
			\end{array}\right|.
		\end{align*}
The $e_j$'s are at most linear in $q$. To find the coefficient of $q^d$ in the above $d\times d$ determinant, we may thus replace $e_j$ with the coefficient of the linear term in $q$. Thus, we may take \begin{equation}\label{ejnew}e_j = (-1)^{j-1}\left[z^{N+1-j}\right](y+z)(1+zx_1)\cdots (1+zx_r).\end{equation} In particular $e_{N+1}=(-1)^N y.$
		Furthermore, note that the first $k\times k$ block of the determinant is lower triangular, hence 
		\begin{equation*}
			\left[q^d\right]s_{\lambda_k}=e_{N+1}^{k}\cdot T_{d-k} = (-1)^{Nk} y^k \cdot T_{d-k}
		\end{equation*}
		where $T_m$ is the $m\times m$ determinant
		\begin{equation*}
			T_m=\begin{vmatrix}
				e_N&e_{N+1}&0& \cdots &0&0\\
				e_{N-1}&e_{N}&e_{N+1}& \cdots &0&0\\
				e_{N-2}&e_{N-1}&e_{N}& \cdots &0&0\\
				\vdots & \vdots &\vdots&\cdots &\vdots
				\\
				e_{N-m+2}&e_{N-m+3}&e_{N-m+2}& \cdots &e_{N}&e_{N+1}
				\\e_{N-m+1}&e_{N-m+2}&e_{N-m+1}& \cdots &e_{N-1}&e_{N}
			\end{vmatrix}.
		\end{equation*}
		The argument is completed using the Lemma below.
	\end{proof}
	\begin{lemma}
		Assume $e_1, \ldots, e_{N+1}$ are given by \eqref{ejnew}. For any $m\ge 0$, we have 
		\begin{equation}
			T_m=(-1)^{(N-1)m}\left[t^{m}\right]\frac{1}{(1-t)(1-x_1yt)\cdots (1-x_ryt)}.
		\end{equation}
	\end{lemma}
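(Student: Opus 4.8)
The plan is to evaluate the Toeplitz-type determinant $T_m$ explicitly by exploiting its banded structure. The matrix defining $T_m$ has entries $e_{N-\text{(something)}}$, and crucially, by the convention \eqref{ejnew} the relevant $e_j$ vanish outside the range $N-r\le j\le N+1$. Indeed, since $(y+z)(1+zx_1)\cdots(1+zx_r)$ is a polynomial of degree $r+1$, the coefficient $[z^{N+1-j}]$ is nonzero only when $0\le N+1-j\le r+1$, i.e. $N-r\le j\le N+1$. Thus each row of the determinant has at most $r+2$ consecutive nonzero entries, and $T_m$ is the determinant of a banded lower-Hessenberg Toeplitz matrix.

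The key observation is that such a determinant equals a coefficient in a generating function. First I would set up the standard fact that for a lower-Hessenberg Toeplitz matrix with superdiagonal entry $e_{N+1}$ and subdiagonal-band entries $e_N, e_{N-1},\dots$, the sequence $T_m$ satisfies a linear recurrence whose characteristic generating series is built from the $e_j$. Concretely, expanding $T_m$ along the last column (or last row) produces a convolution recurrence, so that the generating function $\sum_{m\ge 0} T_m\, s^m$ is the reciprocal of a polynomial in $s$ whose coefficients are the $e_{N+1-i}$ up to sign. Writing $g(z)=(y+z)(1+zx_1)\cdots(1+zx_r)$, one finds after tracking signs that
\begin{equation*}
\sum_{m\ge 0} T_m\, s^m = \frac{1}{\displaystyle\sum_{i\ge 0}(-1)^{(N-1)i}\,e_{N+1-i}^{\,\prime}\, s^i}
\end{equation*}
where the $e^\prime$ denote the same coefficients organized by band position; the point is that this denominator, after the sign normalization $s\mapsto (-1)^{N-1}t$, collapses to $(1-t)(1-x_1 y t)\cdots(1-x_r y t)$.

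The cleanest route to identify the denominator is to substitute the explicit values of the relevant $e_j$. From \eqref{ejnew} one computes $e_{N+1}=(-1)^N y$ and, more generally, $(-1)^{j-1}e_j$ is the coefficient of $z^{N+1-j}$ in $g(z)$. Reindexing the band by $i=N+1-j$, the generating polynomial in the denominator becomes (up to the global sign $(-1)^{(N-1)m}$ absorbed into the statement) precisely the polynomial obtained by substituting $z\mapsto$ the appropriate reciprocal variable into $g$. Carrying out this substitution, the factor $(y+z)$ produces $(1-t)$ after the normalization that turns $y$ into the leading term, and each factor $(1+zx_p)$ produces $(1-x_p y t)$. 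Hence
\begin{equation*}
T_m=(-1)^{(N-1)m}\left[t^m\right]\frac{1}{(1-t)(1-x_1 y t)\cdots(1-x_r y t)},
\end{equation*}
as claimed.

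The main obstacle I anticipate is the bookkeeping of signs and the precise reindexing in the generating-function identity: one must match the alternating signs $(-1)^{(N-1)i}$ coming from the Hessenberg expansion against the signs $(-1)^{j-1}$ built into \eqref{ejnew}, and verify that the product structure of $g(z)$ transfers exactly to the product $(1-t)\prod_p(1-x_p y t)$ under the variable change. A convenient way to sidestep a direct recurrence analysis is to recognize $T_m$ as a complete homogeneous symmetric function evaluated via Jacobi-Trudi in reverse: the banded Toeplitz determinant with entries $e_j$ is, by the dual Jacobi-Trudi (Nägelsbach-Kostka) identity, a single Schur-type object, so its generating series is automatically the reciprocal of $\sum_j (-1)^{j} e_j s^j$. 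Invoking this standard identity reduces the proof to the algebraic simplification of that reciprocal, which is the routine computation sketched above.
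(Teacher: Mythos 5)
Your overall strategy is the same as the paper's: expand the Hessenberg--Toeplitz determinant to get a convolution recurrence, pass to the generating series, then substitute the explicit values \eqref{ejnew}. But the pivotal identity is misstated, and the problem is not the sign bookkeeping you flag as the "main obstacle" --- it is the powers of the superdiagonal entry. Expanding along the first column (the minor of the entry $e_{N-j}$ in row $j+1$ is block triangular with $j$ copies of $e_{N+1}$ on the diagonal of the top block) gives
\begin{equation*}
T_m=\sum_{j=0}^{r}(-1)^j\,e_{N+1}^{\,j}\,e_{N-j}\,T_{m-j-1},
\end{equation*}
the sum truncating at $j=r$ because $e_{N-j}=0$ for $j>r$. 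Hence the denominator of $\sum_m T_m t^m$ is $1-\sum_{j=0}^{r}(-1)^je_{N+1}^{\,j}e_{N-j}t^{j+1}$: its coefficients are \emph{not} "the $e_{N+1-i}$ up to sign" but carry the factors $e_{N+1}^{\,j}$, and since $e_{N+1}=(-1)^Ny$ these factors are exactly what rescales the variable by $y$. Substituting \eqref{ejnew} into the \emph{correct} denominator implements the substitution $z=(-1)^Nyt$ into $g(z)/y$, giving $\bigl(1-(-1)^{N-1}t\bigr)\prod_{p}\bigl(1-(-1)^{N-1}x_pyt\bigr)$, which is where the $y$ in each factor $(1-x_pyt)$ comes from. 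By contrast, your stated denominator $\sum_i(-1)^{(N-1)i}e_{N+1-i}s^i$ evaluates, after substituting \eqref{ejnew}, to $(-1)^Ng\bigl((-1)^Ns\bigr)$ --- the substitution $z\mapsto\pm s$ with no $y$ --- whose constant term is $(-1)^Ny\neq 1$ and whose reciprocal involves factors $(1\mp x_ps)$ rather than $(1-x_pyt)$. So the claimed "collapse" to $(1-t)\prod_p(1-x_pyt)$ is asserted rather than derived, and the missing bridge (the $e_{N+1}$-powers, equivalently conjugating by a diagonal matrix to make the superdiagonal equal to $1$) is precisely the content of the lemma.

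The dual Jacobi--Trudi shortcut fails for the same reason. The clean identity "generating series $=$ reciprocal of $\sum_j(-1)^je_js^j$" is the statement $\sum_m h_mt^m=1/E(-t)$, and it corresponds to the determinant $h_m=\det(e_{1+j-i})_{m\times m}$, whose superdiagonal is $e_0=1$. Your determinant is $\det(e_{N+j-i})_{m\times m}=s_{(m^N)}$ (N\"agelsbach--Kostka), whose superdiagonal is $e_{N+1}=(-1)^Ny$, and its generating series is not that reciprocal. A one-line sanity check: for $N=1$, $r=0$ one has $e_1=1$, $e_2=-y$, so $T_m$ is upper triangular with determinant $1$ for all $m$, in agreement with the lemma, whereas $1/(1-e_1t+e_2t^2)=1/(1-t-yt^2)=1+t+(1+y)t^2+\cdots$. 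The paper's proof is exactly your first route done correctly: the recursion above, the generating series $T=\bigl(1-\sum_{j=0}^{r}(-1)^je_{N+1}^{\,j}e_{N-j}t^{j+1}\bigr)^{-1}$, and the term-by-term identification $(-1)^{j+1}e_{N+1}^{\,j}e_{N-j}=[t^{j+1}]\bigl((1-(-1)^{N-1}t)\prod_p(1-(-1)^{N-1}x_pyt)\bigr)$ via the substitution $z=(-1)^Nyt$.
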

	\begin{proof} We set $T_0=1$ and $T_\ell=0$ for $\ell<0$. 
		By expanding the determinant $T_m$ along the first column and then successively along the rows, we obtain the recursion 
		\begin{equation*}
			T_m=\sum_{j=0}^{r}(-1)^je_{N+1}^je_{N-j} T_{m-j-1}\quad \text{ for all } m>0. 
		\end{equation*} Note that by \eqref{ejnew}, for degree reasons we have $e_{N-j}=0$ if $j>r$. This explains the upper bound of the index $j$ in the sum. 
		Forming the generating series $$T=\sum_{m=0}^{\infty} T_m t^m,$$ the above recursion immediately yields $$T=\bigg(1-\sum_{j=0}^{r}(-1)^je_{N+1}^je_{N-j} t^{j+1}\bigg)^{-1}.$$
		Substituting the values of $e_j$ from \eqref{ejnew}, we obtain for all $0\leq j\leq r$ that 
		\begin{align*}
			(-1)^{j+1}e_{N+1}^je_{N-j}&=(-1)^{N(j+1)}y^{j+1}\left[z^{j+1}\right]\left(\left(1+\frac{z}{y}\right)(1+zx_1)\cdots (1+zx_r)\right)\\
			&= \left[t^{j+1}\right]\left((1-(-1)^{N-1}t)(1-(-1)^{N-1}x_1yt)\cdots (1-(-1)^{N-1}x_ryt)\right),
		\end{align*} 
		where the substitution $z=(-1)^{N} yt$ was carried out in the last step. 
		Therefore
		\begin{align*}
		T=\bigg(1-\sum_{j=0}^{r}(-1)^je_{N+1}^je_{N-j} t^{j+1}\bigg)^{-1}= \frac{1}{(1-(-1)^{N-1}t)\prod_{p=1}^{r}(1-(-1)^{N-1}x_pyt)}.
		\end{align*}
		Taking the coefficient of $t^m$ gives the required identity.
	\end{proof}
	
		\subsection{Arbitrary genus}\label{arb} Relying on the ideas of \cite{EGL}, we show how the calculations for $C=\mathbb{P}^1$ imply Theorems \ref{t1} and \ref{t2} for arbitrary genus. We explain this for Theorem \ref{t1}, the case of Theorem \ref{t2}  being entirely similar. The argument is also noted and used in \cite{OP} over surfaces for punctual quotients of trivial bundles, and extended to quotients of arbitrary vector bundles in \cite{Sta2}. The case of curves is analogous, but we record the details for the benefit of the readers who seek a self-contained account. 
\vskip.1in

{\it Step1.}  The first goal is to show that the Euler characteristic \begin{equation}\label{chily}\chi(\quot_d(E), \wedge_y L^{[d]})\end{equation} is a polynomial (that may depend on $N$) in $\deg E$, $\deg L$ and $\chi(\mathcal O_C)$ (with coefficients in $\mathbb Q[y]$), for all smooth projective possibly disconnected curves $C$. In fact, the statement holds for all tautological integrals of the form \begin{equation}\label{tautological} \int_{\quot_d(E)} \mathsf P\end{equation} where $\mathsf P$ is a polynomial in the Chern classes of the tangent bundle of $\quot_d(E)$ and $L^{[d]}.$ 

We first analyze the case of split vector bundles $$E=\bigoplus_{i=1}^{N} F_i, \quad \text{rk }F_i=1.$$ For such a vector bundle, we can use the action of $\mathbb C^{\star}$ on the summands of $E$ to evaluate \eqref{tautological}, just as we have done for genus $0$ above. This way, we are led to considering integrals of the form \begin{equation}\label{locpro}\int_{C^{[d_1]}\times \cdots \times C^{[d_N]}} \mathsf Q\end{equation} where $\mathsf Q$ is a polynomial involving the Chern classes of $$\pi_{\star}\left(\mathcal K_i\otimes M\right),\quad \pi_{\star} \left(\mathcal K_i^{\vee}\otimes M\right), \quad \pi_{\star}\left(\mathcal K_i^{\vee}\otimes \mathcal K_j\otimes M\right)$$ for various $M\to C$, including $M=F_i^{\vee} \otimes F_j$ or $M=L\otimes F_i$. We can evaluate these with the aid of Grothendieck-Riemann-Roch. The integrals \eqref{locpro} can be pulled back via the finite map $$p: C^d\to C^{[d_1]}\times \cdots \times C^{[d_N]}.$$ The pullbacks of $\mathcal K_i^{\vee}$ over $C^d\times C$ correspond to sums of diagonals $\Delta_{\bullet, d+1}$, and thus $\eqref{locpro}$ takes the form $$\frac{d_1!\cdots d_N!}{d!}\int_{C^d} \widetilde{\mathsf Q}$$ where $\widetilde{\mathsf Q}$ is a universal expression in the diagonals and classes from $C$. In general, monomials in diagonals and classes from $C$ can be evaluated explicitly using that for $\Delta\hookrightarrow C\times C$ we have $$\Delta^{2}=2\chi(\mathcal O_C),\quad \Delta\cdot M=\deg M,$$ for all smooth projective possibly disconnected curves $C$, $M\to C$. Therefore \eqref{tautological} is a polynomial in $\deg F_i$, $\deg L$ and $\chi(\mathcal O_C)$. 
	
{\it Step 2.} We next argue that the above polynomial only depends on $\deg E=\sum_i \deg F_i$, $\deg L$ and $\chi(\mathcal O_C)$. This requires additional considerations. We write $$x_i=\deg F_i, \quad y=\deg L, \quad z=\chi(\mathcal O_C),$$ and $\mathsf R(x_1, \ldots, x_N, y, z)$ for the universal polynomial found above. The polynomial $\mathsf R$ is certainly symmetric in $x_1, \ldots, x_N$. 
	
	We claim that if $x_i$ are sufficiently large, $\mathsf R(x_1, \ldots, x_N, y, z)$ is in fact a polynomial in $\sum_{i=1}^{N} x_i.$ Indeed, for large degrees, the line bundles $F_i$ are globally generated (over connected curves $C$). Thus we can write $E$ as a quotient $$0\to K\to W\to E\to 0,$$ where $W$ is a trivial bundle (whose rank depends on $\deg E$). By \cite [Theorem 5]{Sta1}, modified from the original setting of surfaces to the case of curves, there is an embedding \begin{equation}\label{emb}\quot_d(E)\hookrightarrow \quot_d(W)\end{equation} cut out by a canonical section of the bundle $\left (K^{\vee}\right)^{[d]}$. With this observation, the integral \eqref{tautological} rewrites as \begin{equation}\label{tautological2}\int_{\quot_d(E)} \mathsf P=\int_{\quot_d(W)} \widetilde {\mathsf P}\end{equation} where $\mathsf P$ is a polynomial in the Chern classes of the tangent bundle of 
$\quot_d(W)$ and the tautological bundles $\left(K^{\vee}\right)^{[d]}$ and $L^{[d]}$. Applying the localization argument in {\it Step 1} once again, this time to $\quot_d(W),$ we see that \eqref{tautological2} only depends on $$\deg K^{\vee}=\deg E, \quad \deg L, \quad \chi(\mathcal O_C).$$ Thus, the polynomial $\mathsf R(x_1, \ldots, x_N, y, z)$ is a function of $\sum_{i=1}^{N} x_i, y, z,$ when $x_i$ are large. Hence $$\mathsf R(x_1, \ldots, x_N, y, z)=\mathsf S(x_1+\ldots+x_N, y, z),$$ for a new universal polynomial $\mathsf S$. This proves the statement we need about \eqref{tautological} when the bundle $E$ splits. 
	
{\it Step 3.} The general case follows from the following observation. Assume $E$ sits in an extension $$0\to E_1\to E\to E_2\to 0.$$ Considering the universal extension $$0\to p^{\star} E_1\to \mathcal E\to p^{\star} E_2\to 0$$ over $p:C\times \text{Ext}^{1} (E_2, E_1)\to C$, and constructing the relative Quot scheme $\quot_d(\mathcal E)$ over the extension space, we see that  $$\int_{\quot_d(E)} \mathsf P= \int_{\quot_d(E_1\oplus E_2)} \mathsf P.$$ To reduce to the case of split $E$, consider $M$ a line bundle such that $$0\to M\to E\to F\to 0$$ is exact and $F$ is a vector bundle of smaller rank. By the above observation we can replace $E$ by $M\oplus F$, and then continue inductively.

	\vskip.1in
		
{\it Step 4.} We return to the series appearing in Theorem \ref{t1}, namely $$\mathsf{Z}_{C, L, E}=\sum_{d=0}^{\infty} q^d \chi \left(\quot_{d}(E), \wedge_y L^{[d]}\right).$$ 
Consider a disconnected curve $C=C_1\sqcup C_2$, $E=E_1\sqcup E_2$ and $L=L_1\sqcup L_2$. We compare the Quot schemes of $C, C_1, C_2$ and the tautological bundles over them: $$\mathsf {Quot}_d(E)=\bigsqcup_{d_1+d_2=d}\mathsf {Quot}_{d_1}(E_1)\times \mathsf {Quot}_{d_2}(E_2), \quad L^{[d]}=\bigsqcup_{d_1+d_2=d} L_1^{[d_1]}\boxtimes L_2^{[d_2]}.$$ This implies \begin{equation}\label{e3}{\mathsf Z}_{C, L, E}={\mathsf Z}_{C_1, L_1, E_1}\cdot {\mathsf Z}_{C_2, L_2, E_2}.\end{equation} By the arguments of \cite[Theorem 4.2]{EGL}, the factorization \eqref{e3} shows that \begin{equation}\label{factor2}\mathsf Z_{C, L, E}=\mathsf A^{\chi(C, \mathcal O_C)} \cdot {\mathsf B}^{\deg L}\cdot \mathsf C^{\deg E},\end{equation} for universal series $\mathsf A, \mathsf B, \mathsf C \in \mathbb Q(y)[[q]]$ that depend only on $N$. We specialize to $(C, L)=(\mathbb P^1, \mathcal O_{\mathbb P^1}(\ell))$ with $\ell$ sufficiently large, and $E=\mathcal O(a_1)\oplus \ldots \oplus \mathcal O(a_N)$. Comparing \eqref{answ} and \eqref{factor2}, we obtain  $${\mathsf A}=(1-q)^{-1}\cdot (1+qy)^N, \quad {\mathsf B}=(1+qy)^N, \quad {\mathsf C}=1+qy.$$ Substituting these expressions back into \eqref{factor2}, we obtain Theorem \ref{t1} for all genera: $$\mathsf Z_{C, L, E}=\mathsf A^{\chi(C, \mathcal O_C)} \cdot {\mathsf B}^{\deg L}\cdot \mathsf C^{\deg E}=(1-q)^{-\chi(\mathcal O_C)} \cdot (1+qy)^{\chi(E\otimes L)}.$$ This completes the argument. \qed
\begin{example} Theorem \ref{t1} in higher genus immediately implies $$\chi\left(\quot_d (E), \wedge^k L^{[d]}\right)=0\,\,\text{ if }d\geq k+g, \,\,g\geq 1.$$ This follows by examining the coefficient of $q^d y^k$ in the expression $(1-q)^{-\chi(\mathcal O_C)}(1+qy)^{\chi(E\otimes L)}.$ 
\end{example} 
	\section{Symmetric Powers} \label{s3}
	\subsection{Genus zero.} Theorem \ref{t4} concerns the symmetric powers of the tautological bundles $\mathsf{Sym}_y L^{[d]}$ in genus $0$ and is proven in a similar fashion as Theorem \ref{t1}. The calculations are however more involved. The higher genus case and Theorem \ref{t5} will be considered in Section \ref{univf}. \vskip.1in
	
	By Section \ref{arb}, for each $d$ and $k$, the Euler characteristic of $$\chi\left(\quot_d, \mathsf{Sym}^{k} L^{[d]}\right)$$ depends polynomially on $\ell$. To prove Theorem \ref{t4}, it suffices to assume $b_i=\ell+a_i+1\ge d+1$ for all $i$. 
			
	By Hirzebruch-Riemann-Roch followed by Atiyah-Bott localization, we calculate \begin{equation}\label{syma}\chi\left(\quot_d, \mathsf{Sym}_y L^{[d]}\right)=\int_{\quot_d} \ch (\mathsf{Sym}_y L^{[d]})\, \td \left(\quot_d\right)=\sum_{\vec{d}}\int_{\fix_{\vec{d}}}
		\ch(\mathsf {Sym}_y L^{[d]})\frac{\td(\quot_d)}{e_{\mathbb{C}^*}(\textrm {N}_{\vec{d}})}\bigg|_{\fix_{\vec{d}}}.\end{equation}
Instead of Lemma \ref{chern_classes}, for the current computation we use the expression 
	\begin{equation}\label{symmm}
			\ch(\mathsf{Sym}_y L^{[d]} )\bigg|_{\textrm {F}_{\vec{d}}}=\prod_{i\in [N]} \bigg(\frac{\alpha_i(z_i-y)}{z_i(\alpha_i-y)}\bigg)^{a_i+\ell+1}\bigg(\frac{z_i}{z_i-y} \bigg)^{d_i}. 
		\end{equation} 

The Todd genera and the normal bundle contributions are found in \eqref{toddeufinal}. We substitute \eqref{toddeufinal} and \eqref{symmm} into \eqref{syma} and apply Lagrange-B\"urmann. Carrying out these steps carefully, we arrive at the following. Consider the polynomial \[P(z)=(z-y)R(z)-qz, \] and let $z_1, \ldots, z_{N+1}$ be its roots with $z_i(q=0)=\alpha_i$ for $1\leq i\leq N.$ Then, just as in the derivation leading up to \eqref{good} for exterior powers, \eqref{syma} turns into 
\begin{align*}
		(-1)^{(N-1)d}\left[q^d\right]\prod_{i\in [N]}\bigg(\frac{\alpha_i-z_{N+1}}{\alpha_i-y}\bigg)^{b_i}\bigg(\frac{z_i}{z_i-y} \bigg)^{-1} \frac{z_i^{d+1}}{P'(z_i)}\prod_{i, j\in [N],\,\,\, i\neq j}(z_i-z_j)\bigg{|}_{\epsilon=0}.
\end{align*} This simplification makes use of the fact that $$\frac{dq}{dh_i}=P'(z_i).$$ 
As in \eqref{de}, the above expression can be recast as the quotient of determinants 
\begin{align*}
	\left[\epsilon^0q^{d}\right] \frac{(-1)^{(N-1)d}}{\det(z_i^{N-j+1})}\begin{vmatrix}
		(z_1-y)z_1^{d+N-1}&\cdots& (z_1-y)z_1^{d}\\
		(z_2-y)z_2^{d+N-1}&\cdots &(z_2-y)z_2^{d}\\
		\vdots&\cdots & \vdots\\
		(z_N-y)z_N^{d+N-1}&\cdots &(z_N-y)z_N^{d}
	\end{vmatrix}\prod_{i\in [N]}\bigg( \frac{\alpha_i-z_{N+1}}{\alpha_i-y}\bigg)^{b_i}.
\end{align*}
The same derivation that led to \eqref{de2} yields the enlarged $(N+1)\times (N+1)$ determinant 
	\begin{align*}
	[\epsilon^0 q^d]\frac{(-1)^{(N-1)d}}{\det(z_i^{N-j+1})}
	\begin{vmatrix}
		(z_1-y)z_1^{d+N-1}& (z_1-y)z_1^{d+N-1}&\cdots & (z_1-y)z_1^{d}& \prod_{i=1}^{N}(\frac{\alpha_i-z_1}{\alpha_i-y})^{b_i}\\
		(z_2-y)z_2^{d+N-1}& (z_2-y)z_2^{d+N-1}&\cdots & (z_2-y)z_2^{d}& \prod_{i=1}^{N}(\frac{\alpha_i-z_2}{\alpha_i-y})^{b_i}\\
		\vdots& \vdots &\cdots & \vdots& \vdots
		\\
		(z_{N+1}-y)z_{N+1}^{d+N-1}& (z_{N+1}-y)z_{N+1}^{d+N-1}&\cdots & (z_{N+1}-y)z_{N+1}^{d}& \prod_{i=1}^{N}(\frac{\alpha_i-z_{N+1}}{\alpha_i-y})^{b_i}
	\end{vmatrix}.
\end{align*} This uses $b_i\geq d+1$ for all $i$, and the fact that $\alpha_i-z_i$ has no free $q$-term, so in particular the first $N$ entries of the last column do not contribute to the $q^d$-coefficient. 

The expression above is symmetric in the roots of $P$, and as previously remarked the substitution $\alpha_i=1$ is allowed to obtain the coefficient of $\epsilon^0$. Thus $z_1, \ldots, z_{N+1}$ become roots of $$P(z)=(z-1)^N(z-y)-qz.$$ This also turns the last column into the vector with entries $$\frac{(1-z_i)^{\chi}}{(1-y)^{\chi}}=\frac{1}{(1-y)^{\chi}}\sum_{\ell=0}^{\chi} \binom{\chi}{\ell} (-1)^{\ell} z_i^{\ell}.$$ Here $\chi=\sum_i b_i=\chi(E\otimes L)$. 

Using the additivity of the determinant with respect to the first $N$ columns, we split the last determinant into a sum \begin{align*}
\left[q^d\right]	\sum_{\ell=0}^{\chi} \sum_{m=0}^{N}\frac{(-1)^{(N-1)d+\ell}}{(1-y)^\chi} \binom{\chi}{\ell}(-y)^{m}\frac{1}{\det(z_i^{N-j+1})}
	\begin{vmatrix}
		z_1^{d+N}&\cdots& z_1^{d+m+1}&z_1^{d+m-1}&\cdots &z_1^{d}& z_1^\ell\\
		z_2^{d+N}& \cdots& z_2^{d+m+1}&z_2^{d+m-1}&\cdots & z_2^{d}& z_2^\ell\\
		\vdots& \cdots &\vdots & \vdots& \cdots & \vdots & \vdots 
		\\
		z_{N+1}^{d+N}& \cdots& z_{N+1}^{d+m+1}&z_{N+1}^{d+m-1}&\cdots & z_{N+1}^{d}& z_{N+1}^\ell
	\end{vmatrix}.
\end{align*} 
Indeed, from each of the first $N$ columns we select $N$ powers of $z_i$ whose exponents range from $d$ to $d+N$. Exactly one value $d+m$ must be skipped, giving a term in the sum. The contribution $(-y)^m$ comes from terms with exponents between $d$ and $d+m-1$. 

Regarding the last sum, we make the following three remarks.
\begin{itemize}
\item [(i)] When $\ell<d$, the above quotient of determinants is the Schur polynomial for the partition $\lambda=(d^{N-m},(d-1)^m, \ell)$. Using Jacobi-Trudi as in Lemma \ref{l4}, we obtain that $$\left[q^d\right] s_{\lambda}(z_1, \ldots, z_{N+1})=\begin{cases} (-1)^{(N-1)d} &\text{ if } \ell=m=0\\ 0 &\text{ otherwise }\end{cases}.$$ 
\item [(ii)] When $\ell>d+N$, the shape of the partition changes to $\lambda=(\ell-N,(d+1)^{N-m},d^m)$, and we also acquire an additional $(-1)^N$ coming from permuting the columns to bring the last one to the front. Note that $\lambda$ contains the rectangular partition $(d^{N+1})$ and a hook partition $\mu:=(\ell-N-d,1^{N-m})$. Examining the determinant, we can factor $z_i^{d}$ from each column. Thus  \[s_{\lambda}=e_{N+1}^{d}\cdot s_{\mu}=y^d \cdot s_{\mu}. \] Here we used that $e_{N+1}=y$ which can be seen from the expression $P(z)=(z-1)^N(z-y)-qz$.  
\item [(iii)] Finally, for $d\leq \ell\leq d+N$, the only value that can contribute is $\ell=d+m$, in which case we can directly evaluate the corresponding quotient of determinants to be $(-1)^{m}y^d$. The coefficient of $q^d$ vanishes in this case (for $d\neq 0$). 
\end{itemize}

\noindent Putting everything together we conclude $$\chi\left(\quot_d, \mathsf{Sym}_y L^{[d]}\right)=\frac{1}{(1-y)^{\chi}}+O(y^d).$$ Consequently, for $d>k$, we have $$\chi \left(\quot_d, \mathsf{Sym}^k L^{[d]}\right)=\left[y^k\right]\frac{1}{(1-y)^{\chi}}=\binom{\chi+k-1}{k}.$$ The result is also correct for $d=k$; this can be seen for instance from the result below. \qed
\vskip.1in
With a bit more effort, the same ideas (combined with a residue calculation) yield a general expression in genus $0$. We need this result in order to prove Theorem \ref{t5} in all genera in Section \ref{univf}. 
	\begin{theorem}\label{t10} When $C=\mathbb P^1$ and $\chi=\chi(E\otimes L)$, we have $$
	\chi(\quot_d(E), \mathsf{Sym}_y L^{[d]}) = \sum_{k=0}^{d}\binom{-\chi+d(N+1)}{k}\frac{(-y)^k}{(1-y)^{d(N+1)}}.$$
	\end{theorem}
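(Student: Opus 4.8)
The plan is to follow the same localization-plus-Lagrange-B\"urmann machinery that established Theorem \ref{t4}, but to retain all terms in the sum rather than merely extracting the leading behavior in $y$. As before, I would assume $b_i=\ell+a_i+1\geq d+1$ for all $i$, since by the universality of Section \ref{arb} the Euler characteristic $\chi(\quot_d,\mathsf{Sym}^k L^{[d]})$ depends polynomially on $\ell$; once the formula is verified for all sufficiently large $\ell$ it holds identically. The starting point is the determinantal expression derived in the proof of Theorem \ref{t4}, namely the sum over $\ell$ and $m$ of quotients of $(N+1)\times(N+1)$ determinants, where $z_1,\dots,z_{N+1}$ are the roots of $P(z)=(z-1)^N(z-y)-qz$ after the substitution $\alpha_i=1$.

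The key new input is to organize the three cases (i)--(iii) from the Theorem \ref{t4} proof into a single closed form. Cases (i) and (iii) contribute only at $d=0$ or not at all to the $q^d$ coefficient, so the genuine content comes from case (ii), where $\ell>d+N$ and the relevant Schur polynomial factors as $s_\lambda=y^d\cdot s_\mu$ for the hook partition $\mu=(\ell-N-d,1^{N-m})$. First I would assemble the contributions $\sum_{\ell,m}$ using the known $q^d$-coefficients of these hook Schur polynomials, evaluated at the roots of $P$; the elementary symmetric functions of the $z_i$ are explicit since they are determined by the coefficients of $P$. I expect this to produce a single-variable generating sum in which the binomial coefficient $\binom{-\chi+d(N+1)}{k}$ arises from expanding a power of $(1-y)$ against the combinatorics of the hooks.

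The step I expect to be the main obstacle is the residue/generating-function manipulation that collapses the double sum over $\ell$ and $m$ into the stated single sum over $k$ with the factor $(1-y)^{-d(N+1)}$. Concretely, after substituting the explicit $e_j$'s and the hook formula, one is left with extracting $[q^d]$ from a symmetric expression in the roots of $P(z)=(z-1)^N(z-y)-qz$; the cleanest route is to rewrite this extraction as a contour integral $\frac{1}{2\pi i}\oint \cdots$ in $z$ (or in $q$), where the relation between $q$ and $z$ along $P(z)=0$ lets one change variables. The exponent $d(N+1)$ and the shift $-\chi$ should emerge naturally from the Jacobian $dq/dz$ and from the factor $(z-1)^{-\chi}$ produced by the last column $(1-z_i)^\chi$; tracking the signs $(-1)^{(N-1)d}$ and the precise binomial shift through this residue computation is where the bookkeeping is most delicate.

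Once the residue computation yields the right-hand side, I would finally check consistency with Theorem \ref{t4}: specializing the formula and extracting $[y^k]$ for $d>k$ must recover $\binom{\chi+k-1}{k}$, and the borderline $d=k$ case must also match. This comparison both confirms the normalization and supplies the promised verification that the Theorem \ref{t4} answer is correct at $d=k$.
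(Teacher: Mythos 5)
Your overall strategy is the one the paper follows: start from the determinantal expansion in the proof of Theorem \ref{t4} (with $b_i\geq d+1$ and polynomiality in $\ell$), isolate the hook--Schur contributions, collapse the resulting double sum by a residue computation, and check consistency with Theorem \ref{t4} at the end. However, there is a concrete error in your accounting of the cases. It is not true that cases (i) and (iii) ``contribute only at $d=0$ or not at all'': case (iii) indeed vanishes for $d>0$, but in case (i) the term $\ell=m=0$ gives $\left[q^d\right]s_{(d^N)}=(-1)^{(N-1)d}$ for \emph{every} $d$, hence the contribution $\frac{1}{(1-y)^{\chi}}$ to the Euler characteristic in every degree. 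This term is precisely the ``$1+$'' inside the bracket that feeds into Lemma \ref{l12}, and it cannot be dropped: by Lemma \ref{l11} every case (ii) contribution carries an overall factor $y^{d+1}$, so if case (i) were discarded your final answer would vanish at $y=0$, whereas the right-hand side of Theorem \ref{t10} equals $1$ there (consistent with $\chi(\quot_d,\mathcal O)=1$, which also follows from Theorem \ref{t1} at $y=0$).

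Second, the mechanism you propose for the main collapse is not the one that works, and as stated it is problematic. After the substitution $\alpha_i=1$, the polynomial $P(z)=(z-1)^N(z-y)-qz$ has $N$ roots colliding at $z=1$ when $q=0$; these individual roots are Puiseux series in $q^{1/N}$, not power series in $q$, so ``the relation between $q$ and $z$ along $P(z)=0$'' does not give a legitimate change of variables for coefficient extraction --- only \emph{symmetric} functions of the roots are power series in $q$, and the Lagrange--B\"urmann inversion has already been spent in deriving the determinantal expression. What the paper does instead is symmetric-function combinatorics followed by a residue argument in an \emph{auxiliary} variable: Lemma \ref{l11} evaluates $\left[q^d\right]\sum_{m}(-y)^m s_{(\ell,1^{N-m})}$ by Jacobi--Trudi, expanding the hook determinant into the $h_j$'s and $e_j$'s whose generating functions involve $t^{N+1}P(1/t)$, and obtains $(-1)^N\left[t^{\ell}\right]yt^{N(d-1)+1}(1-t)^{-Nd}(1-yt)^{-d-1}$; the binomial theorem then sums over $\ell$; finally Lemma \ref{l12} computes the remaining residue at $t=0$ by moving the contour to $t=\infty$ and $t=1/y$, and closes with a binomial identity proved by induction on $d$. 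The exponent $d(N+1)$ and the shift by $\chi$ come out of these two lemmas, not out of a Jacobian $dq/dz$ or a factor $(z-1)^{-\chi}$. These two steps are the real content of the proof, and your proposal leaves both unsupplied.
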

\proof Since both sides depend polynomially on $\ell$, see for instance the arguments in Section \ref{arb} for the left hand side, we may assume $\ell$ is sufficiently large. In this case, we have seen above that $$\chi(\quot_d(E), \mathsf{Sym}_y L^{[d]}) =\frac{1}{(1-y)^{\chi}}+\sum_{\ell>d+N}^{\chi} \frac{1}{(1-y)^\chi} (-1)^{(N-1)d+N+\ell}\binom{\chi}{\ell} y^d \left[q^d\right] \sum_{m=0}^{N} (-y)^{m} s_{\mu(\ell, m)},$$ for the partition $\mu(\ell, m)=(\ell-N-d, 1^{N-m}).$ 

Lemma \ref{l11} below evaluates the sum over $m$. We obtain  
\begin{align*}
	\chi(\mathsf {Sym}_y L^{[d]})&=\frac{1}{(1-y)^\chi}\bigg[1+\sum_{\ell>d+N}^{\chi}(-1)^{(N-1)d+\ell} \binom{\chi}{\ell} y^{d+1}\left[t^{\ell-N-d}\right]\frac{t^{N(d-1)+1}}{(1-t)^{Nd}(1-yt)^{d+1}} \bigg] \\ &=\frac{1}{(1-y)^\chi}\bigg[1+\sum_{\ell>d+N}^{\chi}(-1)^{(N-1)d+\ell} \binom{\chi}{\ell} y^{d+1}\text{Res}_{t=0}\,\,\frac{t^{(N+1)d-\ell}}{(1-t)^{Nd}(1-yt)^{d+1}}\,dt \bigg].\end{align*} We can allow all values $\ell\geq 0$ in the sum above since the residue vanishes in the range $\ell\leq N+d$. 
The binomial theorem evaluates the sum over $\ell$. 
Letting $$\omega = \frac{t^{(N+1)d-\chi}(1-t)^{\chi-Nd}}{(1-yt)^{d+1}}\,dt,$$ we conclude that  
\begin{align*}
		\chi(\mathsf {Sym}_y L^{[d]})=\frac{1}{(1-y)^\chi}\bigg[1+(-1)^{(N-1)d+\chi}y^{d+1}\text{Res}_{t=0}\, \,\omega \bigg].
\end{align*} Lemma \ref{l12} finishes the proof.  
\qed

\begin{lemma}\label{l11} Let $z_1, \ldots, z_{N+1}$ be the roots of $P(z)=(z-1)^N(z-y)-qz.$
	For $\ell>0$, we have
	\begin{align*}
		\left[q^d\right]\sum_{m=0}^N(-y)^m s_{(\ell,1^{N-m})}(z_1, \ldots, z_{N+1})=&(-1)^N\left[t^\ell\right]\frac{yt^{N(d-1)+1}}{(1-t)^{Nd}(1-yt)^{d+1}}.
	\end{align*}
\end{lemma}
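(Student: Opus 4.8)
The plan is to collapse the weighted sum $\sum_{m=0}^N(-y)^m s_{(\ell,1^{N-m})}$ into a single generating-function evaluation, exploiting the very special shape of $P$. First I record the symmetric functions of the roots. Writing $P(z)=(z-1)^N(z-y)-qz=\prod_{m=1}^{N+1}(z-z_m)$ and reading off coefficients, the elementary symmetric functions $e_j=e_j(z_1,\dots,z_{N+1})$ all equal the coefficients of $(1+t)^N(1+yt)$ except that $e_N$ picks up a linear term in $q$ and $e_{N+1}=y$; equivalently $\prod_m(1-z_mt)=(1-t)^N(1-yt)-qt^N$, so the complete homogeneous symmetric functions have generating series
$$H(t):=\sum_{k\ge0}h_k(z)\,t^k=\frac{1}{\prod_m(1-z_mt)}=\frac{1}{(1-t)^N(1-yt)-qt^N}.$$
Two evaluations do the heavy lifting. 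Since $P(y)=(y-1)^N(y-y)-qy=-qy$, one finds $E(-1/y):=\prod_m(1-z_m/y)=P(y)/y^{N+1}=-q/y^N$, and dually $H(1/y)=1/E(-1/y)=-y^N/q$. These reflect the fact that $y$ is an exact root of $P$ at $q=0$.

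Next I use the classical hook identity $s_{(\ell,1^j)}=\sum_{i=0}^{j}(-1)^i h_{\ell+i}\,e_{j-i}$, which packages into
$$\sum_{j\ge0}s_{(\ell,1^j)}\,u^j=\Big(\sum_{i\ge0}(-1)^i h_{\ell+i}\,u^i\Big)E(u),\qquad E(u)=\sum_k e_k u^k .$$
Because $s_{(\ell,1^j)}$ vanishes in $N+1$ variables once $j>N$, the target sum is exactly this generating function evaluated at $u=-1/y$:
$$\sum_{m=0}^N(-y)^m s_{(\ell,1^{N-m})}=(-y)^N\sum_{j\ge0}s_{(\ell,1^j)}(-1/y)^j=(-y)^N\,E(-1/y)\sum_{i\ge0}y^{-i}h_{\ell+i}.$$
Substituting $E(-1/y)=-q/y^N$ collapses everything to
$$\sum_{m=0}^N(-y)^m s_{(\ell,1^{N-m})}=(-1)^{N+1}q\,\Sigma,\qquad \Sigma:=\sum_{i\ge0}y^{-i}h_{\ell+i}.$$

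It remains to extract $[q^d]$ of the tail sum $\Sigma$ and match it to the stated coefficient. I would compute $\Sigma$ through its generating series in $t$: summing $h_{\ell+i}=[t^{\ell+i}]H(t)$ against $y^{-i}$ via a finite geometric sum gives
$$\sum_{\ell\ge0}\Sigma\,t^\ell=\frac{t\,H(t)-y^{-1}H(1/y)}{\,t-1/y\,},$$
the point being that the subtracted term $y^{-1}H(1/y)$ makes the numerator vanish at $t=1/y$, so the right-hand side is a genuine power series in $t$. Using $H(1/y)=-y^N/q$, multiplying by $(-1)^{N+1}q$, and applying $[q^{d-1}]H(t)=t^{N(d-1)}/D(t)^d$ with $D(t)=(1-t)^N(1-yt)$, the $q$-free term $y^{N-1}$ drops out and one is left with
$$[q^d]\!\left(\sum_{m=0}^N(-y)^m s_{(\ell,1^{N-m})}\right)=(-1)^{N}[t^\ell]\frac{y\,t^{N(d-1)+1}}{(1-t)^{Nd}(1-yt)^{d+1}},$$
which is the claim. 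I expect the only genuine obstacle to be the correct evaluation of the divergent-looking tail $\Sigma$: a naive termwise resummation is ill-defined, and the whole argument hinges on treating $H(1/y)=-y^N/q$ as the value of a rational function and using the symmetrized numerator above so that no spurious pole at $t=1/y$ survives. Once this is set up, the remaining steps are routine coefficient extractions, and the companion vanishing for $\ell>d+N$ (equivalently, emptiness of the resulting sum) follows along the same lines.
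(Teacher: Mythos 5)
Your route is genuinely different from the paper's, and in substance it works: the special evaluations $E(-1/y)=-q/y^N$, $H(1/y)=-y^N/q$, the symmetrized-numerator formula, and the final coefficient extraction are all correct, with correct signs. The paper instead writes each $s_{(\ell,1^{N-m})}$ as an $\ell\times\ell$ dual Jacobi--Trudi determinant, absorbs the sum over $m$ into the first row, expands along that row to get $\sum_i(-1)^{i-1}A_ih_{\ell-i}$, and then performs formal generating-function manipulations in $t$, discarding terms that cannot affect the $q^d$-coefficient. Your argument is shorter and more conceptual: it isolates the algebraic accident $P(y)=-qy$ (i.e.\ $y$ is a root of $P$ exactly at $q=0$) as the source of the overall factor of $q$, after which the lemma reduces to the one-line computation $[q^{d-1}]H(t)=t^{N(d-1)}/D(t)^{d}$.

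The genuine soft spot is the one you flag but do not actually repair: $\Sigma=\sum_i y^{-i}h_{\ell+i}$ and $H(1/y)=\sum_k h_ky^{-k}$ diverge in every formal sense (for each fixed power of $q$ and of $y$, infinitely many terms contribute, all with the same sign), so the evaluation of the hook generating function at $u=-1/y$ and the rearrangement behind your formula for $\sum_\ell\Sigma\,t^\ell$ are not literally legitimate, and declaring only at the end that $H(1/y)$ means a rational-function value does not retroactively justify those earlier interchanges. The fix must be installed at the first step: since $H$ is rational in $u$, the series $\sum_i(-1)^ih_{\ell+i}u^i$ is the power-series expansion of the rational function $(-u)^{-\ell}\bigl(H(-u)-\sum_{k<\ell}h_k(-u)^k\bigr)$; hence the hook identity is an equality of rational functions of $u$, which may then be evaluated at $u=-1/y$ because this is not a pole ($P(y)=-qy\neq 0$, so $y$ is none of the $z_m$). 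Unwinding your computation with this interpretation, it becomes exactly the finite polynomial identity
\begin{equation*}
\sum_{m=0}^N(-y)^m s_{(\ell,1^{N-m})}(z_1,\ldots,z_{N+1})=(-1)^N\bigl(y^{\ell+N}+qy\,h_{\ell-1}(z_1,\ldots,z_{N+1},y)\bigr),
\end{equation*}
which can also be proved directly, either by expanding the $(N+2)\times(N+2)$ bialternant determinant for $s_{(\ell-1)}(z_1,\ldots,z_{N+1},y)$ along its $y$-row, or analytically for $y>1$ and $q$ small negative real, where $\max_m|z_m|<y$ and all your series converge absolutely (the formal lemma then follows since both sides are polynomials in $q,y$). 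From this identity, your last step $[q^{d-1}]h_{\ell-1}(z,y)=[t^{\ell-1}]\,t^{N(d-1)}(1-t)^{-Nd}(1-yt)^{-(d+1)}$ gives the lemma, for $d\geq 1$. So: right idea, right answer, genuinely slicker than the paper's proof, but the regularization of the divergent sums has to be made precise at the start rather than remarked upon at the end.
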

\begin{proof}
	Using Jacobi-Trudi, the left hand side of the expression in the lemma equals the $\ell\times \ell$ determinant
	\begin{align*}
		\sum_{m=0}^{N}(-y)^m\begin{vmatrix}
			e_{N+1-m}& e_{N+2-m}&e_{N+3-m}&\cdots & e_{N+\ell-m}\\
			e_0&e_1&e_2&\cdots & e_{\ell-1}\\
			0&e_0&e_1&\cdots& e_{\ell}\\
			\vdots&\vdots&\vdots&\cdots&\vdots \\
			0&0&0 &\cdots &e_1
		\end{vmatrix}.
	\end{align*}
Summing with respect to $m$, we obtain that the $i$th term in first row becomes 
\begin{align*}
	A_i=\sum_{m=0}^{N} (-y)^m e_{N+i-m}&=\left[t^{N+i}\right](1-yt+\cdots +(-1)^Ny^Nt^N)(1+e_1t+\cdots+e_{N+1}t^{N+1})\\
	&= (-1)^{N+i}\left[t^{N+i}\right]\frac{(1-(yt)^{N+1})}{1-yt}\cdot t^{N+1}P(1/t).
\end{align*} 
Expanding with respect to the first row, we obtain the required determinant equals
\begin{align*}
	A_1h_{\ell-1}-A_{2}h_{\ell-2}+\cdots+(-1)^{\ell-1} A_{\ell}
\end{align*}
where $h_j=s_{(j)}$ is the homogeneous symmetric polynomial. We know that the homogeneous symmetric polynomials are given by 
\begin{align*}
	h_i&=\left[t^i\right]\frac{1}{(1-z_1t)\cdots (1-z_{N+1}t)}= \left[t^i\right]\frac{1}{t^{N+1}P(1/t)}.
\end{align*}
Thus the required sum equals
\begin{align}\label{reqsum}
	\sum_{i=1}^{\ell}(-1)^{i-1}A_ih_{\ell-i}&= \sum_{i=1}^{\ell} (-1)^{N+1}\bigg[\left[t^{N+i}\right]\frac{(1-(yt)^{N+1})}{1-yt}t^{N+1}P(1/t)\bigg]\bigg[\left[t^{\ell-i}\right]\frac{1}{t^{N+1}P(1/t)}\bigg]\\
	&\approx\sum_{j=0}^{N} (-1)^{N}\bigg[\left[t^{N-j}\right]\frac{(1-(yt)^{N+1})}{1-yt}t^{N+1}P(1/t)\bigg]\bigg[\left[t^{\ell+j}\right]\frac{1}{t^{N+1}P(1/t)}\bigg]\nonumber\\
	&\approx \sum_{j=0}^{N} (-1)^{N}\bigg[\left[t^{N-j}\right]\frac{t^{N+1}P(1/t)}{1-yt}\bigg]\bigg[\left[t^{\ell+j}\right]\frac{1}{t^{N+1}P(1/t)}\bigg],\nonumber
\end{align} where $\approx$ means equality of the $q^d$ coefficients. 
To justify the second line, we note that the difference with the previous term equals $$\left[q^d\right](-1)^{N}\left[t^{N+\ell}\right]\bigg( \frac{1-(yt)^{N+1}}{1-yt}t^{N+1}P(1/t)\cdot \frac{1}{t^{N+1}P(1/t)} \bigg)=0$$ for $d>0$. Moreover, since $j$ runs from $0$ to $N$, we may also ignore the term $(yt)^{N+1}$ in the second line, thus yielding the third equality. 

Note that \[t^{N+1}P(1/t)=(1-yt)(1-t)^N-qt^N. \]
Thus \begin{align*}
	\left[t^{N-j}\right]\frac{t^{N+1}P(1/t)}{1-yt}=\left[t^{N-j}\right] \left((1-t)^{N} - \frac{qt^{N}}{1-yt}\right)=\begin{cases}
		(-1)^{N-j}\binom{N}{N-j}& \text{if }j>0\\
		(-1)^N-q& \text{if }j=0
	\end{cases}.
\end{align*}
Hence the $q^d$-coefficient in the sum \eqref{reqsum} equals
\begin{align*}
	\left[q^d\right]\sum_{j=0}^{N}&(-1)^j\binom{N}{j}\left[t^{\ell+j}\right]\frac{1}{(1-yt)(1-t)^N-qt^N}+(-1)^{N+1}\left[q^{d-1}\right]\left[t^\ell\right]\frac{1}{(1-yt)(1-t)^N-qt^N}\\&=(-1)^N\left[q^d\right]\left[t^{\ell+N}\right]\frac{(1-t)^N}{(1-yt)(1-t)^N-qt^N}+(-1)^{N+1}\left[q^{d-1}\right]\left[t^\ell\right]\frac{1}{(1-yt)(1-t)^N-qt^N}.
\end{align*}
We note that the order in which we take the $q^d$ and $t^{\ell+N}$-coefficients can be switched. This is allowed in our case since we are considering expressions of the form $\left(1-\mathsf A(q, t)\right)^{-1}$ expanded near $q=t=0$, where $\mathsf A$ is a polynomial in $q, t$ (and $y$). Thus, taking the respective coefficient of powers of $q$ in the above expression we obtain
\begin{align*}
(-1)^N\left[t^{\ell+N}\right]\frac{t^{Nd}}{(1-yt)^{d+1}(1-t)^{Nd}}+(-1)^{N+1}\left[t^\ell\right]\frac{t^{N(d-1)}}{(1-yt)^{d}(1-t)^{Nd}}.
\end{align*}
This immediately implies the lemma. 
\end{proof}
\begin{lemma}\label{l12}
For $\chi\geq Nd$, set $$\omega=\frac{t^{(N+1)d-\chi}(1-t)^{\chi-Nd}}{(1-yt)^{d+1}}\,dt.$$ We have $$1+(-1)^{(N-1)d+\chi}y^{d+1}\text{Res}_{t=0}\,\, \omega=\sum_{k=0}^{d}\binom{-\chi+(N+1)d}{k}\frac{(-y)^k}{(1-y)^{(N+1)d-\chi}}.$$

\end{lemma}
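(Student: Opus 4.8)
The plan is to recognize the residue in the statement as the residue at $t=0$ of a rational $1$-form on $\mathbb P^1$ and to exploit the fact that the sum of all of its residues vanishes. For brevity I will write $M=(N+1)d-\chi$, so that the two exponents occurring are $(N+1)d-\chi=M$ and $\chi-Nd=d-M$, and the hypothesis $\chi\ge Nd$ becomes $M\le d$. This is precisely the condition ensuring that $(1-t)^{d-M}$ is an honest polynomial, so that
\[
\omega=\frac{t^{M}(1-t)^{d-M}}{(1-yt)^{d+1}}\,dt
\]
has poles only at $t=0$, $t=1/y$ and $t=\infty$.

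First I would dispose of the two easy residues. The residue theorem on $\mathbb P^1$ gives $\mathrm{Res}_{t=0}\,\omega=-\mathrm{Res}_{t=1/y}\,\omega-\mathrm{Res}_{t=\infty}\,\omega$. The residue at infinity is computed by the substitution $t=1/s$, under which $\omega=-\frac{(s-1)^{d-M}}{s(s-y)^{d+1}}\,ds$, whence $\mathrm{Res}_{t=\infty}\,\omega=(-1)^{M}/y^{d+1}$. Combining this with the sign simplification $(-1)^{(N-1)d+\chi}=(-1)^{M}$, which holds because $(N-1)d+\chi\equiv -M \pmod 2$, the term $1$ on the left-hand side of the lemma is cancelled exactly, and the assertion collapses to the single identity
\[
-(-1)^{M}y^{d+1}\,\mathrm{Res}_{t=1/y}\,\omega=\frac{1}{(1-y)^{M}}\sum_{k=0}^{d}\binom{M}{k}(-y)^{k}.
\]

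It remains to evaluate the order-$(d+1)$ residue at $t=1/y$. Setting $g(t)=t^{M}(1-t)^{d-M}$, this residue equals $\frac{1}{(-y)^{d+1}}\,[u^{d}]\,g(1/y+u)$; expanding $g(1/y+u)$ by the binomial theorem and extracting the powers of $y$ and of $(y-1)$ reorganizes the target into the purely combinatorial identity
\[
\sum_{a=0}^{d}\binom{M}{a}\binom{d-M}{d-a}(1-y)^{a}=\sum_{k=0}^{d}\binom{M}{k}(-y)^{k}.
\]
I would prove this by generating functions in an auxiliary variable $x$ tracking $d$. Using $\sum_{b\ge 0}\binom{a-M+b}{b}x^{b}=(1-x)^{-(a-M+1)}$ together with the binomial theorem, the left side sums to $(1-x)^{M-1}\bigl(1+\tfrac{(1-y)x}{1-x}\bigr)^{M}=\frac{(1-yx)^{M}}{1-x}$, while the right side, being a cumulative sum, has generating function $\frac{1}{1-x}\sum_{k}\binom{M}{k}(-yx)^{k}=\frac{(1-yx)^{M}}{1-x}$ as well; equality of the two power series yields the identity for every $d$.

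The main obstacle I anticipate is the bookkeeping in the order-$(d+1)$ residue at $t=1/y$: extracting $[u^{d}]g(1/y+u)$ and tracking the competing signs $(-1)^{M}$, $(-1)^{d}$, and the powers of $(y-1)$ versus $(1-y)$, so that the resulting double sum reorganizes cleanly into the stated binomial identity. Once that identity is isolated, the generating-function argument is routine, and the two elementary residues require nothing beyond a change of variables.
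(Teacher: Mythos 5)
Your proposal is correct and follows essentially the same route as the paper's proof: both invoke the residue theorem on $\mathbb{P}^1$, compute the residue at infinity via $t=1/s$ (which cancels the $1$), expand the order-$(d+1)$ residue at $t=1/y$, and arrive at the identical binomial identity $\sum_{a=0}^{d}\binom{M}{a}\binom{d-M}{d-a}(1-y)^{a}=\sum_{k=0}^{d}\binom{M}{k}(-y)^{k}$ with $M=(N+1)d-\chi$. The only divergence is the last step, where the paper verifies this identity by downward induction using Pascal's rule, whereas you show both sides have generating function $\frac{(1-yx)^{M}}{1-x}$ in an auxiliary variable $x$ --- an equally valid, and arguably cleaner, finish.
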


\proof Since $\chi\geq Nd$, the form $\omega$ has poles at worst at $t=0$, $t=\infty$ and $t=\frac{1}{y}.$ By the residue theorem, we have $$\text{Res}_{t=0}\, \omega = - \text{Res}_{t=\infty} \,\omega -\text{Res}_{t=1/y} \,\omega.$$ Changing variables $t=\frac{1}{s}$, we compute $$\text{Res}_{t=\infty}\, \omega = -\text{Res}_{s=0}\, (s-1)^{\chi-Nd}(s-y)^{-d-1}\frac{ds}{s}=(-1)^{\chi-(N+1)d}y^{-d-1} .$$
Similarly, changing variables $t=\frac{1-s}{y}$, we find \begin{align*}\text{Res}_{t=\frac{1}{y}}\, \omega &= -\text{Res}_{s=0} \,(1-s)^{-\chi+(N+1)d} (s+y-1)^{\chi-Nd} y^{-d-1} \frac{ds}{s^{d+1}}\\&=-y^{-d-1} \left[s^{d}\right] (1-s)^{-\chi+(N+1)d} (s+y-1)^{\chi-Nd}\\ &=-y^{-d-1} \sum_{k=0}^{d} (-1)^k\binom {-\chi+(N+1)d}{k}\binom{\chi-Nd}{d-k} (y-1)^{\chi-Nd-d+k}.\end{align*} Collecting terms, we obtain 
\begin{align*}1+(-1)^{(N-1)d+\chi}y^{d+1}\text{Res}_{t=0}\, \omega&=\sum_{k=0}^{d} \binom {-\chi+(N+1)d}{k}\binom{\chi-Nd}{d-k}(1-y)^{\chi-Nd-d+k}\\&=\sum_{k=0}^{d}\binom{-\chi+(N+1)d}{k}\frac{(-y)^k}{(1-y)^{(N+1)d-\chi}}.\end{align*}

To justify the last equality, we write $u=-\chi+(N+1)d$ and show more generally 
$$\sum_{k=0}^{d} \binom {u}{k}\binom{-u+d}{d-k} (1-y)^{k}=\sum_{k=0}^{d}\binom{u}{k}{(-y)^k}.$$ This follows by induction on $d$. Indeed, write $\mathsf {L}_d$ for the left hand side. Using Pascal's identity and then rewriting the binomials, we obtain \begin{align*}\mathsf{L}_{d+1}-\mathsf{L}_{d}&=\sum_{k=0}^{d+1} \binom{u}{k}\left(\binom{-u+d+1}{d+1-k}-\binom{-u+d}{d-k}\right)(1-y)^{k}=\sum_{k=0}^{d+1} \binom{u}{k} \binom{-u+d}{d+1-k} (1-y)^k\\&=\sum_{k=0}^{d+1}\binom{u}{d+1}\binom{d+1}{k}(-1)^{d-k+1}(1-y)^{k}=\binom{u}{d+1}(-y)^{d+1}.\end{align*} The proof follows immediately from here. 
\qed

\subsection{Universal functions} \label{univf} Over a smooth projective curve $C$ of arbitrary genus, let $$\mathsf{W}=\sum_{d=0}^{\infty} q^d \chi\left(\quot_d(E), \mathsf{Sym}_y L^{[d]}\right).$$ The arguments in Section \ref{arb} exhibit $\mathsf W$ as a product of universal series \footnote{Strictly speaking, we only explained the factorization $\mathsf W=\mathsf A_1^{\chi(\mathcal O_C)}\cdot \mathsf B_1^{\deg E}\cdot \mathsf B_2^{\deg L}$ in terms of $3$ universal series. An argument of \cite {Sta2} shows that only $2$ series are needed. Indeed, tensorization by a line bundle $M\to C$ gives an isomorphism $\quot_d(E)\simeq \quot_d(E\otimes M)$ in such a fashion that $L^{[d]}$ gets identified with $\left(L\otimes M^{-1}\right)^{[d]}$. On the level of generating series this implies $\mathsf B_1^{N}=\mathsf B_2$, which then yields the result with $\mathsf A=\mathsf A_1\cdot \mathsf B_1^{-N},$ $\mathsf B=\mathsf B_1$.}\begin{equation}\label{wab}\mathsf W=\mathsf A^{\chi(\mathcal O_C)} \cdot \mathsf B^{\chi(E\otimes L)}.\end{equation} In principle Theorem \ref{t10} determines both series $\mathsf A, \mathsf B$ from the genus $0$ answer. Theorem \ref{t5} asserts that more precisely we have $$\mathsf B=f\left(\frac{qy}{(1-y)^{N+1}}\right)$$ where $f(z)$ is the solution to the equation $$f(z)^N-f(z)^{N+1}+z=0,\,\quad f(0)=1.$$
{\it Proof of Theorem \ref{t5}.} The function $f$ is most conveniently expressed in terms of a change of variables. We have $$f(z)=\frac{1}{1+t}\quad  \text{ for } z=-\frac{t}{(1+t)^{N+1}}.$$ 

We record the one-variable version of the general Lagrange-B\"urmann formula \eqref{formlb}. Assuming $\Phi(0)\neq 0$, for the change of variables $z=\frac{t}{\Phi(t)}$,
 the following
general identity holds \begin{equation}\label{lb}\sum_{d=0}^{\infty} z^d\cdot \left(\left[t^d\right] \Phi(t)^{d}\cdot \Psi(t)\right)=\frac{\Psi(t)}{\Phi(t)}
 \cdot \frac{dt}{dz}\, .\end{equation}

We introduce two functions which will be useful in the argument. Write \begin{equation}\label{fchidef}\mathsf F_{\chi}(z)=\sum_{d=0}^{\infty} z^d \binom{-\chi+(N+1)d}{d}\implies F_\chi(z)=\sum_{d=0}^{\infty} z^d \left(\left[t^d\right] (1+t)^{-\chi+(N+1)d}\right).\end{equation} An immediate application of \eqref{lb} yields \begin{equation}\label{fchi}\mathsf F_{\chi}(z)=\frac{(1+t)^{-\chi+1}}{1-Nt} \quad \text {for }z=\frac{t}{(1+t)^{N+1}}.\end{equation} Setting $\chi=0$ and integrating, we also obtain the expression \begin{equation}\label{gg}\mathsf G(z)=\sum_{d=1}^{\infty} z^{d}\cdot \frac{N}{d} \binom{(N+1)(d-1)}{d-1}=1-\frac{1}{(1+t)^N},\end{equation} for the same change of variables. With this understood, we note that for the function $f$ in the theorem, we have $$f(-z)^N= 1- \mathsf G(z).$$ The statement to be proven thus becomes $$\mathsf B^{N} = 1- \mathsf G\left(-\frac{qy}{(1-y)^{N+1}}\right)$$ or equivalently \begin{equation}\label{bb}\mathsf B^N=1+\sum_{d=1}^{\infty} (-1)^{d+1} \frac{N}{d} \cdot \binom{(N+1)(d-1)}{d-1}\cdot \left(\frac{qy}{(1-y)^{N+1}}\right)^{d}.\end{equation}

Turning to the generating series \eqref{wab}, we specialize to genus $0$ and we keep track on the dependence on $\deg L=\ell$ in the notation, so that $$\mathsf W_\ell=\sum_{d=0}^{\infty} q^d\chi\left(\quot_d(E), \mathsf{Sym}_y L^{[d]}\right)=\mathsf A^{-1}\cdot \mathsf B^{\chi}.$$ As usual, $\chi=\chi(E\otimes L)$. This yields \begin{equation}\label{identity}\mathsf W_{\ell+1}=\mathsf W_{\ell} \cdot \mathsf B^{N}.\end{equation}
By Theorem \ref{t10}, we have $$\mathsf W_\ell=\sum_{d=0}^{\infty} \mathsf {c}_d(\chi) \cdot q^d, \quad \mathsf W_{\ell+1}=\sum_{d=0}^{\infty} \mathsf {c}_d(\chi+N) \cdot q^d,$$ where for simplicity, we wrote \begin{equation}\label{ad} \mathsf{c}_{d}(\chi)= \sum_{k=0}^{d}\binom{-\chi+d(N+1)}{k}\frac{(-y)^k}{(1-y)^{d(N+1)}}.\end{equation} 
Examining the coefficient of $q^d$ in the identity \eqref{identity}, it follows that in order to confirm \eqref{bb} it suffices to prove $$\mathsf{c}_{d}(\chi+N)=\mathsf{c}_d(\chi)+\sum_{\ell=1}^{d} \mathsf{c}_{d-\ell}(\chi)\cdot (-1)^{\ell+1}\frac{N}{\ell} \binom{(N+1)(\ell-1)}{\ell-1}\left(\frac{y}{(1-y)^{N+1}}\right)^{\ell}.$$

We use the defining expressions \eqref{ad} to verify this equality. After multiplying by $(1-y)^{d(N+1)}$ and extracting the coefficient of $y^{k}$ on both sides, we need to show that for $0\leq k\leq d$, we have \begin{equation}\label{down}\binom{-\chi-N+d(N+1)}{k}=\binom{-\chi+d(N+1)}{k}-\sum_{\ell=1}^k \frac{N}{\ell} \binom{(N+1)(\ell-1)}{\ell-1} \binom{-\chi+(d-\ell)(N+1)}{k-\ell}.\end{equation} Using Pascal's identity, it is easy to see that if \eqref{down} holds for $k$ and all $\chi$, then it also holds for $k-1$ and all $\chi$. Thus, by downward induction it suffices to assume $k=d$. In this case, we seek to show $$\sum_{\ell=1}^d \frac{N}{\ell} \binom{(N+1)(\ell-1)}{\ell-1}\binom{-\chi+(d-\ell)(N+1)}{d-\ell}=\binom{-\chi+d(N+1)}{d}-\binom{-\chi-N+d(N+1)}{d}.$$ This is indeed correct. Recalling \eqref{fchidef} and \eqref{gg}, we see that the two sides equal the $z^d$-coefficient in the identity 
$$\mathsf G(z)\cdot \mathsf F_{\chi}(z)=\mathsf F_{\chi}(z)-\mathsf F_{\chi+N}(z).$$ The latter equality is immediately justified using the explicit formulas \eqref{fchi} and \eqref{gg} after changing variables from $z$ to $t$ as above.

	\section{Higher rank quotients}\label{s4} It is natural to wonder whether the formulas proven in Section \ref{section2} extend to Quot schemes parametrizing quotients of rank $r>0$. We write $\quot_d(E, r)$ for the corresponding Quot scheme. We restrict to the case $C=\mathbb P^1$, and $E$ a trivial rank $N$ vector bundle. In this case, $\quot_d(E, r)$ is smooth. 
	
	Theorem \ref{t3} gives an expresion for the Euler characteristics
	\begin{align*}
			\chi(\quot_d(E, r),\wedge_y L^{[d]})
	\end{align*}
in terms of the roots $z_1,\dots,z_N$ of the equation $$(z-1)^N-q(z+y)z^{r-1}=0.$$ Unlike our previous computations, this result does not imply the answer in higher genus, since the universality arguments in Section \ref{arb} fail in this case. 
\vskip.1in
\noindent{\it Proof of Theorem $3$.} We follow the same steps as in the proof of Theorem \ref{t1}. However, some modifications are necessary. Write $$s=N-r$$ for the rank of the subbundles. We use the torus action on $\quot_d(E, r)$ coming from the torus action on $E=\mathbb C^N\otimes \mathcal O_{\mathbb P^1}$ with weights $-w_1,\ldots, -w_N$. The fixed loci are parameterized by pairs $(\vec{d}, I)$, where $\vec d=(d_1, \ldots, d_s)$ with $|\vec{d}|=d_{1} + \cdots + d_{s}=d$, and $I\subset [N]$ is a subset of cardinality $s$. The fixed loci are products of symmetric powers of $\mathbb P^1$: \[\fix_{\vec{d}, I}=\mathbb{P}^{d_1}\times\cdots\times \mathbb{P}^{d_s}.\] Each such product appears $\binom{N}{s}$ times corresponding to the choice $I$ of $s$ summands of the trivial bundle $E$ into which the kernel injects $$0\to S\to \bigoplus_{i\in I} \mathcal O_{\mathbb P^1}\to \bigoplus_{i\in [N]} \mathcal O_{\mathbb P^1}=E.$$ This changes slightly the expressions for the normal bundles $\textrm{N}_{\vec d, I}$. With the same notation as in Section \ref{torusact}, we find that \eqref{movingpart} becomes \begin{align*}
		\textrm{N}_{\vec d, I}= \bigoplus_{i\in I,\,\,j\in [N],\,\, i\ne j}\pi_*\left(\mathcal{K}_i^{\vee}\right)-\bigoplus_{i,j\in I,\,\,i\ne j}\pi_*\left(\mathcal{K}^{\vee}_i \otimes\mathcal{K}_j\right) .
	\end{align*} Compared to prior expressions, the range of the indices $i, j$ has changed. 
	The above sheaves carry weights $w_{i}-w_j$. By direct calculation, we obtain the analogue of equation \eqref{toddeufinal}
	$$	\frac{\td (\quot_d)}{{e}_{\mathbb{C}^*}(\textrm{N}_{\vec d, I})}=(-1)^{(s-1)d}\prod_{i\in I}\bigg(\frac{h_iz_i^{N-s}}{R(z_i)} \bigg)^{d_i+1} z_i^{ d+1}
		\prod_{i,j\in I,\, i\ne j}(z_i-z_j),$$ where $z_i$, $\alpha_i$ and $R(z)=\prod_{j=1}^{N} (z-\alpha_j)$ were defined before. Similarly, the analogue of Lemma \ref{chern_classes} becomes \begin{align*}
				\ch(\wedge_yL^{[d]})\bigg|_{\fix_{\vec{d},I}}=\prod_{j\in[N]}\bigg(1+\frac{y}{\alpha_j} \bigg)^{\ell+1}\prod_{i\in I}\bigg(1+\frac{y}{z_i} \bigg)^{d_i-(\ell+1)}. 
		\end{align*}
We substitute these expressions into Hirzebruch-Riemann-Roch and Atiyah-Bott localization: 
\begin{align*}
	&\chi(\quot_d,\wedge_y L^{[d]}) = \sum_{\vec{d}, I}\int_{\fix_{\vec{d},I }}
	\ch(\wedge_y L^{[d]})\,\frac{\td (\quot_d)}{\mathsf {e}_{\mathbb{C}^*}(\textrm{N}_{\vec d, I})}\,\bigg{|}_{\fix_{\vec{d},I }}.\end{align*}
After substitution, we invoke Lagrange-B\"urmann formula, but the change of variables takes a slightly different form for $r>0$. If  \[P(z)=R(z)-q(z+y)z^{r-1}\]
and $z_i$ are the roots of the above equation, the reader can verify by direct calculation that we arrive at the following expression
$$\left[q^{d}\right] (-1)^{(s-1)d} \,\frac{\prod_{j=1}^{N} (\alpha_j+y)^{\ell+1}} {\prod_{j=1}^{N} \alpha_j^{\ell+1}}\, \cdot\sum_{\vec{d}, I}\,\prod_{i\in I}(z_i+y)^{-(\ell+1)}\frac{z_i^{d+\ell+N-s+1}}{P'(z_i)}\prod_{i, j\in I, \,i\ne j}(z_i-z_j)\bigg{|}_{\epsilon=0}.$$
Since $P(-y)=R(-y)$, it follows that $$\prod_{j=1}^{N} (\alpha_j+y)=\prod_{j=1}^{N} (z_j+y),$$ and thus the previous expression can be further simplified to \begin{equation}\label{prelap}
\left[q^{d}\right]\frac{(-1)^{(s-1)d}}{\prod_{j=1}^{N} \,\,\alpha_j^{\ell+1}}\,\,\sum_{\vec{d}, I} \prod_{j\not \in I}(z_j+y)^{\ell+1}\,\,	\prod_{i\in I}\frac{z_i^{d+\ell+N-s+1}}{P'(z_i)}\prod_{i, j\in I, \,i\ne j}(z_i-z_j)\bigg{|}_{\epsilon=0}.
\end{equation}

Compared to Theorem \ref{t1}, for $r>0$ we do not have an additional root, so we finish the argument in a different way. The key observation is that we can rewrite \eqref{prelap} as the $q^d$-coefficient in the quotient of two $N\times N$ determinants
\begin{equation}\label{debig}
\frac{(-1)^{(s-1)d}} {\prod_{j=1}^{N} \alpha_j^{\ell+1}}\frac{1}{\det (z_i^{N-j})}
\left|\begin{array}{cccc}
z_1^{d+\ell+N} & z_2^{d+\ell+N} & \cdots & z_N^{d+\ell+N}\\
		z_1^{d+\ell+N-1}&z_2^{d+\ell+N-1}&\cdots&z_N^{d+\ell+N-1} \\
	\vdots&\vdots & \cdots & \vdots \\
		z_1^{d+\ell+N-s+1}&z_2^{d+\ell+N-s+1}&\cdots&z_N^{d+\ell+N-s+1} \\	
		\\	
		z_1^{N-s-1}(z_1+y)^{\ell+1}&z_2^{N-s-1}(z_2+y)^{\ell+1}&\cdots &z_N^{N-s-1}(z_N+y)^{\ell+1}\\
		\vdots&\vdots & \cdots & \vdots \\
		(z_1+y)^{\ell+1}		&(z_2+y)^{\ell+1}&\cdots &(z_N+y)^{\ell+1}
	\end{array}
	\right|.
\end{equation} This agrees with the prior expression \eqref{prelap}. To justify this assertion, we use generalized Laplace expansion of the determinant in the numerator along the last $N-s$ rows simultaneously. Picking $s$ columns labeled by the index set $I$, the corresponding minors of \eqref{debig} (evaluated using Vandermonde determinants) contribute exactly the $I$-term of \eqref{prelap}. 

Since the above expression is a symmetric function in the $z_i$'s, it can also be expressed in terms of the elementary symmetric functions, which are in turn polynomials in $\alpha_j$. Setting $\epsilon=0$ corresponds to setting $\alpha_j=1$ for all $j$, or in turn working with the roots of $$P(z)=(z-1)^N-q(z+y)z^{r-1}=0.$$ This completes the argument. \qed
\vskip.1in
\begin{corollary}\label{thm:high_rank_det}
		As in Theorem \ref{t3}, let $C=\mathbb P^1$, $E$ is trivial of rank $N$, $\deg L=\ell\geq -d-1$. Then
		\begin{align*}
				\chi\left(\quot_d(E, r) ,\det L^{[d]}\right) =(-1)^{(N-r-1)d}\left[q^{d}\right]s_{\lambda}(z_1,z_2,\dots z_N)
		\end{align*}
	for the partition $\lambda=((d+\ell+1)^{N-r})$. The $z_i$'s are the distinct roots of the equation $(z-1)^N-qz^{r-1}=0.$ 
	\end{corollary}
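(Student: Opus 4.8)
The plan is to specialize Theorem \ref{t3} to the line bundle $\det L^{[d]} = \wedge^d L^{[d]}$, which corresponds to extracting the top exterior power, i.e.\ the coefficient of $y^d$ in $\wedge_y L^{[d]}$. Concretely, I would start from the determinant formula of Theorem \ref{t3}, namely
\begin{equation*}
\chi(\quot_d(E,r),\wedge_y L^{[d]})=(-1)^{(N-r-1)d}\left[q^d\right]\frac{\det(f_i(z_j))}{\det(z_i^{N-j})},
\end{equation*}
and observe that $\det L^{[d]}$ is recovered by taking $\left[y^d\right]$ of this expression. The functions $f_i(z)=z^{N-i}(z+y)^{\ell+1}$ for the last $r$ rows are the only ones carrying $y$-dependence, while the first $N-r=s$ rows are pure powers of $z$ independent of $y$. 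So the task reduces to understanding how $\left[y^d\right]$ interacts with the Vandermonde-normalized determinant.

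The key step is to track the $y$-dependence through the $r$ rows of the form $z^{N-i}(z+y)^{\ell+1}$. Since the total $\wedge_y L^{[d]}$ has rank $d$, the top piece $y^d \det L^{[d]}$ should pick out a single clean term, and I expect the roots to degenerate: setting $y=0$ in the defining equation $(z-1)^N-q(z+y)z^{r-1}=0$ gives exactly $(z-1)^N-qz^r=0$, i.e.\ $(z-1)^N-qz^{r-1}\cdot z=0$. However the corollary states the roots of $(z-1)^N-qz^{r-1}=0$, so the matching is slightly subtle; I would need to carefully reconcile the $y\to 0$ limit of the root equation against the claimed equation, likely by re-deriving the localization directly for $\det L^{[d]}$ rather than passing through the limit of the full $\wedge_y$ formula. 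The cleaner route is to redo the computation: replace $\ch(\wedge_y L^{[d]})$ by $\ch(\det L^{[d]})$, whose restriction to the fixed loci is $\prod_{i\in I}(z_i/\alpha_i)^{d_i}\cdot \prod_{j}(z_j/\alpha_j)^{\ell+1}$ up to the relevant normalization, so that no $(z+y)$ factors appear and the root equation naturally becomes $(z-1)^N-qz^{r-1}=0$ after setting $\alpha_j=1$.

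Following the same Lagrange--B\"urmann and Laplace-expansion steps as in the proof of Theorem \ref{t3}, but now with the last $r$ rows replaced by pure powers $z_j^{\,?}$, the numerator determinant should collapse to a genuine Schur polynomial. Specifically, with the first $s=N-r$ rows contributing exponents $d+\ell+N, \ldots, d+\ell+N-s+1$ and (after the $\det$ specialization eliminates the $(z+y)^{\ell+1}$ rows in favor of lower powers) the remaining rows contributing the standard staircase, the exponent sequence minus the Vandermonde staircase $(N-1,N-2,\ldots,0)$ yields the partition $\lambda=((d+\ell+1)^{N-r})$. Writing $\det(z_i^{\mu_j})/\det(z_i^{N-j})=s_\lambda$ with $\lambda_j=\mu_j-(N-j)$ is exactly the bialternant definition of the Schur polynomial, so this identification is the crux. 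The main obstacle I anticipate is \emph{bookkeeping of the exponents and signs}: correctly matching the degenerated root equation, confirming that the rectangular shape $((d+\ell+1)^{N-r})$ emerges (rather than some other partition), and verifying that the sign $(-1)^{(N-r-1)d}$ survives unchanged. These are routine but error-prone, and I would double-check the shape against a small case such as $r=N-1$ (where $\lambda$ is a single column) or by comparing with the $y^d$-coefficient extracted from Theorem \ref{t3} directly.
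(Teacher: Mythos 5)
Your ``cleaner route'' is exactly the paper's own proof: the paper also declines to extract the top $y$-coefficient from the determinant \eqref{debig} (remarking that this ``requires some care''), and instead re-runs the localization of Theorem \ref{t3} with $\ch(\det L^{[d]})$ in place of $\ch(\wedge_y L^{[d]})$, obtaining a new change of variables, then the same Laplace-expansion step, and finally the bialternant identification $\det(z_i^{\mu_j})/\det(z_i^{N-j})=s_\lambda$ with the rectangular shape $\lambda=((d+\ell+1)^{N-r})$. Your endgame (exponent staircase subtraction, identification of the rectangular partition, checking signs) is the right one and matches the paper.

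However, two concrete slips need fixing. First, on $\quot_d(E,r)$ with $r>0$ the bundle $L^{[d]}=\pi_*(p^*L\otimes\mathcal Q)$ has rank $\chi(Q\otimes L)=d+r(\ell+1)$, not $d$; so $\det L^{[d]}$ is \emph{not} $\wedge^d L^{[d]}$, and your route~1 (taking $[y^d]$ of Theorem \ref{t3}) is a non-starter for a reason more basic than the root-equation mismatch you noticed. Second, and more seriously for route~2, your guessed fixed-locus restriction of $\ch(\det L^{[d]})$ is wrong: from $\mathcal Q|_{\fix_{\vec d,I}}=\bigoplus_{i\in I}\mathcal T_i\oplus\bigoplus_{j\notin I}\mathcal O$ and $\mathcal T_i=\mathcal O-\mathcal K_i$ in $K$-theory, one gets
\begin{equation*}
\ch\left(\det L^{[d]}\right)\Big|_{\fix_{\vec d,I}}=\prod_{j\in[N]}\alpha_j^{-(\ell+1)}\prod_{i\in I}z_i^{\,\ell+1-d_i},
\end{equation*}
i.e.\ the $d_i$-dependent factor is $z_i^{-d_i}$, not $z_i^{+d_i}$ as in your formula. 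This sign in the exponent is precisely what determines the new Lagrange--B\"urmann substitution: the factor $z_i^{-1}$ per unit of $d_i$ combines with the Todd/Euler factor $h_iz_i^{N-s}/R(z_i)$ to give $q\,z_i^{r-1}=R(z_i)$, hence $(z-1)^N-qz^{r-1}=0$ after setting $\alpha_j=1$. With your version of the restriction the change of variables would come out as $(z-1)^N-qz^{r+1}=0$, contradicting the equation you (correctly) want; so the root equation does not emerge ``naturally'' independent of this computation --- it hinges on it. Once the restriction formula is corrected, the rest of your plan goes through verbatim as in the paper.
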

	
\proof The statement follows by running the argument above for the determinant $\det L^{[d]}$ instead of $\wedge_y L^{[d]}$. (We prefer this route since extracting the top $y$-coefficient in the determinant \eqref{debig} requires some care.) The reader can verify that this results in the following two changes:
\begin{itemize}
\item [(i)] the new localization sum requires a new change of variables, so in particular, $z_1, \ldots, z_N$ are roots of $R(z)-z^{r-1}q=0$; 
\item [(ii)] the analogue of expression \eqref{prelap} is
$$\left[q^{d}\right]\frac{(-1)^{(s-1)d}}{\prod_{j=1}^{N} \alpha_j^{\ell+1}}\,\sum_{\vec{d}, I} \,\,\prod_{i\in I}\frac{z_i^{d+\ell+N-s+1}}{P'(z_i)}\,\,\prod_{i, j\in I, \,i\ne j}(z_i-z_j)\bigg{|}_{\epsilon=0}.$$
\end{itemize}
\noindent As a result, the counterpart of \eqref{debig} takes the form 
$$\left[\epsilon^0q^d\right]\frac{(-1)^{(s-1)d}} {\prod_{j=1}^{N} \alpha_j^{\ell+1}}\frac{1}{\det (z_i^{N-j})}
\left|\begin{array}{cccc}
z_1^{d+\ell+N} & z_2^{d+\ell+N} & \cdots & z_N^{d+\ell+N}\\
		z_1^{d+\ell+N-1}&z_2^{d+\ell+N-1}&\cdots&z_N^{d+\ell+N-1} \\
	\vdots&\vdots & \cdots & \vdots \\
		z_1^{d+\ell+N-s+1}&z_2^{d+\ell+N+1-s}&\cdots&z_N^{d+\ell+N-s+1} \\	
		\\	
		z_1^{N-s-1}&z_2^{N-s-1}&\cdots &z_N^{N-s-1}\\
		\vdots&\vdots & \cdots & \vdots \\
		1	&1&\cdots &1
	\end{array}
	\right|.$$
The proof is completed setting $\alpha_j=1$, and noting that when $d+\ell+1\geq 0$, the last expression is exactly the Schur polynomial of the partition $\lambda=((d+\ell+1)^s)$. \qed

\begin{example} In the simplest case $d=0$, the Quot scheme is the Grassmannian $G=G(s, N)$ and $\det L^{[d]}=\mathcal O_G(\ell+1)$. In the corollary, since we are extracting the coefficient of $q^0$, we can set $z_1=z_2=\ldots=z_N=1$ to obtain the correct identity $$\chi(G, \mathcal O_{G}(\ell+1))=s_{\lambda}(1, \ldots, 1), \quad \lambda=((\ell+1)^s).$$ 
\end{example}

\begin{example} \label{ex16} An interesting specialization of Corollary \ref{thm:high_rank_det} arises for $\ell=0$. We show 
		\begin{align*}
		\chi\left(\quot_d(E, r) ,\det \mathcal O^{[d]}\right) =\binom{N}{r+d}.
		\end{align*}
We have \[\chi(\quot_d, \det\mathcal O^{[d]})=\left[q^d\right](-1)^{(s-1)d}s_\lambda(z_1,\dots,z_N) \]
where $\lambda=((d+1)^s)$. The elementary symmetric functions in $z_1,\dots z_N$ are\begin{align*}
	e_j=\begin{cases}
		\binom{N}{j} & j\neq s+1\\
		\binom{N}{j}+(-1)^{s}q& j= s+1
	\end{cases}.
\end{align*}
Using Jacobi-Trudi, we have
\begin{align*}
	s_\lambda(z_1,\dots,z_N)=\begin{vmatrix}
		e_{s}& e_{s+1}&e_{s+2}&\cdots& e_{s+d-1}&e_{s+d}\\
		e_{s-1}&e_{s}&e_{s+1}&\cdots &e_{s+d-2}&e_{s+d-1}\\
		\vdots&\vdots&\vdots&\cdots &\vdots &\vdots\\
		e_{s-d+1}&e_{s-d+2}&e_{s-d+3}&\cdots& e_s&e_{s+1}\\
		e_{s-d}&e_{s-d+1}&e_{s-d+2}&\cdots& e_{s-1} &e_s\\
	\end{vmatrix}.
\end{align*}
In the $(d+1)\times (d+1)$ determinant, the only term yielding the power $q^d$ is $(-1)^de_{s+1}^de_{s-d}$, coming from the lower left corner $e_{s-d}$ and the terms $e_{s+1}$ above the diagonal. To conclude, it remains to note that $$\left[q^{d}\right]e_{s+1}^{d}e_{s-d}=(-1)^{sd}\binom{N}{s-d}.$$ 
\end{example}

\begin{example} Assume $d>s(\ell+1)$. The Schur polynomial $s_{\lambda}$ has weighted degree $|\lambda|=s(d+\ell+1)<(s+1)d$ in the elementary symmetric functions $e_i$, where we set $\deg e_i=i$. 
We noted in Example \ref{ex16} that only $e_{s+1}$ contains a linear $q$-term. By degree reasons, $e_{s+1}$ appears in $s_{\lambda}$ with exponent $<d$. Thus, in this case the $q^d$-coefficient vanishes, and 
$$ \chi\left(\quot_d(E, r) ,\det L^{[d]}\right)=0.$$
\end{example} 

\begin{example} Assume $d=s(\ell+1)$, so that $d+\ell+1=(s+1)(\ell+1)$ and $|\lambda|=d(s+1)$ for $\lambda=((d+\ell+1)^s)$. With these numerics, we claim that \begin{equation} \label{sces} s_{\lambda}=(-1)^{sd} e_{s+1}^{d} + \text { lower order terms in } e_{s+1}.\end{equation} Using that the only nonzero $q$-contribution in $e_j(z_1, \ldots, z_N)$ is given by $$\left[q\right] e_{s+1}(z_1, \ldots, z_{N})=(-1)^s,$$ we obtain 
$$\left[q^d\right] s_{\lambda}(z_1, \ldots, z_N)=1,\text{ and thus } \chi\left(\quot_d(E, r) ,\det L^{[d]}\right)=1.$$ 

To justify \eqref{sces}, we let $$(x_1, \ldots, x_N)=(1, \zeta, \zeta^2, \ldots, \zeta^s, 0, \ldots, 0),$$ where $\zeta$ is a primitive $(s+1)$-root of $1$. In this case, we have $$e_{s+1}(x_1, \ldots, x_N)=(-1)^s,\quad e_j(x_1, \ldots, x_N)=0 \text { for }j\neq 0,\,\, j\neq s+1.$$ Thus, to confirm \eqref{sces} it remains to show that 
\begin{equation}\label{53}s_{\lambda}(x_1, \ldots, x_N)=1.\end{equation} This follows from the (first) Jacobi-Trudi identity $$s_{\lambda}=\begin{vmatrix}
		h_{(s+1)(\ell+1)}& h_{(s+1)(\ell+1)+1}&\cdots &h_{(s+1)(\ell+1)+(s-1)}\\
		h_{(s+1)(\ell+1)-1}&h_{(s+1)(\ell+1)}&\cdots &h_{(s+1)(\ell+1)+(s-2)}\\
		\vdots&\vdots&\cdots &\vdots\\
	
		h_{(s+1)(\ell+1)-(s-2)}&h_{(s+1)(\ell+1)-(s-1)}& \cdots& h_{(s+1)(\ell+1)+1}\\
		h_{(s+1)(\ell+1)-(s-1)}&h_{(s+1)(\ell+1)-(s-2)}& \cdots& h_{(s+1)(\ell+1)}\\
	\end{vmatrix},$$ where $h_j$ are the homogeneous symmetric functions. In our case, we have $$h_j(x_1, \ldots, x_N)=1 \text{ if } j\equiv 0 \mod s+1, \quad h_j(x_1, \ldots, x_N)=0 \text{ otherwise}.$$ Hence the above matrix evaluated at $(x_1, \ldots, x_N)$ is the identity, yielding \eqref{53}. 

\end{example} 
	\section{Further questions} 
	\subsection{Cohomology groups} It is natural to inquire whether Theorem \ref{t1} can be refined to yield information about all cohomology groups of the tautological bundles $\wedge^k L^{[d]}.$ We ask:\vskip.05in
	
	\noindent
	\begin{question} Is it true that
\begin{equation}\label{spec} H^{\bullet} \left(\mathsf {Quot}_{d}(E), \wedge^{k} L^{[d]}\right)=\wedge^{k} H^{\bullet}(E\otimes L)\otimes \mathsf{Sym}^{d-k} H^{\bullet} (\mathcal O_C)?\end{equation}
\end{question}

To explain the notation, if $V^{\bullet}=V_0\oplus V_1$ is a $\mathbb Z_2$-graded vector space, we define the graded vector spaces
$$\wedge^{k} V^{\bullet}=\bigoplus_{i+j=k} \wedge^i V_0\otimes \mathsf{Sym}^{j} V_1,\quad \mathsf {Sym}^{k} V^{\bullet}=\bigoplus_{i+j=k} \mathsf{Sym}^i V_0\otimes \wedge^{j} V_1$$ where the summands have degree $j$. With the convention $$\dim\, W^{\bullet}=\sum (-1)^j \dim\, W^j$$ for the superdimension of a graded vector space, the usual formulas hold true $$\dim\, \wedge^kV^{\bullet}=\binom{\dim V^{\bullet}}{k},\quad \dim\, \mathsf{Sym}^k V^{\bullet}=(-1)^k\binom{-\dim V^{\bullet}}{k}.$$ 
Thus, taking dimensions in \eqref{spec}, we immediately match the expressions in Theorem \ref{t1}. There is also a natural analogue for Theorem \ref{t2}. The study of these questions may require understanding the derived category of $\mathsf {Quot}_d(E)$. 
\vskip.1in

\noindent {\bf Evidence.} Formula \eqref{spec} is true in the following cases
\begin{itemize}
\item [(i)] over the symmetric product of a curve, that is for $\text{rank }E=1$. This was shown in \cite[Section 3]{K2} using the derived category;
\item [(ii)] for $d=1$ so that $\mathsf {Quot}_1(E)=\mathbb P(E),$ the projective bundle of length $1$ quotients of $E$;
\item [(iii)] for $k=0$, the formula predicts the Hodge numbers $h^{p, 0} (\mathsf {Quot}_d(E))=\binom{g}{p}$ for $p\leq d$. This follows from \cite{BFP, R} which give the Hodge polynomials
$$\sum h^{p, q}(\mathsf {Quot}_d(E)) (-u)^p (-v)^q t^d=\prod_{i=0}^{\text{rk} (E)-1} \frac{(1-u^i v^{i+1} t)^{g}(1-u^{i+1}v^{i}t)^{g}}{(1-u^i v^i t)(1-u^{i+1}v^{i+1}t)}.$$
\end{itemize}

\subsection{Rationality} For any line bundles $L_1, \ldots, L_\ell\to C$ and integers $k_1, \ldots, k_{\ell}\geq 0$, the series of $K$-theoretic invariants $$\mathsf Z_{C, E}\left(L_1, \ldots, L_{\ell}\,|\,k_1, \ldots, k_{\ell}\right)=\sum_{d} q^d \chi\left(\quot_d(E), \wedge^{k_1} L_1^{[d]}\otimes \cdots \otimes \wedge^{k_{\ell}} L_{\ell}^{[d]}\right)$$ are given by rational functions with pole at $q=1$. This assertion was proved in \cite {AJLOP} in the context of punctual Quot schemes of surfaces, endowed with the virtual class, but the same argument applies here as well. (The argument proceeds by localization when $E$ is split, but this is sufficient in light of the universality statements of Section \ref{arb}.) Keeping with the theme of Section \ref{aws}, we note that a similar result also holds true for Hilbert schemes of points on surfaces (without virtual classes). This was conjectured in \cite{AJLOP} and proved in \cite {A2}. 

Theorem \ref{t1} gives the simple expression $$\mathsf Z_{C, E}(L\,|\,k)=\binom {\chi(E\otimes L)}{k} q^k \cdot (1-q)^{-\chi(\mathcal O_C)}.$$  
\begin{question}\label{expform} What is the structure of the rational functions $\mathsf Z_{C, E}(L_1, \ldots, L_{\ell}\,|\, k_1, \ldots, k_{\ell})$? Do they admit explicit formulas? \end{question}

It is natural to inquire whether the results proved here carry over to the Quot schemes $\mathsf {Quot}_d(E, r)$ parametrizing quotients of $E$ of any rank $r$ and degree $d$. The latter possess $2$-term perfect obstruction theories \cite {CFK, MO}. 

\begin{question}

Are the series $$\mathsf Z_{C, E}^{(r)} \left(L_1, \ldots, L_{\ell}\,|\,k_1, \ldots, k_{\ell}\right)=\sum_{d=0}^{\infty} q^{d} \chi^{\textrm{vir}} \left(\mathsf {Quot}_d(E, r), \wedge^{k_1} L_1^{[d]}\otimes \ldots \otimes \wedge^{k_{\ell}} L_{\ell}^{[d]}\right)$$ rational functions with pole only at $q=1$? Are there explicit expressions for  $\mathsf Z_{C, E}^{(r)} \left(L_1, \ldots, L_{\ell}\,|\,k_1, \ldots, k_{\ell}\right)$? 
\end{question}

\noindent Here, for a scheme $Y$ with a $2$-term perfect obstruction theory and virtual structure sheaf $\mathcal O_Y^{\text{vir}}$, and for $V\to Y$, we set $$\chi^{\textrm{vir}}(Y, V)=\chi(Y, V\otimes \mathcal O_Y^{\text{vir}}).$$

\subsection {Higher rank} Extending Theorems \ref{t1} to $K$-theory classes $V\to C$ of arbitrary rank is not immediate. In general, a change of variables is likely needed. 
\begin{question} Find a closed-form expression for the series $$\sum_{d=0}^{\infty} q^d \chi\left(\quot_d(E), \wedge_y V^{[d]}\right).$$ \end{question} 
\noindent This may potentially be used to address Question \ref{expform} as well. Theorem \ref{t5} partially addresses the case $V=-L$, for $L$ a line bundle.

Turning to higher rank quotients and Theorem \ref{t3}, we could ask for the arbitrary genus version:  
\begin{question} For line bundles $L\to C$, find a closed-form expression for $\chi^{\text{vir}}(\quot_d(E, r),\wedge_y L^{[d]}).$ 
\end{question}
\noindent For example, in genus $0$, for $E$ trivial of rank $N$, for rank $r>0$, $\deg L=\ell$, numerical experiments suggest that the answer stabilizes to
$$\chi\left(\quot_d(E,r), \wedge_y L^{[d]}\right)=(1+y)^{N(\ell+1)},$$ as soon as $d\geq (N-r)(\ell+1)$.  

Analogous questions can be asked about symmetric powers or other Schur functors as well.

\end{document}